\newtheorem{theorem}{Theorem}
\newtheorem{proposition}[theorem]{Proposition}
\newtheorem{lemma}[theorem]{Lemma}
\newtheorem{corollary}[theorem]{Corollary}
\newcommand{\reals}{\mathbb{R}}
\newcommand{\extreals}{\overline{\mathbb{R}}}
\newcommand{\posreals}{\reals_{> 0}}
\newcommand{\nnegreals}{\reals_{\geq 0}}
\newcommand{\nats}{\mathbb{N}}
\newcommand{\natz}{\mathbb{N}_{0}}
\newcommand{\genindica}[1]{\mathbf{1}_{#1}}
\newcommand{\indica}[1]{\mathbb{I}_{#1}}
\newcommand{\powset}[1]{\mathscr{P}(#1)}
\newcommand{\prev}{\mathrm{E}}
\newcommand{\dif}{\mathrm{d}}
\newcommand{\upprev}[1]{\overline{\mathrm{E}}_{#1}}
\newcommand{\upprevvovkk}{\overline{\mathrm{E}}_\mathrm{V}}
\newcommand{\lowprev}{\underline{\mathrm{E}}}
\newcommand{\lowprevvovkk}{\underline{\mathrm{E}}_\mathrm{V}}
\newcommand{\mupprev}{\overline{\mathrm{E}}_{\mathrm{meas}}}
\newcommand{\lupprev}[1]{\overline{\mathrm{Q}}_{#1}}
\newcommand{\extlupprev}[1]{\overline{\mathrm{Q}}{}_{#1}^{\hspace{0.6pt}\raisebox{1pt}{\scalebox{0.6}{\ensuremath \uparrow}}}}
\newcommand{\mupprevpreciseone}{\raisebox{0pt}[0.8em][0.6ex]{$\overline{\mathrm{E}}_{\raisebox{1.5pt}{\scriptsize $\mathrm{meas},p$}}^{\hspace*{1pt}\scalebox{0.7}{\raisebox{-1.5pt}{\ensuremath 1}}}$}}
\newcommand{\mupprevprecisetwo}{\raisebox{0pt}[0.8em][0.6ex]{$\overline{\mathrm{E}}_{\raisebox{1.5pt}{\scriptsize $\mathrm{meas},p$}}^{\hspace*{1pt}\scalebox{0.7}{\raisebox{-1.5pt}{\ensuremath 2}}}$}}
\newcommand{\sit}{x_{1:n}}
\newcommand{\tree}{\overline{\mathrm{Q}}}
\newcommand{\martingale}{\mathscr{M}}
\newcommand{\setofextsupmartb}{\overline{\mathbb{M}}_\mathrm{b}}
\newcommand{\situations}{\mathscr{X}^\ast}
\newcommand{\statespace}{\mathscr{X}}
\newcommand{\statespaceseq}[2]{\mathscr{X}_{#1:#2}}
\newcommand{\samplespace}{\Omega}
\newcommand{\setofgengambles}{\mathscr{L}}
\newcommand{\setofgenextvariables}{\overline{\mathscr{L}}}
\newcommand{\setofgenextvariablesb}{\overline{\mathscr{L}}_\mathrm{b}}
\newcommand{\setofupprev}{\overline{\mathbb{E}}}
\newcommand{\setofprev}{\mathbb{E}}
\newcommand{\setofgambles}{\mathbb{V}}
\newcommand{\setofextvariables}{\overline{\mathbb{V}}}
\newcommand{\setofextvariablesb}{\overline{\mathbb{V}}_{\mathrm{b}}}
\newcommand{\setoflimitsoffinmeasextb}{\overline{\mathbb{V}}_{\mathrm{b,lim}}}
\newcommand{\setoffingambles}{\mathbb{V}_{\mathrm{fin}}}
\title{A Particular Upper Expectation as Global Belief Model for Discrete-Time Finite-State Uncertain Processes}
\author{Natan T'Joens}
\ead{natan.tjoens@ugent.be}
\author{Jasper De Bock
}
\author{Gert de Cooman
}
\address{FLip, Ghent University, Technologiepark 125, 9052 Zwijnaarde, Belgium}
\begin{document}

\begin{abstract}
To model discrete-time finite-state uncertain processes, we argue for the use of a global belief model in the form of an upper expectation that is the most conservative one under a set of basic axioms. 
Our motivation for these axioms, which describe how local and global belief models should be related, is based on two possible interpretations for an upper expectation: a behavioural one similar to Walley's, and an interpretation in terms of upper envelopes of linear expectations.
We show that the most conservative upper expectation satisfying our axioms, that is, our model of choice, coincides with a particular version of the game-theoretic upper expectation introduced by Shafer and Vovk.
This has two important implications: it guarantees that there is a unique most conservative global belief model satisfying our axioms;
and it shows that Shafer and Vovk's model can be given an axiomatic characterisation and thereby provides an alternative motivation for adopting this model, even outside their game-theoretic framework.
Finally, we relate our model to the upper expectation resulting from a traditional measure-theoretic approach.
We show that this measure-theoretic upper expectation also satisfies the proposed axioms, which implies that it is dominated by our model or, equivalently, the game-theoretic model.
Moreover, if all local models are precise, all three models coincide.
\end{abstract}

\begin{keyword}
upper expectations \sep uncertain processes \sep coherence \sep game-theoretic probability \sep measure-theoretic probability \sep global uncertainty model
\end{keyword}

\maketitle

\section{Introduction}\label{sec:intro}

There are various ways to describe discrete-time uncertain processes, such as Markov processes, mathematically.
For many, measure theory is and has been the preferred framework to describe the uncertain dynamics of such processes \cite{shiryaev2016probabilityPartI,shiryaev2019probabilityPartII,billingsley1995probability}.
Others have used martingales or a game-theoretic approach \cite{Shafer:2005wx,Vovk2019finance,williams1991probability}.
However, the common starting point for all these approaches are the local belief models.
They describe the dynamics of the process from one time instant to the next and they are typically learned from measurements or elicited from experts.
For instance, in some specific cases, one typically may have information about `the probability of throwing heads on the next coin toss', `the expected number of goods that are sold by a certain shop on a single day', `the probability of rain tomorrow', ...
In a measure-theoretic context, such local belief models are presented in the form of probability charges or probability measures on the local state space; in a game-theoretic context, sets of allowable bets are used.
The latter belong to a family of so-called imprecise probability models \cite{Walley:1991vk,Augustin:2014di,troffaes2014}, e.g. upper and lower previsions (or expectations), sets of desirable gambles, credal sets...  
Such imprecise probability models generalise traditional precise models, in the sense that they allow for indecision and for a representation that only expresses partial knowledge about the probabilities.
In order to allow for imprecision in a measure-theoretic context, we will often consider sets of probability measures (or charges) on the local state space. 
Then, if local state-spaces are assumed to be finite, as in our case, the game-theoretic and measure-theoretic local descriptions are mathematically equivalent.
This will be clarified in Sections~\ref{Sect: upprev} and~\ref{sect: Upper Expectations in Discrete-Time Uncertain Processes}.

In practice however, we are typically interested in more general inferences such as `the (lower and upper) expected number of tosses until the first tails is thrown', `the (lower and upper) probability of being out of stock on a given day', ...
It is less straightforward how we should directly learn about such more complicated inferences and, even if we could in principle do so, it is often not possible or feasible to gather sufficient information because of time and budget limitations.
Hence, the question arises: `How and to which extent, can we extend the information captured in the local models towards global information about the entire process?'. 

Measure theory relies on countable additivity to do this elegantly, and this results in a mathematically powerful, but rather abstract framework. 
The mathematical arguments are often encumbered by measurability constraints, and it is not always clear how to extend classical measure-theoretic notions to an imprecise probabilities setting.
On the other hand, Shafer and Vovk's game-theoretic framework defines global upper expectations constructively using the concept of a `supermartingale'.
Though it allows us to directly model imprecision and does not require measurability constraints, the involved upper expectations thus far lack a characterisation in terms of mathematical properties.
This impacts the generality of their model in the negative, since it can therefore only be motivated from a game-theoretic point of view.

We propose a different, third approach.
Our aim is to establish a global belief model, in the form of a conditional upper expectation, that extends the information contained in the local models by using a number of mathematical properties.
Notably, this model will not be bound to a single interpretation.
Indeed, its characterising properties can be justified starting from a number of different interpretations.
We consider and discuss two of the most significant ones in Section \ref{Sect: Search of a global model}; a behavioural interpretation that is more in line with a game-theoretic approach to uncertainty, and an interpretation in terms of upper envelopes of linear expectation operators.
We then define the desired global model as the most conservative---the unique model that does not contain any additional information---under this particular set of properties.
In other words, it is the so-called natural extension \cite{Walley:1991vk,troffaes2014,williams1975} under this set of properties.

Care should be taken here though. While the concept of natural---that is, most conservative---extension is well-established in the field of imprecise probabilities, it usually refers to the natural extension \emph{under coherence}. 
The advantage of this notion of coherence, and the related natural extension, is that it can be applied to general uncertainty models. 
However, 
the scope and properties of the resulting extension can sometimes be rather weak.
A first issue with natural extension under coherence is that it traditionally only allows us to work with \emph{bounded} variables. The problem of how to extend this method to a broader class of variables was addressed in \cite[Chapter 13]{troffaes2014}, yet, only for the class of \emph{unbounded real-valued} variables, whereas we wish to consider extended real-valued variables as well. There is a second, more profound issue though, which is that the natural extension under coherence lacks some basic continuity properties that we feel are desirable. 
For example, in our context of uncertain processes, natural extension under coherence would lead to a global upper expectation that does not necessarily satisfy upward monotone convergence, even if we were to restrict attention to bounded variables.\footnote{The experienced (or interested) reader can verify this using the following example (the notation and terminology is introduced further on in the paper). Interestingly, it shows that upward monotone convergence  fails even for increasing sequences of $n$-measurable gambles that converge to a limit that is itself a gamble.
Fix any $x\in\statespace{}$ and let $\lupprev{s}(f)\coloneqq f(x)$ for all $s\in\situations{}$; this means that in every situation, the next state will be $x$ with (lower) probability 1. 
Consider now the increasing sequence $\{f_n\}_{n\in\nats}$ of $n$-measurable gambles defined, for all $n\in\nats{}$, by $f_n(\omega) \coloneqq 0$ if $\omega_i = x$ for all $i\in\{1,\cdots,n\}$ and $f_n(\omega) \coloneqq 1$ for all other $\omega\in\Omega$.
Then it can be checked that the value of the natural extension under coherence is~$0$ for all gambles $f_n$, but that it is $1$---and therefore vacuous---for the limit $f\coloneqq\lim_{n\to+\infty}f_n$. This means that the (upper) probability of observing a state that is not $x$ in the first $n$ steps, for any $n$, is zero, but that the upper probability of ever observing such a state is $1$.
We are indebted to Matthias C. M. Troffaes for this example---or rather, a very similar one that inspired it. He presented it in a talk at WPMSIIP 2013 in Lugano, and recently also discussed it with us in private communication; it seems to be unpublished though.}
Furthermore, and related to this, the corresponding inferences are often very conservative, or even vacuous.
If we would only be interested in \emph{bounded} variables and uncertain processes with a \emph{finite} time horizon, all of these issues would not come to the fore, and the natural extension under coherence would then be a valuable model; see for example \cite{deCooman:2008km}.
However, in our---vastly---more general setting, where we will be looking at \emph{extended real-valued} variables and uncertain processes with \emph{infinite} time horizons, we find this model unsatisfactory for the reasons described above.
We will therefore not be considering the natural extension under coherence (alone); instead, we will consider the natural extension under a different set of axioms.
Unlike the coherence axioms, our axioms are specifically designed with the purpose of extending local models in the context of uncertain processes (with infinite time horizons). As we will see, the resulting natural---most conservative---extension yields more satisfying results when it comes to the mathematical properties of the resulting global model.

In Section \ref{Sect: Axiomatisation}, we give an alternative characterisation of our most conservative model by showing that it is the unique operator that satisfies a specific set of properties.
This alternative characterisation is particularly interesting from a technical point of view, since it allows us to establish our main result in Section~\ref{Sect: Game-theoretic upper exp}: that our model coincides with a version of the game-theoretic upper expectation introduced by Shafer and Vovk \cite{Shafer:2005wx,Vovk2019finance}.
On the one hand, this serves as an additional motivation for our model.
On the other hand, it establishes a concrete axiomatisation for the game-theoretic upper expectation.
Section~\ref{Sect: additional properties} aims at strengthening the relevance of the most conservative model that results from our axioms by listing its most salient properties.

In the final section, we couch our findings in a more measure-theoretic language.
As a first step, we put forward a sensible way of applying measure theory in our imprecise probabilities setting.
To do this, we draw inspiration from earlier work done in our group \cite{8535240}.
Subsequently, we show that the measure-theoretic upper expectation this results in, satisfies the axioms presented in Section~\ref{Sect: Search of a global model} and is therefore always dominated by the most conservative one satisying these axioms---which is our model.
Moreover, the fact that this model, entirely based on measure-theoretic principles, also satisfies our proposed axioms, is to be seen as an additional motivation to adopt them.
Furthermore, in a context where all local models are precise, meaning that every single one of them can be represented by a single probability mass function on the local state space, both upper expectations---and hence, also the game-theoretic one---turn out to be one and the same model.

As a final remark, we want to mention that our current work extends upon an earlier conference contribution \cite{Tjoens2019NaturalExtensionISIPTA}.
The added material is mainly gathered in the penultimate section; it concerns the study of how our model is related to a possible measure-theoretic model.
Apart from that, another important difference with the conference contribution lies in how local models are extended in order to become consistent with the game-theoretic approach; see the first part of Section~\ref{Sect: Game-theoretic upper exp} and compare this with \cite[Section~2]{Tjoens2019NaturalExtensionISIPTA}.

\section{Upper Expectations}\label{Sect: upprev}

We denote the set of all natural numbers, without \(0\), by \(\nats\), and let \(\natz \coloneqq \nats \cup \{0\}\). 
The set of extended real numbers is denoted by \(\extreals \coloneqq \reals \cup \{+ \infty, - \infty\}\) and is endowed with the usual order and the usual order topology (corresponding to the two-point compactification of \(\reals{}\)).  
The set of positive real numbers is denoted by \(\posreals\) and the set of non-negative real numbers by \(\nnegreals\). 
We also adopt the usual conventions for the addition between the reals and \(+\infty\) and \(-\infty\), and the conventions that \(+\infty-\infty = -\infty+\infty = +\infty\) and \(0\cdot(+\infty) = 0\cdot(-\infty) = 0\).


Informally, we consider a subject who is uncertain about the value that some variable \(Y\) assumes in a non-empty set \(\mathscr{Y}\).
More formally, we call any map on \(\mathscr{Y}\) a \emph{variable}\/ ; our informal \(Y\) is a special case: it corresponds to the identity map on \(\mathscr{Y}\). 
A subject's uncertainty about the unknown value of \(Y\) can then be represented by an \emph{upper expectation} \(\upprev{}\): an extended real-valued map defined on some subset \(\mathscr{D}\) of the set \(\setofgenextvariables{}(\mathscr{Y})\) of all extended real-valued variables on \(\mathscr{Y}\). 
An element \(f\) of \(\setofgenextvariables{}(\mathscr{Y})\) is simply called an \emph{extended real variable}. 
We say that \(f\) is bounded below if there is some real \(c\) such that \(f(y) \geq c\) for all \(y \in \mathscr{Y}\), and we say that \(f\) is bounded above if \(-f\) is bounded below. 
We call a sequence \(\{f_n\}_{n \in \natz{}}\) of extended real variables \emph{uniformly bounded below} if there is a real \(c\) such that \(f_n(y) \geq c\) for all \(n \in \natz{}\) and \(y \in \mathscr{Y}\).
An important role will be reserved for elements \(f\) of \(\smash{\setofgenextvariables{}(\mathscr{Y})}\) that are bounded, meaning that they are bounded above \emph{and} below. 
Bounded real-valued variables on \(\mathscr{Y}\) are called \emph{gambles} on \(\mathscr{Y}\), and we use \(\setofgengambles{}(\mathscr{Y})\) to denote the set of all of them. 
The set of all bounded below elements of \(\setofgenextvariables{}(\mathscr{Y})\) is denoted by \(\setofgenextvariablesb{}(\mathscr{Y})\).

Consider now the special case that \(\upprev{}\) is defined on at least the set of all bounded real-valued variables; so \(\setofgengambles{}(\mathscr{Y}) \subseteq \mathscr{D}\).
Then we call \(\upprev{}\) \emph{boundedly coherent} \cite{Walley:1991vk,troffaes2014,Augustin:2014di} if it satisfies the following three coherence axioms:
\begin{enumerate}[leftmargin=*,ref={\upshape{C\arabic*}},label={\upshape{}C\arabic*}.,itemsep=3pt, series=sepcoherence ]
\item \label{sep coherence 1} 
\(\upprev{}(f) \leq \sup f\) for all \(f \in \setofgengambles{}(\mathscr{Y})\);  \hfill [upper bound]
\item \label{sep coherence 2} 
\(\upprev{}(f+g) \leq \upprev{}(f) + \upprev{}(g)\) for all \(f,g \in \setofgengambles{}(\mathscr{Y})\); \hfill [sub-additivity]
\item \label{sep coherence 3}
\(\upprev{}(\lambda f) = \lambda \upprev{}(f)\) for all \(\lambda \in \nnegreals{}\) and \(f \in \setofgengambles{}(\mathscr{Y})\). \hfill [non-negative homogeneity]
\end{enumerate}

\noindent
In the particular case where \(\mathscr{D}=\setofgengambles{}(\mathscr{Y})\), we will simply say that \(\upprev{}\) is \emph{coherent}.
For any (boundedly) coherent $\upprev{}$, if we let \(\lowprev{}\) be the conjugate lower expectation defined by \(\lowprev{}(f) \coloneqq -\upprev{}(-f)\) for all \(f \in -\mathscr{D}\), then the following additional properties follow from \ref{sep coherence 1}--\ref{sep coherence 3}:

\begin{enumerate}[leftmargin=*,ref={\upshape{C\arabic*}},label={\upshape{}C\arabic*}.,itemsep=3pt,resume=sepcoherence]
\item \label{sep coherence 4}
\(f \leq g \Rightarrow \upprev{}(f) \leq \upprev{}(g)\) for all \(f, g \in \setofgengambles(\mathscr{Y})\); \hfill [monotonicity]
\item \label{sep coherence 5}
\(\inf f \leq \lowprev(f) \leq \upprev{}(f) \leq \sup f\) for all \(f \in \setofgengambles(\mathscr{Y})\); \hfill [bounds]
\item \label{sep coherence 6}
\(\upprev{}(f + \mu) = \upprev{}(f) + \mu\) for all \(\mu \in \reals{}\) and all \(f \in \setofgengambles(\mathscr{Y})\); \hfill [constant additivity]
\item \label{sep coherence 7}
for any $f\in\setofgengambles{}(\mathscr{Y})$ and any sequence \(\{f_n\}_{n \in \natz}\) in \(\setofgengambles{}(\mathscr{Y})\): \hfill [uniform continuity]
\begin{equation*}
\lim_{n \to +\infty} \sup{\vert f - f_n \vert} = 0~\Rightarrow\lim_{n \to +\infty}\upprev{}(f_n) = \upprev{}(f).
\end{equation*}
\end{enumerate}

\begin{proof}
We only prove \ref{sep coherence 5}, which clearly implies that \(\upprev{}\) is real-valued on \(\setofgengambles{}(\mathscr{Y})\), and the remaining properties then follow from the standard argumentation in~\cite[Section 2.6.1.]{Walley:1991vk}.

First, note that \(\upprev{}(0) = 0\) because of \ref{sep coherence 3} and our convention that \(0\cdot(+\infty) = 0\cdot(-\infty) = 0\).
Therefore, for all \(f\in\setofgengambles{}(\mathscr{Y})\), it follows from \ref{sep coherence 2} that \(0 \leq \upprev{}(f) + \upprev{}(-f)\), or equivalently, that \(-\upprev{}(-f)\leq \upprev{}(f)\). 
Applying \ref{sep coherence 1} to both sides, we find that \(\inf f=-\sup(-f)\leq-\upprev{}(-f)\leq\upprev{}(f)\leq\sup f\). 
The result now follows readily from the definition of \(\lowprev\).
\end{proof}

A gamble \(f\) is typically interpreted as an uncertain reward or gain that depends on the value that \(Y\) takes in \(\mathscr{Y}\); if \(Y\) takes the value \(y\), the (possibly negative) gain is \(f(y)\).
Then, according to Walley's behavioural interpretation \cite{Walley:1991vk}, the upper expectation \(\upprev{}(f)\) of a gamble \(f\) is a subject's infimum selling price for the gamble \(f\), implying that, for any \(\alpha > \upprev{}(f)\), the subject is willing to accept the gamble \(\alpha - f\).\footnote{\label{note1}Strictly speaking, the interpretation as infimum selling price only guarantees that there are \(\alpha > \upprev{}(f)\) \emph{arbitrarily close to} $\upprev{}(f)$ for which our subject accepts \(\alpha - f\) (by definition of the infimum). Our claim that this is true for \emph{all} \(\alpha > \upprev{}(f)\) is based on an additional implicit assumption that if a subject is willing to sell a gamble for some price, then she is also willing to sell that gamble for any higher price. 
}
Axioms \ref{sep coherence 1}--\ref{sep coherence 3} are then called rationality axioms, since they ensure that these selling prices are chosen rationally. 
Alternatively, any coherent upper expectation on \(\setofgengambles{}(\mathscr{Y})\) can equivalently be represented by a set of \emph{linear expectations} on \(\setofgengambles{}(\mathscr{Y})\): coherent upper expectations on \(\setofgengambles{}(\mathscr{Y})\) that are self-conjugate, meaning that \(\upprev{}(f) = - \upprev{}(-f)\) for all \(f \in \setofgengambles{}(\mathscr{Y})\).
More precisely, it follows from \cite[Section 3.3.3]{Walley:1991vk} that any coherent upper expectation \(\upprev{}\) on \(\setofgengambles{}(\mathscr{Y})\) is the upper envelope of the set of all linear expectations on \(\setofgengambles{}(\mathscr{Y})\) that are dominated by \(\upprev{}\); so 
\(\upprev{}(f) = \sup \bigl\{\prev{}(f) \colon (\forall g \in \setofgengambles{}(\mathscr{Y})) \, \prev{}(g) \leq \upprev{}(g) \bigr\}\), where \(\prev{}\) ranges over the linear expectations on \(\setofgengambles{}(\mathscr{Y})\).
Moreover, according to \cite[Theorem 8.15]{troffaes2014}, linear expectations on \(\setofgengambles{}(\mathscr{Y})\) are in a one-to-one relation with \emph{probability charges} on the power set \(\powset{\mathscr{Y}}\) of \(\mathscr{Y}\), being maps \(p \colon \powset{\mathscr{Y}} \to \nnegreals{}\) that are finitely additive and where \(p(\mathscr{Y}) = 1\) and \(p(\emptyset) = 0\). 
Such probability charges are more general than the more conventional \emph{probability measures}, which additionally require \(\sigma\)-additivity and typically also that the domain should be restricted to a \(\sigma\)-algebra on \(\mathscr{Y}\).
However, when \(\mathscr{Y}\) is finite, this distinction disappears, and we can simply limit ourselves to working with probability mass functions on $\mathscr{Y}$; these can be seen as probability charges or measures restricted to the domain of all singletons.
In that case, we have that \(\prev{}(f) = \sum_{y \in \mathscr{Y}} f(y) p(y)\) for any linear expectation \(\prev{}\) on \(\setofgengambles{}(\mathscr{Y})\) and any \(f \in \setofgengambles{}(\mathscr{Y})\), where \(p\) is the unique probability mass function on \(\mathscr{Y}\) defined by \(p(y) \coloneqq \prev{}(\genindica{y})\) for all \(y \in \mathscr{Y}\).
Here, we used \(\genindica{A}\) to denote the \emph{indicator} of \(A \subseteq \mathscr{Y}\): the gamble on \(\mathscr{Y}\) assuming the value \(1\) on \(A\) and \(0\) elsewhere, and where we will not distinguish in notation between \(y\) and \(\{y\}\).
Hence, if \(\mathscr{Y}\) is finite, any coherent upper expectation \(\upprev{}\) on \(\setofgengambles{}(\mathscr{Y})\) can alternatively be represented by the corresponding set of probability mass functions 
\begin{align}\label{Eq: link upprev and mass functions}
\mathbb{P}_{\mbox{\tiny$\upprev{}$}} \coloneqq \Big\{p\in\mathbb{P} \colon \big(\forall f \in \setofgengambles{}(\mathscr{Y})\big) \sum_{y \in \mathscr{Y}} f(y) p(y) \leq \upprev{}(f)\Big\},
\end{align}
where $\mathbb{P}$ denotes the set of all probability mass functions on $\mathscr{Y}$.
It then follows from \cite[Theorem 2]{Williams:2007eu}\footnote{More specifically, Equation~\eqref{Eq: second link upprev and mass functions} does not follow from \cite[Theorem 2]{Williams:2007eu} itself, but rather from the last paragraph of its proof.} that 
\begin{align}\label{Eq: second link upprev and mass functions}
\upprev{}(f) = \max \biggl\{ \sum_{y \in \mathscr{Y}} f(y) p(y) \colon p \in \mathbb{P}_{\mbox{\tiny$\upprev{}$}} \biggr\} \text{ for all } f \in \setofgengambles{}(\mathscr{Y}).
\end{align}
This alternative representation of a coherent upper expectation will be our starting point in Section~\ref{sect: relation to measure theory}, where we aim to establish a measure-theoretic global belief model in the form of an upper expectation.

It follows from the discussion above that, in general, (boundedly coherent) upper expectations can be interpreted in at least two possible ways: in a direct behavioural manner in terms of selling prices for gambles, or as a supremum over---an upper envelope of---a set of linear expectations.
As will be discussed in Section~\ref{Sect: Game-theoretic upper exp}, the behavioural interpretation allows us to motivate the game-theoretic approach proposed by Shafer and Vovk.
On the other hand, an interpretation in terms of upper envelopes of linear expectations seems more natural from a measure-theoretic point of view, where probability measures or mass functions are regarded as the primary objects for describing uncertainty.
Linear expectations are then typically obtained by integration and limiting the domain to measurable functions.
In this paper, we will not allow ourselves to be bound to any of these two interpretations.
Instead, we will motivate the defining properties of our proposed global belief model in terms of either of these interpretations.

\section{Upper Expectations in Discrete-Time Uncertain Processes}\label{sect: Upper Expectations in Discrete-Time Uncertain Processes}

We consider a \emph{discrete-time uncertain process}\/ : a sequence \(X_1, X_2, ..., X_n , ...\) of uncertain states, where the state \(X_k\) at each discrete time \(k \in \nats\) takes values in a fixed non-empty set \(\statespace{}\), called the \emph{state space}.
We will assume that this state space $\statespace{}$ is \emph{finite}.
Let a \emph{situation} \(\sit{}\) be any string \((x_1,...,x_n) \in \statespace{}_{1:n} \coloneqq \statespace{}^n\) of possible state values with finite length \(n \in \natz\).
In particular, the unique empty string \(x_{1:0}\), denoted by \(\Box\), is called the \emph{initial situation}: \(\statespace{}_{1:0} \coloneqq \{\Box\}\).
We denote the set of all situations by \(\statespace{}^\ast \coloneqq \cup_{n \in \natz{}} \mathscr{X}_{1:n}\).
We will also use the generic notations \(s\) and \(t\) to denote situations.
Furthermore, when we write \(x_{1:n} \in \situations{}\), we implicitly assume that \(n \in \natz\).

To model our uncertainty about the dynamics of an uncertain process, we associate, with every situation \(x_{1:n} \in \situations{}\), a coherent upper expectation \(\lupprev{x_{1:n}}\) on \(\setofgengambles{}(\statespace{})\).
This upper expectation expresses a subject's beliefs about the uncertain value of the next state \(X_{n + 1}\), when she has observed that \(X_1 = x_1, X_2 = x_2,\cdots, X_{n-1} = x_{n-1}\) and \(X_n = x_n\).
Hence, it gives information about how the process changes from one time instant to the next.
We will therefore also refer to \(\lupprev{x_{1:n}}\) as the \emph{local} model or the \emph{local} upper expectation associated with \(x_{1:n}\).
We gather all local models in an \emph{imprecise probability tree}\/ : a function \(\tree{}_{\bullet}\) that maps any situation \(s \in \situations{}\) to its corresponding coherent upper expectation \(\lupprev{s}\).
Hence, such an imprecise probability tree \(\tree{}_{\bullet}\), which we will also simply denote by \(\tree{}\), represents the dynamics of the uncertain process as a whole.

As discussed in the previous section, we consider two possible ways of interpreting the coherent upper expectations \(\lupprev{s}\): in terms of upper envelopes of linear expectations and in terms of betting behaviour.
The latter is, by nature, more oriented towards a game-theoretic approach, whereas the interpretation in terms of upper envelopes of linear expectations will seem more natural for a reader with a measure-theoretic background.
In fact, since \(\statespace{}\) is assumed to be finite, the coherent upper expectation \(\lupprev{s}\), for any \(s \in \situations{}\), can be represented equivalently by the set \(\mathbb{P}_{s}\coloneqq\mathbb{P}_{\mbox{\tiny$\lupprev{s}$}}\) of probability mass functions on \(\statespace{}\), related to it by Equations~\eqref{Eq: link upprev and mass functions} and~\eqref{Eq: second link upprev and mass functions}.
In measure-theoretic contexts, this representation in terms of \(\mathbb{P}_s\) is then typically preferred over the representation in terms of \(\lupprev{s}\).

The mathematical equivalence between the different approaches---the game-theoretic, the measure-theoretic and ours---on a local level is crucial, because only then, we can meaningfully compare the differences of their associated uncertainty models on a global level. 
Indeed, though it is locally only a matter of interpretational differences,
it is not immediately clear how these different approaches relate on a global level, where we look at the entire dynamics of the process and no longer only at the transition from one time instant to the next. 
Each approach will extend the local models using different concepts, assumptions and methods, resulting in possibly different global models.
Our upper expectation approach will use a limited set of simple properties, that, as we will argue, are desirable for a global model to have, under both of the interpretations we consider. 
Before we proceed to do so, we finish this section by introducing some further notation about uncertain processes.

An infinite sequence \(x_1 x_2 x_3 \cdots\) of state values is called a \emph{path}, which we denote by \(\omega \coloneqq x_1 x_2 x_3 \cdots\). 
We gather all paths in the \emph{sample space} \/ \(\Omega \coloneqq \statespace{}^\nats\).
For any path \(\omega \coloneqq x_1 x_2 \cdots \in \Omega\), the situation \(x_{1:n} \coloneqq x_1x_2 \cdots x_n\) that consists of its first \(n\) state values is denoted by \(\omega^n \in \statespaceseq{1}{n}\). 
The state value \(x_n\) at time \(n\) is denoted by \(\omega_n \in \statespace\).
An \emph{event} \(A \subseteq \Omega\) is a collection of paths, and in particular, the \emph{cylinder event} \(\Gamma(\sit{}) \coloneqq \{\omega \in \Omega\colon \omega^n = \sit{}\}\) of some situation \(\sit{} \in \situations{}\), is the set of all paths \(\omega \in \Omega\) that `go through' the situation \(\sit{}\).

A variable on \(\Omega\) is called a \emph{global} variable and we gather all extended real(-valued) global variables in the set \(\setofextvariables{} \coloneqq \setofgenextvariables{}(\Omega)\).
Similarly, we let \(\setofextvariablesb{} \coloneqq \setofgenextvariablesb{}(\Omega)\) and \(\setofgambles{} \coloneqq \setofgengambles{}(\Omega)\).
For any natural \(k \leq \ell\), we denote by \(X_{k:\ell}\) the global variable defined by \(X_{k:\ell}(\omega)\coloneqq(\omega_k,...,\omega_\ell)\) for all \(\omega \in \Omega\).
As such, the state \(X_k \coloneqq X_{k:k}\) at time \(k\) can also be regarded as a global variable.
Moreover, for any natural \(k \leq \ell\) and any map \(f \colon \statespace{}^{\ell - k +1} \to \extreals{}\), we will write \(f(X_{k:\ell})\) to denote the global extended real variable defined by \(f(X_{k:\ell}) \coloneqq f \circ X_{k:\ell}\).
We call a global extended real variable \(f\) \emph{\(n\)-measurable} for some \(n \in \natz{}\), if it only depends on the initial \(n\) state values; so \(f(\omega_1) = f(\omega_2)\) for any two paths \(\omega_1\) and \(\omega_2\) such that \(\omega_1^n = \omega_2^n\).
We will then use the notation \(f(\sit{})\) for its constant value \(f(\omega)\) on all paths \(\omega \in \Gamma(\sit{})\).
Clearly, the indicator \(\indica{\sit{}} \coloneqq \genindica{\Gamma(\sit{})}\) of the cylinder event \(\Gamma(\sit{})\) for \(\sit{} \in \situations{}\) is an \(n\)-measurable variable.
Finally, we call any \(f \in \setofextvariables{}\) \emph{finitary} if it is \(n\)-measurable for some \(n \in \natz{}\).
We gather all finitary \emph{gambles} in \(\setoffingambles{}\).

\section{In Search of a Global Belief Model}\label{Sect: Search of a global model}

Any extended real-valued map \(\upprev{} \colon \setofextvariables{} \times \situations{} \to \extreals{} \colon (f,s) \mapsto \upprev{}(f \vert s)\) will be called a \emph{global upper expectation}.
The corresponding \emph{global lower expectation} \(\lowprev{} \colon \setofextvariables{} \times \situations{} \to \extreals{} \colon (f,s) \mapsto \lowprev{}(f \vert s)\) is then defined by  conjugacy: $\lowprev{}(f\vert s) \coloneqq -\upprev{}(-f\vert s)$ for all $f\in\setofextvariables{}$ and all $s\in\situations{}$. These lower and upper expectations also give rise to upper and lower probabilities. In particular, for any event $A\subseteq\Omega$ and any $s\in\situations{}$, $\overline{\mathrm{P}}(A \vert s)\coloneqq\upprev{}(\genindica{A}\vert s)$ is the \emph{upper probability} of $A$ conditional on $s$ and, similarly,  $\underline{\mathrm{P}}(A \vert s)\coloneqq\lowprev{}(\genindica{A}\vert s)$ is the \emph{lower probability} of $A$ conditional on $s$.
Throughout this work, however, we will focus almost entirely on (global) upper expectations, and will regard (global) lower expectations and upper and lower probabilities as derived notions.

Given an imprecise probability tree \(\tree{}\) that associates a local upper expectation \(\lupprev{s}\) with every situation \(s \in \situations{}\), we aim to define a global upper expectation \(\upprev{}\) that extends the information included in these local models in a `rational' manner.
To do so, we impose the following properties, where we adopt the notation that \(\upprev{}(f \vert X_{1:n}) \coloneqq \upprev{}(f \vert \cdot) \circ X_{1:n}\) for all \(f\in\setofextvariables{}\) and all \(n\in\natz\), and where limits of variables are intended to be taken pointwise.

\begin{enumerate}[leftmargin=*,ref={\upshape P\arabic*},label={\upshape P\arabic*}.,itemsep=3pt,series=Properties]
\item \label{P compatibility}
\(\upprev{} (f(X_{n+1}) \vert \sit{}) = \lupprev{\sit{}}(f)\) \, for all \(f \in \setofgengambles{}(\statespace{})\) and all \(\sit{} \in \situations{}\).
		
\item \label{P conditional} 
\(\upprev{} (f \vert s) = \upprev{} (f \, \indica{s} \vert s)\) \, for all \(f \in \setoffingambles{}\) and all \(s \in \situations{}\).
\item \label{P iterated} 
\(\upprev{}(f \vert X_{1:n}) \leq \upprev{}(\upprev{}(f \vert X_{1:n+1}) \vert X_{1:n})\) \, for all \(f \in \setoffingambles{}\) and all \(n \in \natz\). 
\item \label{P monotonicity}  
\(f \leq g \Rightarrow \upprev{}(f \vert s) \leq \upprev{}(g \vert s)\) \, for all \(f,g \in \setofextvariables{}\) and all \(s \in \situations{}\).
\item \label{P continuity}
For any sequence \(\{f_n\}_{n \in \natz}\) of finitary gambles that is uniformly bounded below and any \(s \in \situations{}\):
\begin{equation*}
\lim_{n \to +\infty} f_n = f \Rightarrow 
\limsup_{n \to +\infty} \upprev{}(f_n \vert s) \geq \upprev{}(f \vert s).
\end{equation*} 
\end{enumerate}
\noindent
To motivate \ref{P compatibility}--\ref{P continuity}, we need to attach some interpretation to \(\upprev{}\).
We will consider two particular ones, similar to what we have done for the (boundedly) coherent upper expectations in Section~\ref{Sect: upprev}.

We start from the interpretation of a global \emph{gamble} \(f \in \setofgambles{}\) as an uncertain reward depending on the uncertain path \(\omega\) that the uncertain process takes in \(\Omega\).
However, it is not clear what this means operationally if the gamble \(f\) depends on the entire length of the path.
Indeed, the gamble \(f\) depends on an infinite number of subsequent state values, so there is no point in time when we can determine the actual reward linked to the gamble~\(f\).
The same interpretational problem arises when considering unbounded or extended real variables (on a general set \(\mathscr{Y}\) such as \(\Omega\)).
The simple interpretation of an uncertain reward does not suffice there because the reward itself can be unbounded or infinite, which is unrealistic---or even meaningless---in operational practice.
For this reason, we prefer to only attach a direct operational meaning to the value \(\upprev{}(f \vert s)\) of a global upper expectation \(\upprev{}\) for a \emph{finitary gamble} \(f \in \setoffingambles{}\) conditional on a situation \(s \in \situations{}\).
Such finitary gambles can be given an operationally meaningful behavioural interpretation as uncertain rewards because they take real values\footnote{One could also question the meaning of a direct behavioural interpretation for some specific (real-valued) gambles. For instance, how do we exchange money if the gamble's value is equal to \(\pi\)? In this case, we can only give an indirect interpretation in terms of simpler gambles that are `sufficiently' close.} and only depend on the state at a finite number of time instances.

We distinguish the following two ways for interpreting the global upper expectation \(\upprev{}(f \vert s)\) of a finitary gamble \(f \in \setoffingambles{}\) conditional on a situation \(s \in \situations{}\):
\begin{itemize}[leftmargin=*]
\item
\textbf{Behavioural interpretation.} It is a subject's infimum selling price for \(f\) \emph{contingent} on the event \(\Gamma(s)\), implying that,\footnote{The comment in Footnote~\ref{note1}, applies here as well, suitable adapted to the conditional setting.} for any \(\alpha > \upprev{}(f \vert s)\), she is willing to accept the uncertain reward associated with the gamble \(\indica{s} (\alpha - f)\).
\item
\textbf{Interpretation as an upper envelope.} 
It is the supremum value of \(\prev{}(f\vert s)\), where \(\prev{}\) belongs to some given set~\(\setofprev{}\) of conditional linear expectation operators: \(\upprev{}(f\vert s)=\sup \{\prev{}(f \vert s) \colon \prev{} \in \setofprev{}\}\).
\end{itemize} 

Since \ref{P compatibility}--\ref{P iterated} only apply to finitary gambles, a direct justification for these axioms can be given quite easily in each of the above interpretations.
Property \ref{P compatibility} requires that the global model \(\upprev{}\) should be compatible with the local models \(\lupprev{s}\).
The desirability of this property is self-evident, no matter which interpretation is used.
Property \ref{P conditional} requires that the upper expectation of a finitary gamble \(f\) conditional on \(s\) should only depend on the value of \(f\) on the paths \(\omega \in \Gamma(s)\).
This property is clearly desirable when using the behavioural interpretation, because \(\indica{s} (\alpha - f)\) only depends on the restriction of \(f\) to \(\Gamma(s)\).
It is also quite evident that this property is desirable for an upper envelope of conditional linear expectations, because \ref{P conditional} should in particular also hold for such conditional linear expectations themselves.
Similarly, that \ref{P iterated} should hold under the upper envelope interpretation, is motivated by the fact that conditional linear expectations satisfy \ref{P iterated} with equality---then better known as the law of iterated expectations; an upper envelope of conditional expectations is therefore guaranteed to satisfy \ref{P iterated}, under the assumption that \ref{P monotonicity} holds for conditional linear expectations.
In order to see that property \ref{P iterated} is also desirable according to the behavioural interpretation, one requires a conditional version of the notion of coherence that we discussed in Section~\ref{Sect: upprev}~\cite{Walley:1991vk,williams1975,troffaes2014}.\footnote{There are several versions of conditional coherence \cite{Walley:1991vk,williams1975,troffaes2014}, however, in the case where variables take values in a finite set, all these different versions are mathematically equivalent.} 
Explaining in more detail why this is the case, and what this conditional notion of coherence exactly entails, would however lead us too far. 

Having attached an interpretation to finitary gambles and their conditional upper expectations, we now proceed to do the same for more general variables.
We have already argued that no direct operational meaning can be given to such variables. 
However, this should not be taken to imply that an uncertainty model should not be able to deal with them.
In fact, they can serve as useful abstract idealisations of (sequences of) variables that can be given a direct operational meaning.
In particular, we will regard any extended real variable \(f\) that is bounded below and that can be written as the pointwise limit \(\lim_{n \to +\infty} f_n\) of some sequence of finitary gambles \(\{f_n\}_{n \in \natz{}}\), as an abstract idealisation of \(f_n\) for \emph{large} \(n\). 
We gather these limits in the set
\begin{align*}
\setoflimitsoffinmeasextb{} \coloneqq 
\biggl\{f \in \setofextvariablesb{} \colon f = \lim_{n \to +\infty} f_n \text{ for } \text{some sequence } \{f_n\}_{n \in \natz{}} \text{ in } \setoffingambles{} \biggr\}.
\end{align*}
Since \ref{P continuity} applies to precisely these kinds of variables, this axiom can be justified by extending the above idealisation from the variables \(f\) to their upper expectations \(\upprev{}(f\vert s)\). 
Basically, since \(f\) is an abstract idealisation of \(f_n\) for large \(n\), \(\upprev{}(f\vert s)\) should be an abstract idealisation of \(\upprev{}(f_n\vert s)\) for large \(n\). The practical benefit of this is that we can then use \(\upprev{}(f\vert s)\) to reason about \(\upprev{}(f_n\vert s)\) for a generic large value of \(n\), without having to specify the specific value of \(n\).
The problem, however, is that the sequence \(\smash{\bigl\{\upprev{}(f_n \vert s) \bigr\}_{n \in \natz{}}}\) may not converge. 
What we then do know for sure, however, is that as \(n\) approaches infinity, \(\upprev{}(f_n\vert s)\) will oscillate between the limit superior and inferior of the sequence \(\smash{\big\{\upprev{}(f_n \vert s) \big\}_{n \in \natz{}}}\). 
Since we want \(\upprev{}(f\vert s)\) to serve as an idealisation of \(\upprev{}(f_n\vert s)\) for generic large values of \(n\), \(\upprev{}(f\vert s)\) should therefore definitely not exceed the limit superior, as this would result in an unwarranted loss of `information', in the sense that $\upprev{}(f\vert s)$ would be too high---this will be clarified shortly.
We therefore impose~\ref{P continuity}.

The final property that we impose is \ref{P monotonicity}, which states that \(\upprev{}\) should be monotone. 
For finitary gambles~\(f\) and~\(g\), this follows easily from either of our two different interpretations for \(\upprev{}(f\vert s)\) and \(\upprev{}(g\vert s)\). 
Under a behavioural interpretation, since the reward associated with \(g\) is guaranteed to be at least as high as that of \(f\), the same should be true for a subject's infimum selling prices for these two gambles. 
Under an interpretation in terms of upper envelopes of expectations, monotonicity of the envelope is implied by the monotonicity of each of the individual expectations. 
If \(f\) and \(g\) are more general variables---so not necessarily finitary gambles---the motivation for \ref{P monotonicity} is that, still, higher rewards---even if abstract and idealized---should correspond to higher upper expectations. It is also worth noting that the combination of \ref{P monotonicity} and \ref{P continuity} implies that \(\upprev{}\) is continuous with respect to non-decreasing sequences of finitary gambles.
In a measure-theoretic context, this kind of continuity is usually obtained as a consequence of the assumption of \(\sigma\)-additivity \cite{shiryaev2016probabilityPartI,royden2010real}.

We will show in Section \ref{Sect: Game-theoretic upper exp} that \ref{P compatibility}--\ref{P continuity} are consistent, in the sense that if the local models \(\lupprev{s}\) are coherent, there always is at least one global upper expectation \(\upprev{}\) satisfying \ref{P compatibility}--\ref{P continuity}.
However, there may be more than one global upper expectation \(\upprev{}\) satisfying \ref{P compatibility}--\ref{P continuity}. In that case, the best thing to do, we believe, is to choose the most conservative model among those that satisfy \ref{P compatibility}--\ref{P continuity}, as choosing any other would mean adding `information' that is not simply implied by our axioms.
We will denote this most conservative global upper expectation by \(\upprev{\mathrm{A}}\) and let \(\lowprev{}_{\hspace*{1pt}\raisebox{0pt}{\scriptsize $\mathrm{A}$}}\) be its conjugate lower expectation. 
As we will see in Section~\ref{Sect: Game-theoretic upper exp}, \(\upprev{\mathrm{A}}\) is guaranteed to exist, and furthermore coincides with a particular version of the game-theoretic upper expectation defined by Shafer and Vovk \cite{Shafer:2005wx,shafer2012}.

Of course, in order for our definition of \(\upprev{\mathrm{A}}\) to make sense, we need to know what it means for an upper expectation \(\upprev{}{'}\) to be more conservative than some other upper expectation \(\upprev{}\). 
We here take this to mean that \(\upprev{}{'}\) is higher than \(\upprev{}\). So higher upper expectations are more conservative, or less informative. 
This is why, in our motivation for \ref{P continuity}, we said that a too high \(\upprev{}(f\vert s)\) corresponds to an unwarranted loss of `information'. 
That it is indeed reasonable to regard higher expectations as more conservative, can again be motivated using either of the two interpretations that we considered before.
Under the behavioural interpretation, higher upper expectations mean higher selling prices, which is clearly more conservative. 
Using an interpretation in terms of upper envelopes of expectations, higher upper expectations correspond to larger sets of expectations, so weaker constraints on expectations, which is again less informative and hence more conservative.

Having said all this, we would like to stress that---despite our extensive use of them to motivate our axioms---none of the results that we are about to develop hinge on any particular interpretation for upper expectations. 
Other interpretations could also be adopted, or perhaps even no interpretation at all. All that is needed is that we agree on \ref{P compatibility}--\ref{P continuity} and on the fact that higher upper expectations are more conservative.

\section{An Axiomatisation of \(\upprev{\mathrm{A}}\)}\label{Sect: Axiomatisation}
For a given imprecise probability tree \(\tree{}\), let \(\setofupprev{}_{1-2}(\tree{})\) denote the set of all global upper expectations satisfying \ref{P compatibility}--\ref{P conditional}, and similarly for \(\setofupprev{}_{1-4}(\tree{})\) and \(\setofupprev{}_{1-5}(\tree{})\).
In this section, we introduce sufficient conditions for a global upper expectation \(\upprev{}\) to be the most conservative among all upper expectations in \(\setofupprev{}_{1-5}(\tree{})\).
We start by considering the domain of finitary gambles and then, step by step, extend the domain and introduce additional conditions on \(\upprev{}\), in such a way that it becomes the most conservative on this extended domain.

For any situation \(\sit{} \in \situations\) and any \((n+1)\)-measurable (global) gamble \(f\), we use \(f(\sit{} \cdot)\) 
to denote the gamble on~\(\statespace{}\) that assumes the value \(f(x_{1:n+1})\) in \(x_{n+1} \in \statespace{}\), and then use \(f(x_{1:n}X_{n+1})\) as a shorthand for the global gamble \(f(x_{1:n}\cdot) \circ X_{n+1}\).
The following lemma establishes compatibility with the local models in a stronger way than \ref{P compatibility} does.

\begin{lemma}\label{Lemma: compatibility}
Consider any \(\upprev{} \in \setofupprev{}_{1-2}(\tree{})\).
Then, for any situation \(\sit{} \in \situations\) and any \((n+1)\)-measurable gamble \(f\),
\begin{equation*}
		\upprev{} (f \vert \sit{}) = \lupprev{\sit{}}(f(\sit{} \cdot)).
\end{equation*}
\end{lemma}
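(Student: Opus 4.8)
The plan is to reduce the statement to a single application of \ref{P compatibility}, after rewriting $f$ on the cylinder event $\Gamma(\sit{})$ as a gamble that depends only on the next state $X_{n+1}$. Write $\sit{}=x_{1:n}$ and introduce the gamble $g \coloneqq f(\sit{}\cdot) \in \setofgengambles{}(\statespace{})$ on the local state space. The crucial (and essentially only non-mechanical) observation is the pointwise identity $f\,\indica{\sit{}} = g(X_{n+1})\,\indica{\sit{}}$ between global variables: for any $\omega \in \Gamma(\sit{})$ we have $\omega^n = \sit{}$, so $(n+1)$-measurability of $f$ gives $f(\omega) = f(x_{1:n}\omega_{n+1}) = g(\omega_{n+1}) = \bigl(g(X_{n+1})\bigr)(\omega)$, while for $\omega \notin \Gamma(\sit{})$ both sides are $0$. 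Note also that both $f$ and $g(X_{n+1}) = g \circ X_{n+1}$ lie in $\setoffingambles{}$ (they are $(n+1)$-measurable gambles), so \ref{P conditional} is applicable to each of them.

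With this in hand I would simply chain the two available axioms. By \ref{P conditional} applied to $f$, $\upprev{}(f\vert\sit{}) = \upprev{}(f\,\indica{\sit{}}\vert\sit{})$; substituting the identity above turns this into $\upprev{}(g(X_{n+1})\,\indica{\sit{}}\vert\sit{})$; applying \ref{P conditional} once more, this time in the reverse direction to the finitary gamble $g(X_{n+1})$, gives $\upprev{}(g(X_{n+1})\vert\sit{})$; and finally \ref{P compatibility} (with $f$ there replaced by $g$) yields $\upprev{}(g(X_{n+1})\vert\sit{}) = \lupprev{\sit{}}(g) = \lupprev{\sit{}}(f(\sit{}\cdot))$, which is the claim.

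The argument is thus almost entirely bookkeeping with \ref{P compatibility} and \ref{P conditional}; the one step that carries any content, and hence the main obstacle (such as it is), is verifying $f\,\indica{\sit{}} = g(X_{n+1})\,\indica{\sit{}}$ — that is, recognising that an $(n+1)$-measurable gamble, once multiplied by $\indica{\sit{}}$, is completely determined by its dependence on the single coordinate $X_{n+1}$, with the first $n$ coordinates frozen at $\sit{}$. I would state that identity explicitly and check it on and off $\Gamma(\sit{})$ as above before invoking the axioms.
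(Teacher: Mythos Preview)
Your proposal is correct and follows essentially the same approach as the paper's proof: both observe the pointwise identity $f\,\indica{\sit{}} = f(\sit{}X_{n+1})\,\indica{\sit{}}$ (your $g(X_{n+1})$ is exactly the paper's $f(\sit{}X_{n+1})$), then apply \ref{P conditional} twice to pass from $f$ to $f(\sit{}X_{n+1})$ and finish with \ref{P compatibility}. Your version is slightly more explicit in verifying the identity on and off $\Gamma(\sit{})$ and in checking that \ref{P conditional} applies, but the argument is the same.
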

\begin{proof}
Fix any \(\sit{} \in \situations\) and any \((n+1)\)-measurable gamble \(f\).
Note that \(f(\sit{} X_{n+1}) \indica{\sit{}} = f \indica{\sit{}}\) and hence, because of \ref{P conditional}, \(\upprev{} (f \vert \sit{}) = \upprev{} (f \indica{\sit} \vert \sit{}) = \upprev{} (f(\sit{} X_{n+1}) \indica{\sit} \vert \sit{}) = \upprev{} (f(\sit{} X_{n+1}) \vert \sit{})\). 
\ref{P compatibility} therefore implies that, indeed, \(\upprev{} (f \vert \sit{}) = \lupprev{\sit{}}(f(\sit{} \cdot))\).
\end{proof}

\noindent
To make sure that some particular upper expectation \(\upprev{} \in \setofupprev{}_{1-4}(\tree{})\) is a most conservative upper expectation on the domain of all finitary gambles, we impose on it the following property, known as \emph{the law of iterated upper expectations} \cite{DECOOMAN201618,Shafer:2005wx}: 

\begin{enumerate}[leftmargin=*,itemsep=6pt,ref={\upshape P\arabic*\(^{=}\)},label={\upshape P\arabic*\(^{=}\)}.,start=3]
\item \label{P iterated general} 
\(\upprev{}(f \vert X_{1:n}) = \upprev{}(\upprev{}(f \vert X_{1:n+1}) \vert X_{1:n})\) \, for all \(f \in \setoffingambles{}\) and all \(n\in\natz\). 
\end{enumerate}

\begin{proposition}\label{proposition: largest on n-measurables} 
Consider any \(\upprev{} \in \setofupprev{}_{1-4}(\tree{})\) that satisfies \ref{P iterated general}.
Then, for any \(\upprev{}{'} \in \setofupprev{}_{1-4}(\tree{})\), we have that 
\begin{align*}
\upprev{}(f \vert s) \geq \upprev{}{'}(f \vert s) \text{ for all } f \in \setoffingambles{} \text{ and all } s \in \situations{}.
\end{align*}
\end{proposition}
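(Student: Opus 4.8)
The plan is to show that, for a finitary gamble $f$, the value $\upprev{}(f\vert s)$ is forced to equal the outcome of a \emph{backward recursion} over the local models $\lupprev{\cdot}$ whenever $\upprev{}$ satisfies \ref{P iterated general}, and that the same recursion bounds $\upprev{}{'}(f\vert s)$ from above for any $\upprev{}{'}\in\setofupprev{}_{1-4}(\tree{})$. Fix a finitary gamble $f$ and a situation $s$, and pick $N\in\natz$ with $N\ge\lvert s\rvert$ such that $f$ is $N$-measurable. Define global gambles $h_N,h_{N-1},\dots,h_0$, with $h_k$ being $k$-measurable, by $h_N\coloneqq f$ and $h_k(x_{1:k})\coloneqq\lupprev{x_{1:k}}(h_{k+1}(x_{1:k}\cdot))$ for $0\le k<N$. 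A routine downward induction, using coherence of the local models---in particular the bounds \ref{sep coherence 5}---shows that each $h_k$ is indeed a well-defined $k$-measurable gamble with $\supnorm{h_k}\le\supnorm{f}$, so that every step of the recursion is a legitimate application of a local model to a gamble; note also that $h_N(x_{1:N}\cdot)$ is the constant gamble $f(x_{1:N})$, which is compatible with having set $h_N\coloneqq f$.

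The first main step is to prove, by downward induction on $k$ from $N$ to $0$, that $\upprev{}(f\vert x_{1:k})=h_k(x_{1:k})$ for every situation $x_{1:k}$ and every $\upprev{}\in\setofupprev{}_{1-4}(\tree{})$ satisfying \ref{P iterated general}. For $k=N$: since $f$ is $(N+1)$-measurable and $f(x_{1:N}\cdot)$ is constant, Lemma~\ref{Lemma: compatibility} gives $\upprev{}(f\vert x_{1:N})=\lupprev{x_{1:N}}(f(x_{1:N}\cdot))=f(x_{1:N})=h_N(x_{1:N})$. For $k<N$: property \ref{P iterated general} yields $\upprev{}(f\vert x_{1:k})=\upprev{}(\upprev{}(f\vert X_{1:k+1})\vert x_{1:k})$, and the induction hypothesis identifies the variable $\upprev{}(f\vert X_{1:k+1})$ with the $(k+1)$-measurable gamble $h_{k+1}$; Lemma~\ref{Lemma: compatibility} then gives $\upprev{}(h_{k+1}\vert x_{1:k})=\lupprev{x_{1:k}}(h_{k+1}(x_{1:k}\cdot))=h_k(x_{1:k})$. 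Taking $k=\lvert s\rvert$ shows that $\upprev{}(f\vert s)=h_{\lvert s\rvert}(s)$, which depends on the local models only, and hence is the same for every such $\upprev{}$.

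The second step treats an arbitrary $\upprev{}{'}\in\setofupprev{}_{1-4}(\tree{})$: rerunning the same downward induction, but with the inequality \ref{P iterated} in place of the equality \ref{P iterated general}, gives $\upprev{}{'}(f\vert x_{1:k})\le h_k(x_{1:k})$ for all $0\le k\le N$. The base case $k=N$ is still an equality by Lemma~\ref{Lemma: compatibility} (which applies to $\upprev{}{'}$ as well, since $\setofupprev{}_{1-4}(\tree{})\subseteq\setofupprev{}_{1-2}(\tree{})$). For $k<N$ one has $\upprev{}{'}(f\vert x_{1:k})\le\upprev{}{'}(\upprev{}{'}(f\vert X_{1:k+1})\vert x_{1:k})\le\upprev{}{'}(h_{k+1}\vert x_{1:k})=h_k(x_{1:k})$, where the first inequality is \ref{P iterated}, the second combines the induction hypothesis $\upprev{}{'}(f\vert X_{1:k+1})\le h_{k+1}$ with monotonicity \ref{P monotonicity}, and the final equality is Lemma~\ref{Lemma: compatibility}. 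Evaluating at $k=\lvert s\rvert$ now gives $\upprev{}(f\vert s)=h_{\lvert s\rvert}(s)\ge\upprev{}{'}(f\vert s)$, as claimed.

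The argument is essentially mechanical once the recursion is set up; the point that needs care is the bookkeeping showing that each $h_k$ stays a bounded $k$-measurable gamble, so that Lemma~\ref{Lemma: compatibility}---which is stated for gambles---can be invoked at every level, together with the observation that the only asymmetry between the exact computation of $\upprev{}$ and the upper estimate for $\upprev{}{'}$ is the replacement of the equality \ref{P iterated general} by the inequality \ref{P iterated}, with \ref{P monotonicity} doing exactly the work of transporting the level-$(k+1)$ inequality down to level $k$.
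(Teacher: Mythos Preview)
Your proof is correct and follows essentially the same approach as the paper's: both arguments use Lemma~\ref{Lemma: compatibility} to reduce each iteration step to the local models, apply \ref{P iterated general} to collapse the recursion for $\upprev{}$, and use \ref{P iterated} together with \ref{P monotonicity} to bound $\upprev{}{'}$ by the same recursion. Your presentation is a bit more explicit in naming the backward-recursion gambles $h_k$ (the paper works directly with iterated expectations), but the logical content is identical.
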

\begin{proof}
For any \(p \in \natz{}\) and any \((p + 1)\)-measurable gamble \(g\), we let \(\lupprev{X_{1:p}}(g)\) be the \(p\)-measurable gamble defined by
\(\smash{\lupprev{X_{1:p}}(g)(\omega) \coloneqq \lupprev{\omega^{p}}(g(\omega^{p} \cdot))} \text{ for all } \omega \in \Omega\).
Note that \(\smash{\lupprev{X_{1:p}}(g)}\) is indeed a gamble because of (bounded) coherence [\ref{sep coherence 5}] and the fact that \(g\) is a gamble.
Then, because both the operators \(\upprev{}\) and \(\upprev{}{'}\) satisfy \ref{P compatibility} and \ref{P conditional}, it follows from Lemma~\ref{Lemma: compatibility} that
\begin{align}\label{Eq: proposition: largest on n-measurables}
\upprev{}{'}( g \vert X_{1:p}) = \lupprev{X_{1:p}}(g) = \upprev{}( g \vert X_{1:p}).
\end{align}
Now fix any \(f \in \setoffingambles{}\) and any \(x_{1:m} \in \situations{}\).
Since \(f\) is finitary, it is \(n\)-measurable for some \(n \in \natz{}\).
We can assume that \(m+2 < n\) without loss of generality, because \(f\) is obviously also \(p\)-measurable for every \(p \geq n\).
Now, it follows from Equation~\eqref{Eq: proposition: largest on n-measurables} and the fact that $\lupprev{X_{1:n-1}}(f)$ is an $(n-1)$-measurable gamble, that \(\upprev{}{'}( f \vert X_{1:n-1}) = \upprev{}( f \vert X_{1:n-1})\) is an $(n-1)$-measurable gamble.
Subsequently, we can apply Equation~\eqref{Eq: proposition: largest on n-measurables} once more, where \(\smash{\upprev{}{'}( f \vert X_{1:n-1}) = \upprev{}( f \vert X_{1:n-1})}\) now takes the role of $g$, in order to find that \(\upprev{}{'}(\upprev{}{'}( f \vert X_{1:n-1}) \vert X_{1:n-2}) = \upprev{}(\upprev{}( f \vert X_{1:n-1}) \vert X_{1:n-2})\) is an $(n-2)$-measurable gamble.
Then we can continue in the same way, to finally obtain that 
\begin{align*}
\upprev{}{'}( \upprev{}{'}( \cdots  \upprev{}{'}(f \vert X_{1:n-1}) \cdots \vert X_{1:m+1}) \vert x_{1:m})
= \upprev{}( \upprev{}( \cdots  \upprev{}(f \vert X_{1:n-1}) \cdots \vert X_{1:m+1}) \vert x_{1:m}).
\end{align*}
By repeatedly applying \ref{P iterated} and \ref{P monotonicity}, we infer that $\upprev{}{'}( \upprev{}{'}( \cdots  \upprev{}{'}(f \vert X_{1:n-1}) \cdots \vert X_{1:m+1}) \vert x_{1:m}) 
\geq \cdots \geq \upprev{}{'}(\upprev{}{'}(f \vert X_{1:m+1}) \vert x_{1:m}) 
\geq \upprev{}{'}(f \vert x_{1:m})$.
Hence, plugging this back into the previous expression, we have that 
\begin{align*}
\upprev{}{'}(f \vert x_{1:m}) \leq
 \upprev{}( \upprev{}( \cdots  \upprev{}(f \vert X_{1:n-1}) \cdots \vert X_{1:m+1}) \vert x_{1:m}).
\end{align*}
On the other hand, since $\upprev{}$ satisfies \ref{P iterated general}, we infer that
$\upprev{}( \upprev{}( \cdots  \upprev{}(f \vert X_{1:n-1}) \cdots \vert X_{1:m+1}) \vert x_{1:m}) 
= \cdots
= \upprev{}(\upprev{}(f \vert X_{1:m+1}) \vert x_{1:m})$
$= \upprev{}(f \vert x_{1:m})$.
So indeed, we find that $\upprev{}{'}(f \vert x_{1:m})\leq\upprev{}(f \vert x_{1:m})$.
\end{proof}

\noindent
Next, we consider the domain \(\setoflimitsoffinmeasextb{} \subset \setofextvariables{}\) of all extended real variables that are bounded below and that can be written as the pointwise limit of a sequence of finitary gambles.
The following condition, together with \ref{P iterated general}, turns out to be sufficient for an upper expectation \(\upprev{}\) to be a most conservative one on the domain of \(\setoflimitsoffinmeasextb{}\) amongst all upper expectations in \(\setofupprev{}_{1-5}(\tree{})\).

\begin{enumerate}[leftmargin=*, itemsep = 6pt,ref={\upshape P\arabic*},label={\upshape P\arabic*}.,resume=Properties]
\item \label{P continuity n-meas} 
For any \(f \in \setoflimitsoffinmeasextb{}\) and any \(s \in \situations{}\), there is some sequence \(\{f_n\}_{n \in \natz{}}\) of \(n\)-measurable gambles that is uniformly bounded below and that converges pointwise to \(f\), such that moreover
\(\limsup_{n \to +\infty} \upprev{}(f_n \vert s) \leq \upprev{}(f \vert s)\).
\end{enumerate}
Note that, for an upper expectation $\upprev{}\in\setofupprev{}_{1-5}(\tree{})$, the inequality \(\limsup_{n \to +\infty} \upprev{}(f_n \vert s) \leq \upprev{}(f \vert s)\) in \ref{P continuity n-meas} is in fact an equality because of \ref{P continuity}.

\noindent

\begin{proposition}\label{Prop: largest on gamble limits of n-measurables}
Consider any \(\upprev{} \in \setofupprev{}_{1-5}(\tree{})\) that satisfies {\normalfont\ref{P iterated general}} and {\normalfont\ref{P continuity n-meas}}.
Then, for all \(\upprev{}{'} \in \setofupprev{}_{1-5}(\tree{})\), we have that 
\begin{align*}
\upprev{}(f \vert s) \geq \upprev{}{'}(f \vert s) \text{ for all } f \in \setoflimitsoffinmeasextb{} \text{ and all } s \in \situations{}.
\end{align*}
\end{proposition}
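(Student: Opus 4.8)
The plan is to chain together three ingredients: the special approximating sequence furnished by \ref{P continuity n-meas} for \(\upprev{}\), the continuity property \ref{P continuity} applied to \(\upprev{}{'}\), and the term-by-term domination on finitary gambles already established in Proposition~\ref{proposition: largest on n-measurables}.

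First I would fix an arbitrary \(f \in \setoflimitsoffinmeasextb{}\) and \(s \in \situations{}\), and invoke \ref{P continuity n-meas} for \(\upprev{}\) to obtain a sequence \(\{f_n\}_{n \in \natz{}}\) of \(n\)-measurable gambles that is uniformly bounded below, converges pointwise to \(f\), and satisfies \(\limsup_{n \to +\infty} \upprev{}(f_n \vert s) \leq \upprev{}(f \vert s)\). Next, since \(\upprev{}{'} \in \setofupprev{}_{1-5}(\tree{})\) it satisfies \ref{P continuity}; applying this to the very same sequence \(\{f_n\}_{n \in \natz{}}\)---which is an admissible test sequence because \(n\)-measurable gambles are finitary gambles and the sequence is uniformly bounded below with pointwise limit \(f\)---yields \(\limsup_{n \to +\infty} \upprev{}{'}(f_n \vert s) \geq \upprev{}{'}(f \vert s)\).

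It then remains to compare the two limits superior. Here I would use Proposition~\ref{proposition: largest on n-measurables}: both \(\upprev{}\) and \(\upprev{}{'}\) lie in \(\setofupprev{}_{1-4}(\tree{})\), and \(\upprev{}\) additionally satisfies \ref{P iterated general}, so that proposition gives \(\upprev{}{'}(f_n \vert s) \leq \upprev{}(f_n \vert s)\) for every \(n \in \natz{}\) (each \(f_n\) being a finitary gamble). Taking limits superior preserves this inequality, and stringing the estimates together gives
\begin{align*}
\upprev{}{'}(f \vert s)
\leq \limsup_{n \to +\infty} \upprev{}{'}(f_n \vert s)
\leq \limsup_{n \to +\infty} \upprev{}(f_n \vert s)
\leq \upprev{}(f \vert s),
\end{align*}
which is exactly the desired conclusion.

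None of these three steps is genuinely difficult, so there is no real obstacle here; the only point requiring care is to observe that the single sequence \(\{f_n\}_{n \in \natz{}}\) produced by \ref{P continuity n-meas} for \(\upprev{}\) is simultaneously an admissible test sequence for \ref{P continuity} applied to \(\upprev{}{'}\) and for the term-by-term bound of Proposition~\ref{proposition: largest on n-measurables}---which it is, precisely because every \(n\)-measurable gamble is a finitary gamble. The substantive work has, in effect, already been done in Proposition~\ref{proposition: largest on n-measurables} and in the formulation of \ref{P continuity n-meas}.
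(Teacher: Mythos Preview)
Your proof is correct and follows essentially the same approach as the paper's own proof: both use the approximating sequence from \ref{P continuity n-meas} for \(\upprev{}\), apply \ref{P continuity} to \(\upprev{}{'}\) along that same sequence, and compare the two limits superior via Proposition~\ref{proposition: largest on n-measurables}. The only cosmetic difference is that the paper additionally remarks that the inequality from \ref{P continuity n-meas} combined with \ref{P continuity} for \(\upprev{}\) is actually an equality, but this observation is not needed for the argument and you rightly omit it.
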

\begin{proof}
Fix any \(f \in \setoflimitsoffinmeasextb{}\), any \(s \in \situations{}\) and any \(\upprev{}{'} \in \setofupprev{}_{1-5}(\tree{})\).
According to \ref{P continuity n-meas}, there is some sequence \(\{f_n\}_{n \in \natz{}}\) of \(n\)-measurable gambles that is uniformly bounded below and that converges pointwise to \(f\) in such a way that \(\limsup_{n \to +\infty} \upprev{}(f_n \vert s) \leq \upprev{}(f \vert s)\) and therefore, due to \ref{P continuity}, also that
\begin{equation}\label{Eq: prop largest on Vblim 1}
\limsup_{n \to +\infty} \upprev{}(f_n \vert s) = \upprev{}(f \vert s).\vspace{-2pt}
\end{equation}
Because all \(f_n\) are finitary gambles and both \(\upprev{}\) and \(\upprev{}{'}\) are upper expectations in \(\setofupprev{}_{1-4}(\tree{})\), with \(\upprev{}\) additionally satisfying \ref{P iterated general}, we can apply Proposition \ref{proposition: largest on n-measurables} to find that 
\begin{equation}\label{Eq: prop largest on Vblim 2}
\limsup_{n \to +\infty} \upprev{}{'}(f_n \vert s) 
\leq \limsup_{n \to +\infty} \upprev{}(f_n \vert s).
\end{equation}
Furthermore, since \(\{f_n\}_{n \in \natz{}}\) is a sequence of finitary gambles that is uniformly bounded below and that converges pointwise to \(f\), \ref{P continuity} implies that
$\upprev{}{'}(f \vert s) 
\leq \limsup_{n \to +\infty} \upprev{}{'}(f_n \vert s)$.
Combining this with Equations \eqref{Eq: prop largest on Vblim 1} and \eqref{Eq: prop largest on Vblim 2}, we find that, indeed, \(\upprev{}{'}(f\vert s) \leq \upprev{}(f\vert s)\). 
\end{proof}

\noindent
Finally, we consider the entire domain \(\setofextvariables{}\).
Now, in order for an upper expectation on \(\setofextvariables{}\) to be the most conservative one, it suffices to additionally impose the following property, as we will show presently.

\begin{enumerate}[leftmargin=*,ref={\upshape P\arabic*},label={\upshape P\arabic*}.,resume=Properties]
\item \label{P natural extension}  
For any \(f \in \setofextvariables{}\) and any \(s \in \situations{}\),
\begin{align*}
	\upprev{} (f \vert s) &= \inf{\Big\{ \upprev{}(g \vert s) \colon g \in \setoflimitsoffinmeasextb{} \text{ and }  g \geq f  \Big\}}.
\end{align*}
\end{enumerate}

\begin{theorem}\label{Theorem: smallest vovk general}
Consider any \(\upprev{} \in \setofupprev{}_{1-5}(\tree{})\) that satisfies \ref{P iterated general}, \ref{P continuity n-meas} and \ref{P natural extension}.
Then, for all \(\upprev{}{'} \in \setofupprev{}_{1-5}(\tree{})\), we have that 
\begin{align*}
\upprev{}(f \vert s) \geq \upprev{}{'}(f \vert s) \text{ for all } f \in \setofextvariables{} \text{ and all } s \in \situations{}.
\end{align*}
\end{theorem}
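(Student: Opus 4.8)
The plan is to reduce the claim on the full domain $\setofextvariables{}$ to the result already established on $\setoflimitsoffinmeasextb{}$ in Proposition~\ref{Prop: largest on gamble limits of n-measurables}, using \ref{P natural extension} as the bridge. Fix $f \in \setofextvariables{}$, $s \in \situations{}$ and an arbitrary $\upprev{}{'} \in \setofupprev{}_{1-5}(\tree{})$. The goal is to prove that $\upprev{}{'}(f \vert s) \leq \upprev{}(g \vert s)$ for every $g \in \setoflimitsoffinmeasextb{}$ with $g \geq f$; taking the infimum over all such $g$ and invoking \ref{P natural extension} for $\upprev{}$ then yields $\upprev{}{'}(f \vert s) \leq \inf\{\upprev{}(g \vert s) \colon g \in \setoflimitsoffinmeasextb{}, g \geq f\} = \upprev{}(f \vert s)$, which is exactly the desired inequality.

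For the pointwise bound I would argue in two steps. First, since $\upprev{}{'} \in \setofupprev{}_{1-5}(\tree{})$ it satisfies \ref{P monotonicity}, so from $g \geq f$ we get $\upprev{}{'}(f \vert s) \leq \upprev{}{'}(g \vert s)$. Second, $g$ belongs to $\setoflimitsoffinmeasextb{}$, $\upprev{}$ lies in $\setofupprev{}_{1-5}(\tree{})$ and satisfies \ref{P iterated general} and \ref{P continuity n-meas}, and $\upprev{}{'}$ lies in $\setofupprev{}_{1-5}(\tree{})$, so Proposition~\ref{Prop: largest on gamble limits of n-measurables} applies to $g$ and gives $\upprev{}{'}(g \vert s) \leq \upprev{}(g \vert s)$. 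Chaining the two inequalities gives $\upprev{}{'}(f \vert s) \leq \upprev{}(g \vert s)$.

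There is no real obstacle here: the argument is just an assembly of \ref{P monotonicity} for $\upprev{}{'}$, Proposition~\ref{Prop: largest on gamble limits of n-measurables}, and \ref{P natural extension} for $\upprev{}$. The only bookkeeping worth a remark is that the set $\{g \in \setoflimitsoffinmeasextb{} \colon g \geq f\}$ is non-empty, so that the reduction through \ref{P natural extension} is genuine: the constant variable $+\infty$ is bounded below and is the pointwise limit of the sequence of finitary gambles $\{n\}_{n \in \natz{}}$, hence lies in $\setoflimitsoffinmeasextb{}$ and dominates $f$. (Even without this observation, an empty infimand would give the value $+\infty$ and the inequality trivially.) Likewise, the hypotheses needed to invoke Proposition~\ref{Prop: largest on gamble limits of n-measurables} are precisely those imposed on $\upprev{}$ in the statement, so no further verification is required.
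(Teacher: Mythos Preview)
Your proof is correct and follows essentially the same approach as the paper's: both use \ref{P monotonicity} for $\upprev{}{'}$ to pass from $f$ to a dominating $g\in\setoflimitsoffinmeasextb{}$, then Proposition~\ref{Prop: largest on gamble limits of n-measurables} to compare $\upprev{}{'}(g\vert s)$ with $\upprev{}(g\vert s)$, and finally \ref{P natural extension} for $\upprev{}$ to take the infimum over~$g$. Your additional remark on non-emptiness of the dominating set is a nice touch, though as you note it is not strictly needed.
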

\begin{proof}
Fix any \(f \in \setofextvariables{}\), any \(s \in \situations{}\) and any \(\upprev{}{'} \in \setofupprev{}_{1-5}(\tree{})\).
According to \ref{P natural extension}, we have that
\begin{align*}
	\upprev{}(f \vert s) &= \inf{\Big\{ \upprev{}(g \vert s) \colon g \in \setoflimitsoffinmeasextb{} \text{ and }  g \geq f  \Big\}}.
\end{align*}
Then, using Proposition \ref{Prop: largest on gamble limits of n-measurables}, we get
\begin{align*}
\inf{\Big\{ \upprev{}{'}(g \vert s) \colon g \in \setoflimitsoffinmeasextb{} \text{ and }  g \geq f  \Big\}} 
\leq \inf{\Big\{ \upprev{}(g \vert s) \colon g \in \setoflimitsoffinmeasextb{} \text{ and }  g \geq f  \Big\}} = \upprev{} (f \vert s).
\end{align*}
Since, for any \(g \in \setoflimitsoffinmeasextb{}\) such that \(f \leq g\), we have that \(\upprev{}{'}(f\vert s) \leq \upprev{}{'}(g\vert s)\) because $\upprev{}{'}$ satisfies \ref{P monotonicity}, it follows that
\begin{align*}
\upprev{}{'}(f \vert s) \leq \inf{\Big\{ \upprev{}{'}(g \vert s) \colon g \in \setoflimitsoffinmeasextb{} \text{ and }  g \geq f  \Big\}} \leq \upprev{} (f \vert s),
\end{align*}
which completes the proof.
\end{proof}

\noindent
We conclude that, according to Theorem \ref{Theorem: smallest vovk general}, if there is some upper expectation \(\upprev{} \in \setofupprev{}_{1-5}(\tree{})\) that satisfies \ref{P iterated general}, \ref{P continuity n-meas} and \ref{P natural extension}, it must be the unique most conservative upper expectation in \(\setofupprev{}_{1-5}(\tree{})\), and it must therefore be the global belief model \(\upprev{\mathrm{A}}\) that we are after.
So it now only remains to show that there is at least one---then necessarily unique---global upper expectation in \(\setofupprev{}_{1-5}(\tree{})\) that additionally satisfies \ref{P iterated general}, \ref{P continuity n-meas} and \ref{P natural extension}.

\section{Game-Theoretic Upper Expectations}\label{Sect: Game-theoretic upper exp}

In this section, we show that a particular version of the game-theoretic upper expectation developed by Shafer and Vovk \cite{Shafer:2005wx,Vovk2019finance} belongs to \(\setofupprev{}_{1-5}(\tree{})\) and furthermore satisfies \ref{P iterated general}, \ref{P continuity n-meas} and \ref{P natural extension}, which will then---because of Theorem~\ref{Theorem: smallest vovk general}---imply the existence of the unique \(\upprev{\mathrm{A}}\) we are in pursuit of.
This game-theoretic upper expectation relies on the concept of a \emph{supermartingale},\footnote{Shafer and Vovk drew inspiration from Jean Ville, whose work \cite{ville1939etudecritique} constitutes the first major steps towards a theory of probability where (super)martingales, instead of probability measures, function as the fundamental, primary objects.} which is a capital process---the evolution of a subject's capital---that is obtained by betting against a `forecasting system'. 
The forecasting system---called `Forecaster' in Shafer and Vovk's framework---determines for each situation a collection of allowable bets that a subject---called `Skeptic' in Shafer and Vovk's framework---can choose from.
These allowable bets define Forecaster's uncertainty model and are also captured by our notion of an imprecise probability tree.
We should point out though that Shafer and Vovk also allow for more general settings where local models need not be boundedly coherent, where state spaces can be infinite and where Forecaster his commitments---that is, the allowable bets for Skeptic---in each situation need not be announced beforehand, but may also depend on previous moves by Skeptic. 
In that respect, our discussion here will be more particular and limited.

As before, we start from an imprecise probability tree that specifies a local coherent upper expectation \(\lupprev{s}\) on \(\setofgengambles{}(\statespace{})\) for every situation \(s \in \situations{}\).
However, in their most recent work~\cite[Part~II]{Vovk2019finance}, Shafer and Vovk start from local models $\upprev{s}$ that are defined on the extended domain $\setofgenextvariables{}(\statespace{})$ of all extended real local variables, and they require these local models $\upprev{s}$ to satisfy a modified version of the bounded coherence axioms \ref{sep coherence 1}--\ref{sep coherence 3}, generalised to extended real variables.
Concretely, for any $s\in\situations$, they impose the following properties on the upper expectation $\upprev{s}$ on $\setofgenextvariables{}(\statespace{})$ that models the corresponding local beliefs:
\begin{enumerate}[leftmargin=*,ref={\upshape E\arabic*},label={\upshape E\arabic*}.,series=vovklocalcoherence]
\item \label{vovk local coherence: const is const} 
$\upprev{s}(c)=c$ for all $c\in\reals{}$; 
\item \label{vovk local coherence: sublinearity} 
$\upprev{s}(f+g)\leq\upprev{s}(f) + \upprev{s}(g)$ for all $f,g\in\setofgenextvariables(\statespace)$;
\item \label{vovk local coherence: homog for positive lambda}
$\upprev{s}(\lambda f)=\lambda \upprev{s}(f)$ for all $\lambda\in\posreals$ and all $f\in\setofgenextvariables(\statespace)$;
\item \label{vovk local coherence: monotonicity}
$f \leq g \, \Rightarrow \, \upprev{s}(f)\leq\upprev{s}(g)$ for all $f, g\in\setofgenextvariables(\statespace)$.
\item \label{vovk local coherence: continuity wrt non-negative functions}
$\lim_{n\to+\infty} \upprev{s}(f_n)=\upprev{s}\left( \lim_{n\to+\infty} f_n \right)$ for any non-decreasing and non-negative sequence $\{f_n\}_{n\in\natz}$ in $\setofgenextvariables(\statespace)$.
\end{enumerate}
Hence, in order to place ourselves squarely in their framework, we need to extend the domain of our local models $\lupprev{s}$ from $\setofgengambles(\statespace)$ to $\setofgenextvariables(\statespace)$ in such a way that they satisfy \ref{vovk local coherence: const is const}--\ref{vovk local coherence: continuity wrt non-negative functions}.
For any $f\in\setofgenextvariables{}(\statespace{})$ and any $c\in\reals$, let $f^{\wedge c}$ and $f^{\vee c}$ be the variables in $\setofgenextvariables{}(\statespace{})$ respectively defined by $f^{\wedge c}(x) \coloneqq \min\{f(x),c\}$ and $f^{\vee c}(x) \coloneqq \max\{f(x),c\}$ for all $x\in\statespace$.
We propose the following two continuity properties for an upper expectation $\extlupprev{s}$ on $\setofgenextvariables(\statespace)$ that extends a coherent upper expectation $\smash{\lupprev{s}}$ on $\setofgengambles(\statespace)$: 
\begin{enumerate}[leftmargin=*,ref={\upshape{C\arabic*}},label={\upshape{}C\arabic*}.,itemsep=3pt, resume=sepcoherence ]
\item \label{upper cuts} \(\extlupprev{s}(f) = \lim_{c\to+\infty} \extlupprev{s}(f^{\wedge c})\) for all \(f\in\setofgenextvariablesb(\statespace{})\);
\item \label{lower cuts} \(\extlupprev{s}(f)=\lim_{c\to-\infty} \extlupprev{s}(f^{\vee c})\) for all \(f\in\setofgenextvariables(\statespace{})\);
\end{enumerate}
Properties \ref{upper cuts} and \ref{lower cuts} are respectively called \emph{continuity with respect to upper} and \emph{lower cuts} (or, alternatively, bounded above and below support).
According to \cite[Proposition~10~(ii)--(iii)]{Tjoens2020FoundationsARXIV} such an extension $\extlupprev{s}$ always exists and is moreover unique.
The upper expectation $\smash{\extlupprev{s}}$ is also an appropriate extension, in the sense that it satisfies \ref{vovk local coherence: const is const}--\ref{vovk local coherence: continuity wrt non-negative functions}; this follows from \cite[Proposition~10]{Tjoens2020FoundationsARXIV} together with \cite[Proposition~1]{Tjoens2020FoundationsARXIV}.\footnote{To see this, note that the definition of an upper expectation in Reference \cite{Tjoens2020FoundationsARXIV} is different from the one we adopt here.}
Hence, in order to adhere to Shafer and Vovk's approach, we can---and will---adopt this particular extension $\extlupprev{s}$ of $\lupprev{s}$ for each situation $s \in\situations{}$ in the following game-theoretic reasoning.

A map \(\martingale{}\colon\situations{}\to\extreals{}\) is called a \emph{supermartingale} if it satisfies \(\smash{\extlupprev{s}(\martingale{}(s \cdot))\leq\martingale{}(s)}\) for all \(s \in \situations{}\), where \(\martingale{}(s \cdot)\) 
denotes the variable in \(\setofgenextvariables{}(\statespace{})\) that assumes the value \(\martingale{}(s x)\) for all \(x \in \statespace{}\).
As mentioned earlier, a supermartingale \(\martingale{}\) can be interpreted as a capital process that results from betting against a forecasting system. 
In order for this to make sense, such an interpretation will only be attached to supermartingales that are \emph{bounded below}, meaning that there is some real \(c\) such that \(\martingale{}(s) \geq c\) for all \(s \in \situations{}\).
This represents the constraint that we can never borrow an infinite or even unbounded amount of money.
To see that a bounded below supermartingale $\martingale{}$ can indeed be interpreted as described above, we first need to know how a forecasting system allows us to play.
In our case, the allowable bets in any given situation \(s\in\situations{}\) are those variables \(f\in\setofgenextvariablesb{}(\statespace{})\) for which \(\extlupprev{s}(f) \leq 0\)---we will come back to this shortly.
Now, if \(\martingale{}(s)\) is real, then the condition that \(\extlupprev{s}(\martingale{}(s \cdot)) \leq \martingale{}(s)\) is equivalent to the condition that \(\smash{\extlupprev{s}(\martingale{}(s \cdot) - \martingale{}(s)) \leq 0}\) because \(\smash{\extlupprev{s}}\) satisfies constant additivity on the domain \(\setofgenextvariablesb{}(\statespace{})\); 
see \cite[Proposition~3(E8)]{Tjoens2020FoundationsARXIV}.
Hence, the incremental change \(\martingale{}(s \cdot) - \martingale{}(s)\) of the capital process \(\martingale{}\) in the situation~\(s\) indeed represents an allowable bet.
If on the other hand \(\martingale{}(s) = +\infty\) (the case where \(\martingale{}(s) = -\infty\) is impossible because we assumed \(\martingale{}\) to be bounded below), the condition that \(\extlupprev{s}(\martingale{}(s \cdot)) \leq \martingale{}(s)\) does not impose any constraints on \(\martingale{}(s \cdot)\).
The interpretation that the increment \(\martingale{}(s \cdot)-\martingale{}(s)\) represents an allowable bet is, in this case, somewhat questionable.
However, one could argue that, since the process $\martingale{}$ has reached the maximum possible value---that is, $+\infty$---in the situation $s$, it cannot increase any further, hence there is no need for any constraints on the process' future value.

That the allowable bets \(f\) in some situation \(s \in \situations{}\) are characterised by the condition that \(\extlupprev{s}(f) \leq 0\), agrees with the behavioural interpretation of (boundedly) coherent upper expectations.
To see this, we limit ourselves to the domain \(\setofgengambles{}(\statespace{})\) and recall that \(\extlupprev{s}(f)=\lupprev{s}(f)\), for any \(f \in \setofgengambles{}(\statespace{})\), can then be interpreted as a subject's infimum selling price for the gamble \(f\).
So the subject, being the forecasting system in this case, is willing to offer the gambles that have negative\footnote{In the interest of brevity, we ignore in our discussion of the interpretation the special case where we have zero upper expectation. See \cite{deCooman:2008km} for more details about this special case.} upper expectation.
We can then take the subject up on his commitments by accepting its offer and selecting one such gamble.
Moreover, since the upper expectations \(\lupprev{s}\) are assumed to be (boundedly) coherent, the associated sets of available gambles will also satisfy certain rationality axioms.
We refer to an earlier paper by one of us \cite{deCooman:2008km} for a more elaborate discussion of how Walley's behavioural approach can be related (in a slightly different context) to the game-theoretic one proposed by Shafer and Vovk.

Our definition of the global game-theoretic upper expectation---and the one considered by Shafer and Vovk---will be based on bounded below supermartingales, not only because it allows for a sensible interpretation as we have explained above, but also because of more technical reasons, which we discuss in \cite[Section~8]{Tjoens2020FoundationsARXIV}.
So let \(\setofextsupmartb{}\) be the set of all such bounded below supermartingales.
For any \(\martingale{} \in \setofextsupmartb{}\), we then use \(\liminf \martingale{}\) to denote the extended real global variable that assumes the value \(\liminf_{n \to +\infty} \martingale{}(\omega^n)\) in each \(\omega \in \Omega\).
We also write, for any situation $s\in\situations{}$ and any two \(f\) and \(g\) in \(\setofextvariables{}\), that \(f \geq_s g\) if \(f(\omega) \geq g(\omega)\) for all paths \(\omega \in \Gamma(s)\).
The global game-theoretic upper expectation \(\upprevvovkk{} \colon \setofextvariables{} \times \situations{} \to \extreals{}\) can now be defined as
\begin{align*}
\upprevvovkk{}(f \vert s) \coloneqq \inf \Big\{ \martingale{}(s) \colon \martingale{} \in \setofextsupmartb{} \text{ and } \liminf \martingale{} \geq_s f \Big\} \text{ for all } f \in \setofextvariables{} \text{ and all } s \in \situations{}.
\end{align*}
The corresponding game-theoretic lower expectation $\lowprevvovkk{}$ is again defined by conjugacy; so $\lowprevvovkk{}(f\vert s) \coloneqq - \upprevvovkk{}(- f \vert s)$ for all $f\in\setofextvariables{}$ and all $s\in\situations{}$.
For a concrete and detailed motivation and interpretation for the operator $\upprevvovkk{}$, we refer to \cite{Shafer:2005wx,Vovk2019finance,Tjoens2020FoundationsARXIV}.
Essentially, the upper expectation \(\upprevvovkk{}(f \vert x_{1:n})\) can once more be seen as an infimum selling price for a variable \(f \in \setofextvariables{}\) conditional on the fact that we have observed some history \(X_1 = x_1, X_2 = x_2, \cdots, X_n = x_n\) of the process.
It is based on the assumption that we should agree to sell $f$ for any starting capital \(\martingale{}(x_{1:n})\) such that, by appropriately betting against the forecasting system from \(x_{1:n}\) onwards, we manage to end up with a higher capital than what we would receive from \(f\), regardless of the path \(\omega \in \Omega\) taken by the process.
Indeed, it is reasonable to state that such a starting capital \(\martingale{}(x_{1:n})\) is worth more than the uncertain (possibly negative) payoff corresponding to \(f\).
The global game-theoretic upper expectation \(\upprevvovkk{}(f \vert x_{1:n})\) is the infimum over all such starting capitals.

As we will briefly discuss in Section~\ref{Sect: discussion}, the global game-theoretic upper expectation \(\upprevvovkk{}\) satisfies various, remarkably powerful properties.
In particular, it satisfies \ref{P compatibility}--\ref{P continuity}, \ref{P iterated general}, \ref{P continuity n-meas} and \ref{P natural extension}, which allows us to state our main result: that $\upprevvovkk{}$ is the unique most conservative upper expectation \(\upprev{\mathrm{A}}\) in \(\setofupprev{}_{1-5}(\tree{})\).
Our proofs for these results are heavily based on our work in \cite{Tjoens2020FoundationsARXIV}.
The setting there is entirely the same as the one described in this section, with the exception that we generally do not impose \ref{lower cuts} on the local models in \cite{Tjoens2020FoundationsARXIV}.
It will soon be clarified why we have made this distinction; for the moment, it suffices to see that the setting in \cite{Tjoens2020FoundationsARXIV} is more general and therefore, that the results in \cite{Tjoens2020FoundationsARXIV} obviously remain valid here.

\begin{proposition}\label{Prop: vovk satisfies P1-P7}
\(\upprevvovkk{}\) is an element of \/ \(\setofupprev{}_{1-5}(\tree{})\) and furthermore satisfies \ref{P iterated general}, \ref{P continuity n-meas} and \ref{P natural extension}.
\end{proposition}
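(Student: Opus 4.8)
The goal is to verify that the game-theoretic upper expectation $\upprevvovkk{}$ satisfies the nine properties in question, namely \ref{P compatibility}--\ref{P continuity}, \ref{P iterated general}, \ref{P continuity n-meas} and \ref{P natural extension}. The overall strategy is to reduce each of these, as much as possible, to facts already established in \cite{Tjoens2020FoundationsARXIV}, exploiting that the setting there coincides with the present one except that \ref{lower cuts} is not imposed on the local models. Since imposing the extra property \ref{lower cuts} only restricts the class of allowable imprecise probability trees $\tree{}$, every statement proved in \cite{Tjoens2020FoundationsARXIV} for general local models---in particular, the behaviour and properties of $\upprevvovkk{}$---remains available here. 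So the bulk of the proof will consist of citing the corresponding results in \cite{Tjoens2020FoundationsARXIV} and, where a property is stated there in a slightly different (more general or differently phrased) form, explaining how it specialises to what we need.

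**Step-by-step.** First I would dispatch \ref{P compatibility}: evaluate $\upprevvovkk{}(f(X_{n+1})\vert \sit{})$ for $f\in\setofgengambles{}(\statespace{})$ directly from the definition, using that the cheapest bounded-below supermartingale whose $\liminf$ dominates $f(X_{n+1})$ on $\Gamma(\sit{})$ can be taken constant after time $n+1$, so its value at $\sit{}$ reduces to $\extlupprev{\sit{}}(f(\sit{}\cdot)) = \lupprev{\sit{}}(f)$, where the last equality holds because $\extlupprev{\sit{}}$ extends $\lupprev{\sit{}}$ on $\setofgengambles{}(\statespace{})$. Next, \ref{P conditional} follows from the fact that membership of a supermartingale in the constraint set $\{\martingale{}\in\setofextsupmartb{}\colon \liminf\martingale{}\geq_s f\}$ depends only on the behaviour of $f$ on $\Gamma(s)$, so the infimum is unchanged upon replacing $f$ by $f\indica{s}$. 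Property \ref{P monotonicity} is immediate from the definition, since enlarging $f$ shrinks the constraint set. The substantive items are \ref{P iterated} (and its strengthening \ref{P iterated general}), \ref{P continuity}, \ref{P continuity n-meas} and \ref{P natural extension}: here I would invoke the corresponding results of \cite{Tjoens2020FoundationsARXIV}. Concretely, the law of iterated upper expectations \ref{P iterated general} is a known structural property of game-theoretic upper expectations built from supermartingales (it follows from a ``gluing'' argument that concatenates a supermartingale on $\Gamma(s)$ with locally optimal continuations, using the coherence/continuity properties \ref{vovk local coherence: const is const}--\ref{vovk local coherence: continuity wrt non-negative functions} of the extended local models); \ref{P iterated} is then its immediate consequence. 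Property \ref{P continuity} is the game-theoretic upward/downward convergence result of \cite{Tjoens2020FoundationsARXIV}, applied to a uniformly-bounded-below sequence of finitary gambles; it is exactly here that continuity with respect to lower cuts \ref{lower cuts}---which is why we imposed it---enters, ensuring the convergence theorem applies without a boundedness-above hypothesis. Property \ref{P continuity n-meas} is obtained by producing, for $f\in\setoflimitsoffinmeasextb{}$, an explicit approximating sequence: take any sequence $\{g_n\}$ of finitary gambles with $g_n\to f$ witnessing $f\in\setoflimitsoffinmeasextb{}$, and ``upper-truncate and running-max'' it into $n$-measurable gambles $f_n$ still converging to $f$ and uniformly bounded below, for which $\limsup_n\upprevvovkk{}(f_n\vert s)\leq\upprevvovkk{}(f\vert s)$ holds---again this is one of the main technical lemmas of \cite{Tjoens2020FoundationsARXIV}. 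Finally, \ref{P natural extension} is the statement that $\upprevvovkk{}$ on all of $\setofextvariables{}$ is the ``outer'' extension of its restriction to $\setoflimitsoffinmeasextb{}$; one inequality is \ref{P monotonicity}, and the other---that $\upprevvovkk{}(f\vert s)\geq\inf\{\upprevvovkk{}(g\vert s)\colon g\in\setoflimitsoffinmeasextb{},\, g\geq f\}$---follows because for any bounded-below supermartingale $\martingale{}$ with $\liminf\martingale{}\geq_s f$, the variable $g\coloneqq\liminf\martingale{}$ (suitably modified off $\Gamma(s)$) lies in $\setoflimitsoffinmeasextb{}$, dominates $f$ on $\Gamma(s)$, and satisfies $\upprevvovkk{}(g\vert s)\leq\martingale{}(s)$; taking the infimum over $\martingale{}$ closes the gap. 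Each of these last four points should be pinned to the precise proposition in \cite{Tjoens2020FoundationsARXIV}.

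**Main obstacle.** The genuine difficulty is not any single verification but the bookkeeping around the extension $\extlupprev{s}$ and the compatibility of the conventions in \cite{Tjoens2020FoundationsARXIV} with those used here. The results in \cite{Tjoens2020FoundationsARXIV} are proved under a definition of ``upper expectation'' that differs from the present one (as the footnote after \ref{lower cuts} already flags), so some care is needed to transport statements such as the continuity theorem and the $n$-measurable approximation lemma without introducing a hidden boundedness-above assumption. The key point making this work is that $\extlupprev{s}$ was chosen precisely to satisfy both \ref{upper cuts} and \ref{lower cuts}, hence \ref{vovk local coherence: const is const}--\ref{vovk local coherence: continuity wrt non-negative functions}, so the hypotheses of the cited results are met; I would therefore devote the opening of the proof to stating clearly which reference covers which property, and only then carry out the few short direct arguments (for \ref{P compatibility}, \ref{P conditional}, \ref{P monotonicity}) that are genuinely self-contained.
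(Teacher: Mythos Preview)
Your proposal is correct and follows essentially the same approach as the paper: both establish the nine properties by citing the corresponding results from \cite{Tjoens2020FoundationsARXIV}, with short direct arguments for \ref{P conditional} (and, in your case, also \ref{P compatibility} and \ref{P monotonicity}, which the paper instead covers by citation).

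One factual correction, though: you claim that \ref{lower cuts} is needed for \ref{P continuity} (``it is exactly here that continuity with respect to lower cuts \ref{lower cuts}\dots enters''). This is not so. The paper states explicitly that the results in \cite{Tjoens2020FoundationsARXIV}---including the Fatou-type inequality \cite[Corollary~25]{Tjoens2020FoundationsARXIV} from which \ref{P continuity} follows---are proved \emph{without} imposing \ref{lower cuts} on the local models; the setting there is strictly more general. Property \ref{lower cuts} is imposed in the present paper for a different reason (compatibility of local and global models, discussed in Section~\ref{Sect: discussion}), not to make any of the cited technical results go through. Also, one bookkeeping point you do not mention but the paper does: the definition of $\setoflimitsoffinmeasextb{}$ in \cite{Tjoens2020FoundationsARXIV} allows limits of \emph{extended} real finitary variables, so to invoke \cite[Theorem~32 and Proposition~28]{Tjoens2020FoundationsARXIV} for \ref{P continuity n-meas} and \ref{P natural extension} one needs \cite[Proposition~30]{Tjoens2020FoundationsARXIV} to see that the two definitions of $\setoflimitsoffinmeasextb{}$ coincide.
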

\begin{proof}
We prove that \(\upprevvovkk{}\) satisfies \ref{P compatibility}--\ref{P continuity}, \ref{P iterated general}, \ref{P continuity n-meas} and \ref{P natural extension}.
Axiom \ref{P compatibility} follows immediately from \cite[Proposition~14]{Tjoens2020FoundationsARXIV} which, in particular, says that \(\upprev{} (f \vert \sit{}) = \lupprev{\sit{}}(f(\sit{} \cdot))\) for any situation \(\sit{} \in \situations\) and any \((n+1)\)-measurable gamble \(f\).
Indeed, for any \(h \in \setofgengambles{}(\statespace{})\), \(h(X_{n+1})\) is an \mbox{\((n+1)\)}-measurable gamble, so it follows from \cite[Proposition~14]{Tjoens2020FoundationsARXIV} that \(\smash{\upprevvovkk{} (h(X_{n+1}) \vert \sit{}) = \lupprev{\sit{}}(h)}\).
To prove \ref{P conditional}, observe that \(\liminf \martingale{} \geq_s f\) if and only if \(\liminf \martingale{} \geq_s f\indica{s}\) for all \(f \in \smash{\setofextvariables{}}\) and all \(\martingale{} \in \smash{\setofextsupmartb{}}\).
The desired equality therefore follows directly from the definition of \(\upprevvovkk{}\).
\ref{P iterated general}, and hence also \ref{P iterated}, follows immediately from \cite[Theorem~15]{Tjoens2020FoundationsARXIV}.
Property~\ref{P monotonicity} follows from \cite[Proposition~13(V4)]{Tjoens2020FoundationsARXIV}, which says that $\upprevvovkk{}$ satisfies the stronger property that \(\upprev{}(f \vert s) \leq \upprev{}(g \vert s)\) if $f \leq_s g$ for all \(f,g \in \setofextvariables{}\) and all \(s \in \situations{}\).

Furthermore, \ref{P continuity n-meas} and \ref{P natural extension} follow from, respectively, \cite[Theorem~32]{Tjoens2020FoundationsARXIV} and \cite[Proposition~28]{Tjoens2020FoundationsARXIV}.
Note that, in these results, the definition of \(\setoflimitsoffinmeasextb{}\) seems to be slightly more general, since it there also includes pointwise limits of possibly \emph{extended} real finitary variables. 
However, according to \cite[Proposition~30]{Tjoens2020FoundationsARXIV}, any such limit of extended real finitary variables is in fact also a pointwise limit of $n$-measurable, and therefore finitary, gambles.
So both our definitions of \(\setoflimitsoffinmeasextb{}\) are equivalent, therefore indeed allowing us to apply the results in \cite{Tjoens2020FoundationsARXIV} here.

Finally, Property \ref{P continuity} follows from \cite[Corollary~25]{Tjoens2020FoundationsARXIV}, which says that, for any sequence \(\{f_n\}_{n \in \natz{}}\) in \(\setofextvariablesb{}\) that is uniformly bounded below,
\(\smash{\upprevvovkk{}(\liminf_{n \to +\infty} f_n \vert s) \leq \liminf_{n \to +\infty} \upprevvovkk{}(f_n \vert s)}\).
Indeed, in the special case that \(\{f_n\}_{n \in \natz{}}\) is a sequence of finitary gambles that is uniformly bounded below and converges pointwise to some variable \(f \in \setofextvariablesb{}\), this implies that
\(\upprevvovkk{}(f \vert s) \leq \liminf_{n \to +\infty} \upprevvovkk{}(f_n \vert s) \leq \limsup_{n \to +\infty} \upprevvovkk{}(f_n \vert s)\).
\end{proof}

\begin{theorem}\label{theorem: Vovk is the largest}
There is a unique most conservative upper expectation \(\upprev{\mathrm{A}}\) in \(\setofupprev{}_{1-5}(\tree{})\) and it is the global game-theoretic upper expectation \(\upprevvovkk{}\).
Furthermore, if an upper expectation $\upprev{}$ in \/ \(\setofupprev{}_{1-5}(\tree{})\) satisfies \ref{P iterated general}, \ref{P continuity n-meas} and \ref{P natural extension}, then it is equal to this most conservative upper expectation $\upprev{\mathrm{A}}$, and therefore also equal to $\upprevvovkk$.
\end{theorem}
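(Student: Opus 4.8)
The plan is to combine the two results just established, namely Proposition~\ref{Prop: vovk satisfies P1-P7} --- which tells us that $\upprevvovkk{}$ is a member of $\setofupprev{}_{1-5}(\tree{})$ that moreover satisfies \ref{P iterated general}, \ref{P continuity n-meas} and \ref{P natural extension} --- and Theorem~\ref{Theorem: smallest vovk general} --- which says that any member of $\setofupprev{}_{1-5}(\tree{})$ possessing those three additional properties dominates every member of $\setofupprev{}_{1-5}(\tree{})$ pointwise. Essentially no new machinery is needed; the argument is pure bookkeeping on top of these two facts.

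First I would apply Theorem~\ref{Theorem: smallest vovk general} with $\upprev{}\coloneqq\upprevvovkk{}$, which is legitimate precisely by Proposition~\ref{Prop: vovk satisfies P1-P7}, to conclude that $\upprevvovkk{}(f\vert s)\geq\upprev{}{'}(f\vert s)$ for every $\upprev{}{'}\in\setofupprev{}_{1-5}(\tree{})$, every $f\in\setofextvariables{}$ and every $s\in\situations{}$. Since $\upprevvovkk{}$ itself lies in $\setofupprev{}_{1-5}(\tree{})$ by Proposition~\ref{Prop: vovk satisfies P1-P7}, this exhibits $\upprevvovkk{}$ as a most conservative element of $\setofupprev{}_{1-5}(\tree{})$.

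Next I would deal with uniqueness: if $\upprev{}_1$ and $\upprev{}_2$ are both most conservative in $\setofupprev{}_{1-5}(\tree{})$, then each dominates the other pointwise, so by antisymmetry of the pointwise order on extended-real-valued maps they coincide. Hence there is a unique most conservative element of $\setofupprev{}_{1-5}(\tree{})$, which we name $\upprev{\mathrm{A}}$, and the previous step identifies it with $\upprevvovkk{}$.

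Finally, for the ``furthermore'' statement, suppose $\upprev{}\in\setofupprev{}_{1-5}(\tree{})$ satisfies \ref{P iterated general}, \ref{P continuity n-meas} and \ref{P natural extension}. Applying Theorem~\ref{Theorem: smallest vovk general} to this $\upprev{}$ with $\upprev{}{'}\coloneqq\upprevvovkk{}$ yields $\upprev{}\geq\upprevvovkk{}$ pointwise, while applying it to $\upprevvovkk{}$ with $\upprev{}{'}\coloneqq\upprev{}$ yields $\upprevvovkk{}\geq\upprev{}$ pointwise, so $\upprev{}=\upprevvovkk{}=\upprev{\mathrm{A}}$. I do not expect any real obstacle here: all the substantive content sits in Proposition~\ref{Prop: vovk satisfies P1-P7} (which leans on the game-theoretic machinery of \cite{Tjoens2020FoundationsARXIV}) and in Theorem~\ref{Theorem: smallest vovk general}; the one point to be careful about is simply that we are entitled to feed $\upprevvovkk{}$ into Theorem~\ref{Theorem: smallest vovk general} in the role of its hypothesised operator, which is exactly what Proposition~\ref{Prop: vovk satisfies P1-P7} licenses.
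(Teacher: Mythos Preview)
Your proposal is correct and takes essentially the same approach as the paper, which simply writes ``Immediately from Theorem~\ref{Theorem: smallest vovk general} and Proposition~\ref{Prop: vovk satisfies P1-P7}.'' You have merely unpacked the bookkeeping that the paper leaves implicit; the logical content is identical.
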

\begin{proof}
Immediately from Theorem~\ref{Theorem: smallest vovk general} and Proposition~\ref{Prop: vovk satisfies P1-P7}.
\end{proof}


\section{On the Implications of Theorem~\ref*{theorem: Vovk is the largest}}\label{Sect: discussion}

Shafer and Vovk define their game-theoretic upper expectations using supermartingales.
Their definition has a constructive flavour and can be given a clear interpretation in terms of capital processes and betting behaviour.
However, it requires that one allows unbounded and even infinite-valued bets, which we find more questionable from a behavioural point of view.
Furthermore, a complete axiomatisation of \(\upprevvovkk{}\) is, to the best of our knowledge, absent from the literature.
Theorem~\ref{theorem: Vovk is the largest} addresses both of these issues.
First, it provides an abstract axiomatisation for \(\upprevvovkk{}\) using \ref{P iterated general}, \ref{P continuity n-meas} and \ref{P natural extension} in addition to \ref{P compatibility}--\ref{P continuity}.
Most importantly, however, Theorem~\ref{theorem: Vovk is the largest} provides an alternative definition--- and interpretation---for \(\upprevvovkk{}\) as the most conservative upper expectation \(\upprev{\mathrm{A}}\) under a limited set of intuitive properties: \ref{P compatibility}--\ref{P continuity}.
This strengthens the argument in favour of using \(\upprevvovkk{}\) as a global upper expectation, because it can now be motivated from both a game-theoretic point of view and from a purely axiomatic point of view.
Readers need not even be familiar with the concepts of game-theoretic probability in order to use \(\upprevvovkk{}\) as global upper expectation.
Put simply, they only have to agree on the axioms \ref{P compatibility}--\ref{P continuity}.
And even if they wish to impose additional axioms, then \(\upprevvovkk{}\) would still serve as a conservative upper bound for their desired (upper) expectation.

Still, recalling our assumptions about the extended local models $\smash{\extlupprev{s}}$, one could regret the fact that we are only considering a special case of what is generally being done by Shafer and Vovk; an upper expectation $\smash{\extlupprev{s}}$ on $\setofgenextvariables{}(\statespace{})$ satisfying \ref{vovk local coherence: const is const}--\ref{vovk local coherence: continuity wrt non-negative functions} does not necessarily have to be an extension of a coherent upper expectation $\lupprev{s}$ on $\setofgengambles{}(\statespace{})$ satisfying \ref{upper cuts} and \ref{lower cuts}, see \cite[Example~1]{Tjoens2020FoundationsARXIV}.
However, this restriction in generality actually turns out to be rather desirable.
Indeed, as we show in \cite{Tjoens2020FoundationsARXIV}, compatibility of local and global game-theoretic upper expectations---a property that we find essential---can only be obtained if we limit ourselves to extended local models $\extlupprev{s}$ of the form that we have described earlier, that is, boundedly coherent and satisfying \ref{upper cuts} and \ref{lower cuts}.

That said, as mentioned before, we do not a priori impose \ref{lower cuts} on the local models in \cite{Tjoens2020FoundationsARXIV}.
In that paper, we mainly focus on the mathematical properties of global game-theoretic upper expectations and most of these do not require \ref{lower cuts}.
Furthermore, by dropping this axiom, we can allow for more general local models than Shafer and Vovk, who impose \ref{vovk local coherence: const is const}--\ref{vovk local coherence: continuity wrt non-negative functions}.
Nevertheless, in this less mathematical and more philosophical contribution, we do impose \ref{lower cuts} on the local models because, practically speaking, we always want local and global belief models to be compatible; see for example Section~\ref{Sect: Search of a global model}, where we considered it self-evident that \ref{P compatibility} should be desirable.
Note however that Theorem~\ref{theorem: Vovk is the largest} actually does not depend on whether the extended local models $\smash{\extlupprev{s}}$ satisfy \ref{lower cuts}.
On the one hand, because $\upprev{\mathrm{A}}$ only depends on the (boundedly) coherent local models $\lupprev{s}$, and on the other hand, because $\upprevvovkk{}$ only depends on what the values of the extended local models $\extlupprev{s}$ are on the restricted domain $\setofgenextvariablesb{}(\statespace{})$.
The latter can easily be seen from the fact that $\upprevvovkk{}$ is defined using supermartingales that are bounded below; we refer to \cite{Tjoens2020FoundationsARXIV} for more details.

\section{Additional Properties}\label{Sect: additional properties}

Of course, there is more to an uncertainty model than an axiomatisation or an interpretation, however compelling they may be. 
In order to be practically useful, its mathematical properties should also be sufficiently powerful such that inferences can be computed or approximated efficiently.
For instance, the popularity of the Lebesgue integral as a tool for defining expected values, is in part due to its strong continuity properties (e.g. the Dominated Convergence Theorem \cite{shiryaev2016probabilityPartI,royden2010real}).
What may perhaps be somewhat surprising, is that despite the simplicity of our axioms \ref{P compatibility}--\ref{P continuity}, our most conservative model \(\upprev{\mathrm{A}}\) also scores well on this count. 
For example, it satisfies the following generalisation of bounded coherence.
Its proof, as well as the proofs of the other results in this section, follows immediately from the fact that $\upprev{\mathrm{A}}$ coincides with $\upprevvovkk{}$ and the fact that these properties hold for $\upprevvovkk{}$, as is shown in \cite{Tjoens2020FoundationsARXIV}.
\begin{proposition}\label{Prop: coherence for upprev_A}
{\normalfont \cite[Proposition 13 and Theorem 15]{Tjoens2020FoundationsARXIV}} \,
For all \(f,g\in\setofextvariables\), all \(\lambda\in\nnegreals{}\), all \(\mu\in\reals{}\), all \(n \in \natz{}\) and all \(s\in\situations\), \(\upprev{\mathrm{A}}\) satisfies
\begin{enumerate}[leftmargin=*,ref={\upshape V}\arabic*,label={\upshape V\arabic*.},itemsep=3pt, series=sepcoherence ]
\item \label{vovk coherence 1} 
\(\inf_{\omega \in \Gamma(s)} f(\omega) \leq \lowprev{}_{\hspace*{1pt}\raisebox{0pt}{\scriptsize $\mathrm{A}$}}(f \vert s) \leq\upprev{\mathrm{A}}(f \vert s)\leq\sup_{\omega \in \Gamma(s)} f(\omega)\); 
\item \label{vovk coherence 2} 
\(\upprev{\mathrm{A}}(f+g \vert s)\leq\upprev{\mathrm{A}}(f \vert s) + \upprev{\mathrm{A}}(g \vert s)\);
\item \label{vovk coherence 3}
\(\upprev{\mathrm{A}}(\lambda f \vert s)=\lambda \upprev{\mathrm{A}}(f \vert s)\);
\item \label{vovk coherence 6}
\(\upprev{\mathrm{A}}(f + \mu \vert s)=\upprev{\mathrm{A}}(f \vert s) + \mu\).
\item \label{vovk coherence iterated}
\(\upprev{\mathrm{A}}(f \vert X_{1:n}) = \upprev{\mathrm{A}} \bigl(\upprev{\mathrm{A}}(f \vert X_{1:n+1}) \big\vert X_{1:n}\bigr)\).
\end{enumerate}
\end{proposition}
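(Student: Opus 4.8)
The plan is to deduce the proposition directly from Theorem~\ref{theorem: Vovk is the largest}, which already establishes that $\upprev{\mathrm{A}}$ coincides with the global game-theoretic upper expectation $\upprevvovkk{}$ (and hence that $\lowprev{}_{\mathrm{A}}$ coincides with $\lowprevvovkk{}$, by conjugacy). Once this identification is in hand, it suffices to check that $\upprevvovkk{}$ enjoys the listed properties, and for that I would simply invoke the corresponding results from \cite{Tjoens2020FoundationsARXIV}: the coherence-type properties \ref{vovk coherence 1}--\ref{vovk coherence 6} are contained in \cite[Proposition~13]{Tjoens2020FoundationsARXIV}, and the law of iterated upper expectations \ref{vovk coherence iterated} is \cite[Theorem~15]{Tjoens2020FoundationsARXIV}. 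So the proof is, in essence, one line: apply Theorem~\ref{theorem: Vovk is the largest} and then cite these two results.

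For completeness I would also recall, at the level of a sketch, why $\upprevvovkk{}$ has these properties. Sub-additivity \ref{vovk coherence 2}, non-negative homogeneity \ref{vovk coherence 3} and constant additivity \ref{vovk coherence 6} are inherited from the supermartingale definition of $\upprevvovkk{}$: the pointwise sum of two bounded-below supermartingales is again a bounded-below supermartingale (using sub-additivity \ref{vovk local coherence: sublinearity} of the extended local models $\extlupprev{s}$), a non-negative scalar multiple of such a supermartingale is again one (using \ref{vovk local coherence: homog for positive lambda}), and translating a supermartingale by a real constant preserves the defining inequality (using \ref{vovk local coherence: const is const}); applying these facts to the test supermartingales appearing in the definition of $\upprevvovkk{}$ gives the three inequalities, and the equalities in \ref{vovk coherence 3} and \ref{vovk coherence 6} follow by applying them to $-f$ as well. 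For the bounds \ref{vovk coherence 1}, the upper bound $\upprev{\mathrm{A}}(f\vert s)\leq\sup_{\omega\in\Gamma(s)}f(\omega)$ is witnessed by the constant supermartingale with value $\sup_{\omega\in\Gamma(s)}f(\omega)$, whose limit inferior dominates $f$ on $\Gamma(s)$, while the lower bound then comes out of conjugacy together with \ref{vovk coherence 2}, exactly as \ref{sep coherence 5} was derived from \ref{sep coherence 1}--\ref{sep coherence 3} earlier in the paper.

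The step that requires care is not a computation but a matching of frameworks: one must make sure that the arithmetic conventions for $\pm\infty$ adopted here and the restriction to bounded-below supermartingales are precisely those under which \cite[Proposition~13 and Theorem~15]{Tjoens2020FoundationsARXIV} are stated, and that the assertions \ref{vovk coherence 1}--\ref{vovk coherence iterated} over the full domain $\setofextvariables{}$ (and not merely over gambles or finitary variables) are covered there. Both points have already been arranged in \cite{Tjoens2020FoundationsARXIV}, and, as noted at the end of Section~\ref{Sect: discussion}, $\upprevvovkk{}$ depends on the extended local models $\extlupprev{s}$ only through their restriction to $\setofgenextvariablesb{}(\statespace{})$, so no further hypotheses on $\extlupprev{s}$ are needed; with these observations the proposition follows immediately.
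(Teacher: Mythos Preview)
Your proposal is correct and matches the paper's own proof essentially verbatim: the paper also argues that the result follows immediately from the identification $\upprev{\mathrm{A}}=\upprevvovkk{}$ given by Theorem~\ref{theorem: Vovk is the largest}, together with \cite[Proposition~13 and Theorem~15]{Tjoens2020FoundationsARXIV}. Your additional sketch of why $\upprevvovkk{}$ satisfies \ref{vovk coherence 1}--\ref{vovk coherence iterated} and your remark on framework compatibility go beyond what the paper states explicitly, but they are consistent with the argument and do not diverge from it.
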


Another important result, better known as Fatou's Lemma, shows that the upper bound imposed by property \ref{P continuity} for a global upper expectation \(\upprev{}(f \vert s)\) of a variable \(f \in \setoflimitsoffinmeasextb{}\) need not be attained by the most conservative upper expectation \(\upprev{\mathrm{A}}\) in \(\setofupprev{}_{1-5}(\tree{})\).
Interestingly enough, it can be replaced by a generally tighter one, which is established in the following lemma.

\begin{lemma}\label{lemma: Fatou general}
{\normalfont \cite[Corollary~25]{Tjoens2020FoundationsARXIV}} \,
For any \(s\in\situations\) and any sequence \(\{f_n\}_{n\in\natz}\) in \(\setofextvariablesb\) that is uniformly bounded below, 
\(\upprev{\mathrm{A}}(\liminf_{n\to+\infty} f_n \vert s)\leq\liminf_{n\to+\infty} \upprev{\mathrm{A}}(f_n \vert s)\).
\end{lemma}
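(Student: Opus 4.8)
The statement to prove is Lemma~\ref{lemma: Fatou general}: for any \(s\in\situations\) and any sequence \(\{f_n\}_{n\in\natz}\) in \(\setofextvariablesb\) that is uniformly bounded below, \(\upprev{\mathrm{A}}(\liminf_{n\to+\infty} f_n \vert s)\leq\liminf_{n\to+\infty} \upprev{\mathrm{A}}(f_n \vert s)\). The natural strategy is to reduce this entirely to the game-theoretic model. By Theorem~\ref{theorem: Vovk is the largest}, \(\upprev{\mathrm{A}}\) coincides with \(\upprevvovkk{}\), so it suffices to prove the inequality for \(\upprevvovkk{}\). As the section preamble already notes, all results of this section follow from the corresponding properties of \(\upprevvovkk{}\) established in \cite{Tjoens2020FoundationsARXIV}; concretely, \cite[Corollary~25]{Tjoens2020FoundationsARXIV} states exactly that \(\upprevvovkk{}(\liminf_{n\to+\infty} f_n \vert s)\leq\liminf_{n\to+\infty} \upprevvovkk{}(f_n \vert s)\) for every sequence \(\{f_n\}_{n\in\natz}\) in \(\setofextvariablesb\) that is uniformly bounded below. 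Invoking this and substituting \(\upprev{\mathrm{A}}\) for \(\upprevvovkk{}\) via Theorem~\ref{theorem: Vovk is the largest} closes the argument.

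For completeness I would also sketch how one could see the inequality directly from the supermartingale definition, in case a reader wants the self-contained version. Fix \(s\in\situations\) and let \(c\in\reals\) be a uniform lower bound for the \(f_n\). Set \(g_k\coloneqq\inf_{n\geq k} f_n\), so \(g_k\uparrow \liminf_n f_n\) pointwise, each \(g_k\) is bounded below by \(c\), and \(g_k\leq f_n\) for all \(n\geq k\). The plan is: first bound \(\upprevvovkk{}(g_k\vert s)\) by \(\inf_{n\geq k}\upprevvovkk{}(f_n\vert s)\) using monotonicity (\ref{P monotonicity} for \(\upprevvovkk{}\), i.e. \cite[Proposition~13(V4)]{Tjoens2020FoundationsARXIV}), so that \(\limsup_k \upprevvovkk{}(g_k\vert s)\leq \liminf_n \upprevvovkk{}(f_n\vert s)\); then use continuity along the non-decreasing sequence \(\{g_k\}\) — which holds because \(\upprevvovkk{}\) satisfies \ref{P continuity} and \ref{P monotonicity}, and hence is continuous with respect to non-decreasing sequences (as noted in Section~\ref{Sect: Search of a global model}) — to conclude \(\upprevvovkk{}(\liminf_n f_n\vert s)=\lim_k \upprevvovkk{}(g_k\vert s)\leq\liminf_n\upprevvovkk{}(f_n\vert s)\). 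The subtlety in this direct route is that \ref{P continuity} as stated applies to \emph{finitary gambles}, whereas the \(g_k\) are general bounded-below extended variables; making the argument rigorous for that class is precisely what requires the machinery of \cite{Tjoens2020FoundationsARXIV}, which is why the citation-based proof is the cleaner one to present.

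The main obstacle, accordingly, is not conceptual but one of scope: establishing the continuity/monotone-convergence behaviour of \(\upprevvovkk{}\) on the full domain \(\setofextvariablesb{}\) (as opposed to finitary gambles) is genuinely nontrivial and is the content of \cite[Corollary~25]{Tjoens2020FoundationsARXIV}. Since we are entitled to assume that result here, the proof reduces to a one-line citation together with the identification \(\upprev{\mathrm{A}}=\upprevvovkk{}\) from Theorem~\ref{theorem: Vovk is the largest}.
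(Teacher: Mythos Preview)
Your proposal is correct and matches the paper's approach exactly: the paper states that the proofs in this section follow immediately from the identification \(\upprev{\mathrm{A}}=\upprevvovkk{}\) (Theorem~\ref{theorem: Vovk is the largest}) together with the corresponding result for \(\upprevvovkk{}\), here \cite[Corollary~25]{Tjoens2020FoundationsARXIV}. Your additional sketch and caveat about the scope of \ref{P continuity} are accurate but go beyond what the paper provides.
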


The following theorem states that \(\upprev{\mathrm{A}}\) satisfies continuity with respect to non-decreasing sequences of bounded below variables and with respect to non-increasing sequences of finitary, bounded above variables.

\begin{theorem}\label{theorem: upward convergence game}
{\normalfont \cite[Theorem~23 and Theorem~31]{Tjoens2020FoundationsARXIV}}
\begin{enumerate}[leftmargin=*,ref=\upshape{\roman*},label={\upshape{\roman*.}},itemsep=3pt]
\item\label{theorem: upward convergence game 1}
For any \(s\in\situations\) and any non-decreasing sequence \(\{f_n\}_{n\in\natz}\) in \(\setofextvariablesb\) that converges point-wise to a variable \(f\in\setofextvariablesb\), we have that
$\upprev{\mathrm{A}}(f \vert s)=\lim_{n\to+\infty} \upprev{\mathrm{A}}(f_n \vert s)$.

\item\label{theorem: upward convergence game 2}
For any \(s\in\situations\) and any non-increasing sequence \(\{f_n\}_{n\in\natz}\) of finitary, bounded above variables that converges point-wise to a variable \(f\in\setofextvariables\), we have that
$\upprev{\mathrm{A}}(f \vert s)=\lim_{n\to+\infty} \upprev{\mathrm{A}}(f_n \vert s)$.
\end{enumerate}
\end{theorem}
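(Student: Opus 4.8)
The plan is to use Theorem~\ref{theorem: Vovk is the largest}, which identifies \(\upprev{\mathrm{A}}\) with the game-theoretic upper expectation \(\upprevvovkk{}\), and to argue with \(\upprev{\mathrm{A}}\) throughout, invoking the companion paper \cite{Tjoens2020FoundationsARXIV} only for the one genuinely delicate step. Each of the two parts splits into an easy inequality coming from monotonicity [\ref{P monotonicity}] and a harder convergence inequality.

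For part~\ref{theorem: upward convergence game 1}, a non-decreasing sequence \(\{f_n\}_{n\in\natz}\) with \(f_0\) bounded below is uniformly bounded below, and its pointwise limit \(f\) equals \(\liminf_{n\to+\infty} f_n\). Hence Lemma~\ref{lemma: Fatou general} gives \(\upprev{\mathrm{A}}(f\vert s)\leq\liminf_{n\to+\infty}\upprev{\mathrm{A}}(f_n\vert s)\), and since \(\{\upprev{\mathrm{A}}(f_n\vert s)\}_{n\in\natz}\) is non-decreasing by \ref{P monotonicity}, its limit inferior equals its limit, so \(\upprev{\mathrm{A}}(f\vert s)\leq\lim_{n\to+\infty}\upprev{\mathrm{A}}(f_n\vert s)\). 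Conversely \(f\geq f_n\) for all \(n\), so \ref{P monotonicity} gives \(\upprev{\mathrm{A}}(f\vert s)\geq\lim_{n\to+\infty}\upprev{\mathrm{A}}(f_n\vert s)\). Thus this part is immediate from Fatou's inequality (Lemma~\ref{lemma: Fatou general}) together with monotonicity, and notably needs no finitary restriction.

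For part~\ref{theorem: upward convergence game 2}, since \(f\leq f_n\) and \(\{\upprev{\mathrm{A}}(f_n\vert s)\}_{n}\) is non-increasing, \ref{P monotonicity} gives \(\upprev{\mathrm{A}}(f\vert s)\leq\lim_{n\to+\infty}\upprev{\mathrm{A}}(f_n\vert s)\). For the converse I would first reduce to the case of uniformly bounded finitary gambles: after a uniform shift [\ref{vovk coherence 6}] assume all \(f_n\leq 0\) and that \(f_n\) is \(k_n\)-measurable with \(k_n\uparrow+\infty\); replacing each \(f_n\) by its truncations \(f_n^{\vee c}\) and letting \(c\to-\infty\)—using that \(\upprevvovkk{}\) is defined through bounded-below supermartingales and is therefore continuous with respect to lower cuts—leaves the claim to be proved when every \(f_n\) is a finitary gamble valued in some \([c,0]\) and \(f=\inf_n f_n\) is a (generally non-finitary) bounded gamble. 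For such \(f_n\), Lemma~\ref{Lemma: compatibility} and the law of iterated upper expectations \ref{vovk coherence iterated} realise \(\upprev{\mathrm{A}}(f_n\vert s)\) as the value \(\martingale{}_n(s)\) of the canonical \(k_n\)-measurable supermartingale \(\martingale{}_n\) obtained by backward recursion from \(f_n\); one checks that \(\{\martingale{}_n\}_{n}\) is non-increasing, that \(\martingale{}\coloneqq\inf_n\martingale{}_n\) is again a bounded-below supermartingale (here finiteness of \(\statespace{}\) makes each local \(\lupprev{t}\) continuous along the non-increasing sequence \(\martingale{}_n(t\cdot)\)), and that \(\martingale{}(s)=\inf_n\upprev{\mathrm{A}}(f_n\vert s)\). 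It would then remain to prove \(\liminf\martingale{}\geq_s f\), which via the definition of \(\upprevvovkk{}\) would finish the argument; but \emph{this} is exactly where the naive construction is not enough: on the paths where the recursion values \(\martingale{}_n(\omega^m)\) fail to track \(f(\omega)\) one must add a cheap corrective supermartingale covering this event, whose global upper probability tends to \(0\) (an Egorov-type argument). This interchange of limits is the main obstacle; it is the content of \cite[Theorem~31]{Tjoens2020FoundationsARXIV}, and it genuinely relies on the \emph{finitary} hypothesis on the \(f_n\) and on the finiteness of \(\statespace{}\), since the corresponding ``reverse Fatou'' statement for arbitrary uniformly bounded-above sequences in \(\setofextvariablesb{}\) is false.
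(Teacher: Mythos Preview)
Your proposal is essentially correct and aligned with the paper. The paper's own proof of this theorem is a one-line citation: it identifies \(\upprev{\mathrm{A}}\) with \(\upprevvovkk{}\) via Theorem~\ref{theorem: Vovk is the largest} and then simply invokes \cite[Theorems~23 and~31]{Tjoens2020FoundationsARXIV}. So for part~\ref{theorem: upward convergence game 2} you end up in exactly the same place as the paper---deferring the hard inequality to \cite[Theorem~31]{Tjoens2020FoundationsARXIV}---and the sketch you give beforehand (backward-recursion supermartingales, taking infima, corrective term) is a reasonable outline of what that proof actually does, though your reduction step via lower cuts appeals to a continuity property of \(\upprevvovkk{}\) that is not established in the present paper and would itself need to be imported from \cite{Tjoens2020FoundationsARXIV}.

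For part~\ref{theorem: upward convergence game 1} you do a bit more than the paper: rather than citing \cite[Theorem~23]{Tjoens2020FoundationsARXIV} directly, you derive the statement from Lemma~\ref{lemma: Fatou general} (Fatou) together with \ref{P monotonicity}. This is a clean and correct deduction---a non-decreasing sequence bounded below by \(f_0\in\setofextvariablesb\) is uniformly bounded below, its limit coincides with its limit inferior, and monotonicity of \(\{\upprev{\mathrm{A}}(f_n\vert s)\}_n\) turns the \(\liminf\) into a \(\lim\). Since Lemma~\ref{lemma: Fatou general} is itself only cited from the companion paper, you have not truly made the argument more self-contained, but you have made the logical dependence more transparent: upward monotone convergence is an immediate corollary of Fatou plus monotonicity, whereas the downward result genuinely needs a separate argument.
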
 

Apart from the brief overview above, \(\upprevvovkk{} = \upprev{\mathrm{A}}\) also satisfies a weak and a strong law of large numbers, a law of the iterated logarithm, L\'evy's zero-one law, and many more surprisingly strong properties; we refer the interested reader to Shafer and Vovk's work \cite{shafer2012,Vovk2019finance} and to ours \cite{TJoens2018Continuity,Tjoens2020FoundationsARXIV}.

\section{Measure-Theoretic Belief Models}\label{sect: relation to measure theory}

In contradistinction with the game-theoretic approach, where the central concept is that of a supermartingale, the measure-theoretic approach relies  on \(\sigma\)-additive probability measures in order to describe the global dynamics of an uncertain process.
In a precise probabilities context, this global measure is usually obtained by extending the information that is included in the (precise) local models, now represented by probability mass functions on \(\statespace{}\), using Ionescu Tulcea's extension theorem \cite[Theorem 2.9.2]{shiryaev2016probabilityPartI}.
This consists of two consecutive steps.
First, a probability measure on the algebra generated by all cylinder events is constructed in a straightforward manner, by combining local (conditional) probabilities using the well-known product rule.
Subsequently, this probability measure is extended to a probability measure on the \(\sigma\)-algebra \(\mathscr{F}\) generated by all cylinder events, which, according to Carathéodory's extension theorem \cite[Theorem 1.7]{williams1991probability}, can be done uniquely.
Linear expectations are then typically obtained by Lebesgue integration and by restricting the domain to (random) variables that are measurable with respect to the \(\sigma\)-algebra \(\mathscr{F}\).

This traditional measure-theoretic approach has a few shortcomings however.
First of all, it only considers the special case where each local model is represented by a single probability mass function on \(\statespace{}\).
Hence, we cannot directly apply it here in our more general, imprecise probabilities setting.
Secondly, the domain of the expectation operators is restricted to measurable (random) variables and therefore, no explicit information is given about non-measurable variables.
Thirdly, by condensing all local probability mass functions into a single global probability measure, we disregard part of the information that is incorporated in the local probability mass functions.
More specifically, the part that describes the process' dynamics conditional on the fact that its path will lie in a set of probability zero (with respect to the global probability measure).
Indeed, conditional probabilities and expectations are typically derived from the global probability measure using Bayes' rule, yet, for those cases where the conditioning event has probability zero, the conditional probabilities and expectations become ill-defined or are chosen arbitrarily; see \ref{subsect: Basic measure-theoretic concepts} and \cite{shiryaev2016probabilityPartI,billingsley1995probability}.

In this section, we aim to adapt the traditional measure-theoretic approach to our setting and deal with these problems in a satisfactory manner.
As a first step, we will limit ourselves to the precise case and try to resolve the last two issues mentioned above.
The global probability measure on \(\mathscr{F}\), which is the starting point for the traditional approach, will be replaced by a `conditional probability measure' in the sense of \cite[Definition~6]{8535240}.
Essentially, such a conditional measure specifies a probability measure on \(\mathscr{F}\) for each situation \(s \in \situations{}\), which describes the global dynamics of the uncertain process if we are sure that the path \(\omega\) taken by the process will pass through this situation~\(s\).
This will enable us to also meaningfully define linear expectations conditional on situations that have probability zero.
Subsequently, we will extend the domain of these conditional linear expectation operators to all global variables \(\setofextvariables{}\) by taking a particular upper integral leading to an upper expectation.
As a final step, we allow for imprecision in the local models by considering a set \(\mathbb{P}_s\) of probability mass functions on \(\statespace{}\) in each situation~\(s\).
We can then apply the foregoing course of reasoning for each possible selection \(p \colon s \in \situations{} \mapsto p(\cdot \vert s)\) of local probability mass functions, where, in each situation~\(s\), the selected probability mass function \(p(\cdot \vert s)\) belongs to \(\mathbb{P}_s\).
Hence, instead of a single upper expectation operator, we now obtain a set of them.
The global measure-theoretic upper expectation \(\upprev{\mathrm{meas}}\) will then be defined as the upper envelope of all such `compatible' upper expectation operators, similar to what is done on a local level.

The resulting operator \(\upprev{\mathrm{meas}}\) will satisfy \ref{P compatibility}--\ref{P continuity}, which implies by Theorem~\ref{theorem: Vovk is the largest} that the upper expectation \(\upprev{\mathrm{A}}\)---and therefore also \(\upprevvovkk{}\)---is an upper bound for \(\upprev{\mathrm{meas}}\).
Moreover, if all local models are precise, all upper expectation operators \(\upprev{\mathrm{A}}\), \(\upprevvovkk{}\) and \(\upprev{\mathrm{meas}}\) will be shown to coincide.

\subsection*{Precise Probability Trees}\label{sect: Measure-theoretic expectations in precise probability trees}

In this subsection, we consider a \emph{precise probability tree} $p$, that is, a map \(p \colon s \in \situations{} \mapsto p(\cdot \vert s)\) where each $p(\cdot \vert s)$ is a probability mass function on $\statespace{}$.
Recalling our considerations in Section~\ref{Sect: upprev}, each mass function $p(\cdot \vert s)$ is in a one-to-one correspondence with a linear expectation \(\lupprev{s}\) on \(\setofgengambles{}(\statespace{})\), so we could just as well have started from an imprecise probability tree $\lupprev{}$ where all local models \(\lupprev{s}\) are linear on \(\setofgengambles{}(\statespace{})\).
However, we prefer to start with $p$ because it is in accordance with the traditional measure-theoretic view towards uncertainty, where probability measures or charges---and therefore also probability mass functions---are considered to be primary objects.

As mentioned above, we aim to construct a conditional probability measure that allows us to condition on any situation \(s \in \situations{}\) meaningfully.
To do so, we follow the same reasoning as proposed in \cite[Chapter 3]{8535240}.
First, we define, for any \(x_{1:n} \in \situations{}\), a probability charge \(\mathrm{P}(\cdot \vert x_{1:n})\) on the algebra generated by all cylinder events.
This can be done in a simple and intuitive manner \cite[Lemma 14]{8535240}: for any \(m \in \natz{}\) and any \(C \subseteq \statespace{}^m\), we let
\begin{align}\label{Eq: precise probability on algebra}
\mathrm{P}(C \vert x_{1:n}) \coloneqq \sum_{z_{1:m} \in C} \mathrm{P}(z_{1:m} \vert x_{1:n}),
\text{ where } \mathrm{P}(z_{1:m} \vert x_{1:n}) \coloneqq 
\begin{aligned}
\begin{cases}
\prod_{i=n}^{m-1} p( z_{i+1} \vert z_{1:i}) &\text{ if } n < m \text{ and } z_{1:n} = x_{1:n} \\
1 &\text{ if } n \geq m \text{ and } z_{1:m} = x_{1:m} \\
0 &\text{ otherwise, }
\end{cases}
\end{aligned}
\end{align}
where we use the shorthand notation \(\mathrm{P}(C \vert x_{1:n})\) for any \(C \subseteq \statespace{}^m\) to mean \(\mathrm{P}(\cup_{z_{1:m} \in C} \Gamma(z_{1:m}) \vert x_{1:n})\), and where we did not make any particular distinction between the notations \(z_{1:m}\) and \(\{z_{1:m}\}\).
Since each \(\mathrm{P}(\cdot \vert x_{1:n})\) defined in this way is a (finitely additive) probability charge on the algebra generated by all cylinder events \cite[Chapter 3]{8535240}, and therefore automatically a \(\sigma\)-addive probability measure on this algebra \cite[Theorem 2.3]{billingsley1995probability}, we can use Carathéodory's extension theorem \cite[Theorem 1.7]{williams1991probability} to extend each \(\mathrm{P}(\cdot \vert x_{1:n})\) to a unique \(\sigma\)-additive measure on the \(\sigma\)-algebra \(\mathscr{F}\) generated by all cylinder events.
Then, according to \cite[Theorem~15]{8535240}, we can gather all such probability measures \(\mathrm{P}(\cdot \vert x_{1:n})\) to obtain a single \emph{conditional probability measure} \(\mathrm{P} \colon \mathscr{F} \times \situations{} \to \reals{}\) \cite[Definition~6]{8535240}, which is \emph{coherent} in the sense of \cite[Definition 5]{8535240}.
This notion of coherence is related to the bounded coherence condition in Section~\ref{Sect: upprev}; both notions represent rational behaviour, but, the version used in \cite[Definition 5]{8535240} is adapted to probability measures and involves conditioning on events.

Before defining our global measure-theoretic (upper) expectation, we first recall the following concepts about measurability and integration; we refer to \ref{subsect: Basic measure-theoretic concepts} for a more elaborate introduction.
We say that a global variable $f\in\setofextvariables{}$ is $\mathscr{F}$-measurable if, for all $c\in\reals$, the set $\{\omega\in\Omega\colon f(\omega)\leq c \}$ is $\mathscr{F}$-measurable, meaning that it is an element of the $\sigma$-algebra $\mathscr{F}$.
We will commonly use the properties that $\mathscr{F}$-measurability of variables is preserved under taking maxima or minima (this follows immediately from the fact that the class of all $\mathscr{F}$-measurable sets is closed under countable unions and intersections \cite[Section~II.4]{shiryaev1995probability}) and that $\mathscr{F}$-measurability is preserved under pointwise convergence of variables \cite[Theorem II.4.2]{shiryaev1995probability}.
Moreover, recall that the Lebesgue integral $\int_\Omega f \dif\mathrm{P}_\mathrm{u}$ \cite{shiryaev2016probabilityPartI,billingsley1995probability,royden2010real} with respect to any (unconditional) probability measure $\mathrm{P}_\mathrm{u}$ on $\mathscr{F}$ always exists if $f$ is $\mathscr{F}$-measurable and either bounded or non-negative.
However, for a general $\mathscr{F}$-measurable variable $\smash{f\in\setofextvariables{}}$, the integral only exists if \(\min\{\int_{\Omega} f^{+} \dif \mathrm{P}_\mathrm{u},\int_{\Omega} f^{-} \dif \mathrm{P}_\mathrm{u}\} < +\infty\), where \(f^{+} \coloneqq f^{\vee 0}\) and \(f^{-}\coloneqq -f^{\wedge 0}\).
In that case, we let
$\smash{\int_{\Omega} f \dif \mathrm{P}_\mathrm{u} \coloneqq \int_{\Omega} f^{+} \dif \mathrm{P}_\mathrm{u} - \int_{\Omega}  f^{-} \dif \mathrm{P}_\mathrm{u}}$.
Then, because the Lebesgue integral is trivially real-valued for a bounded $\mathscr{F}$-measurable variable (this can for instance be deduced from \ref{prop measure: linearity} and \ref{prop measure: monotonicity} in \ref{subsect: Basic measure-theoretic concepts}), it can easily be seen that $\int_\Omega f \dif\mathrm{P}_\mathrm{u}$ also always exists for an $\mathscr{F}$-measurable variable $f\in\setofextvariablesb{}$.

We now adopt an approach that is similar to what is done in a traditional context, defining expectations as Lebesgue integrals.
An important difference, however, is that we integrate with respect to a probability measure that depends on the conditioning event.
Concretely, we define the \emph{measure-theoretic expectation} \(\smash{\mathrm{E}_{\mathrm{meas},p}(f \vert s)}\) of an \(\mathscr{F}\)-measurable variable \(f \in \setofextvariables{}\) conditional on the situation \(s \in \situations{}\) by the Lebesgue integral \(\smash{\int_\Omega f \dif \mathrm{P}_{\vert s}}\) with respect to the (unconditional) probability measure \(\mathrm{P}_{\vert s} \coloneqq \mathrm{P}(\cdot \vert s)\), provided that \(\int_\Omega f \dif \mathrm{P}_{\vert s}\) exists.
The expectation \(\smash{\mathrm{E}_{\mathrm{meas},p}( \cdot \vert \Box)}\) then corresponds to the unconditional expectation as introduced in traditional measure theory.
The subscript \(p\) in \(\smash{\mathrm{E}_{\mathrm{meas},p}}\) is used to remind us that \(\smash{\mathrm{E}_{\mathrm{meas},p}}\) depends on the precise probability tree \(p\).
Since \(\smash{\mathrm{E}_{\mathrm{meas},p}}\) is defined in terms of the Lebesgue integral, it inherits several properties of this integral; again, we refer to \ref{subsect: Basic measure-theoretic concepts} for more information.
The following is a subset of these properties---simplified and applied to \(\smash{\mathrm{E}_{\mathrm{meas},p}}\)---that we will need in the main text.
We take $s\in\situations{}$ to be any situation and implicitly assume that the considered expectations exist.  
\begin{enumerate}[leftmargin=21pt,itemsep=3pt]
\item[\ref{prop measure: linearity}.] 
\(\mathrm{E}_{\mathrm{meas},p}(a f + b \vert s) = a \mathrm{E}_{\mathrm{meas},p}(f\vert s) + b\) for all $\mathscr{F}$-measurable variables $f\in\setofextvariables$ and all \(a,b \in \reals{}\).
\item[\ref{prop measure: monotonicity}.] 
\(f \leq g\) \(\Rightarrow \mathrm{E}_{\mathrm{meas},p}(f \vert s) \leq  \mathrm{E}_{\mathrm{meas},p}(g \vert s)\) for all $\mathscr{F}$-measurable variables $f,g\in\setofextvariables$.
\item[\ref{lemma: limits for precise measure-theoretic expectation ii}.]
Consider any non-decreasing sequence \(\{f_n\}_{n \in \natz{}}\) of \/ $\mathscr{F}$-measurable variables in $\setofextvariables{}$.
If there is an \(\mathscr{F}\)-measurable function \(f^\ast\) such that \(\mathrm{E}_{\mathrm{meas},p}(f^\ast \vert s) > -\infty\) and \(f_n \geq f^\ast\) for all \(n \in \natz{}\), then
\begin{align*}
\lim_{n \to +\infty} \mathrm{E}_{\mathrm{meas},p}(f_n  \vert s) = \mathrm{E}_{\mathrm{meas},p}(f \vert s)
\text{ where } \lim_{n \to +\infty} f_n = f.
\end{align*} 

\item[\ref{lemma: limits for precise measure-theoretic expectation iii}.] 
Consider any non-increasing sequence \(\{f_n\}_{n \in \natz{}}\) of \/ $\mathscr{F}$-measurable variables in $\setofextvariables{}$.
If there is an \(\mathscr{F}\)-measurable function \(f^\ast\) such that \(\mathrm{E}_{\mathrm{meas},p}(f^\ast \vert s) < +\infty\) and \(f_n \leq f^\ast\) for all \(n \in \natz{}\), then
\begin{align*}
\lim_{n \to +\infty} \mathrm{E}_{\mathrm{meas},p}(f_n \vert s) = \mathrm{E}_{\mathrm{meas},p}(f \vert s)
\text{ where } \lim_{n \to +\infty} f_n = f.
\end{align*} 
\end{enumerate}

Now that our measure-theoretic definitions are in place, we aim at relating the expectation \(\mathrm{E}_{\mathrm{meas},p}\) to our upper expectation operator \(\smash{\upprev{\mathrm{A}}}\) discussed in the earlier sections of this paper.
For the sake of clarity, we will use \(\smash{\upprev{\mathrm{A},p}}\) to denote our upper expectation \(\smash{\upprev{\mathrm{A}}}\) corresponding to the imprecise probability tree $\lupprev{}$ associated with $p$; so each $\lupprev{s}$ is defined by $\smash{\lupprev{s}(f) \coloneqq \sum_{x\in\statespace{}}f(x)p(x\vert s)}$ for all $f\in\setofgengambles{}(\statespace{})$.
We first focus on the domain of all $\mathscr{F}$-measurable variables in $\setofextvariablesb{}$.
This seems appropriate because, though we cannot guarantee existence of the integral \(\smash{\int_\Omega f \dif \mathrm{P}_{\vert s}}\)---and therefore also \(\mathrm{E}_{\mathrm{meas},p}(f \vert s)\)---for a general $\mathscr{F}$-measurable variable $\smash{f\in\setofextvariables{}}$, it can always be guaranteed for an $\mathscr{F}$-measurable variable $\smash{f\in\setofextvariablesb{}}$, as we have explained earlier.
It will turn out that both operators \(\smash{\mathrm{E}_{\mathrm{meas},p}}\) and \(\smash{\upprev{\mathrm{A},p}}\) coincide on the domain of all \(\mathscr{F}\)-measurable variables in \(\setofextvariablesb{}\).

To prove this, we base ourselves on \cite[Theorem~9.3]{Vovk2019finance}; a result by Shafer and Vovk---yet the essential idea originates from Jean Ville---that shows that game-theoretic and measure-theoretic (upper) expectations coincide on $\mathscr{F}$-measurable gambles, in case we are considering precise probability trees.
Proposition~\ref{prop: unconditional precise measure is equal to game for bounded} below establishes the equality of $\smash{\upprev{\mathrm{A},p}}$ and \(\smash{\mathrm{E}_{\mathrm{meas},p}}\) on this domain and therefore, due to Theorem~\ref{theorem: Vovk is the largest}, also of $\upprevvovkk{}$ and \(\smash{\mathrm{E}_{\mathrm{meas},p}}\).
Though the proof of Proposition~\ref{prop: unconditional precise measure is equal to game for bounded} is based on the same principles and ideas as those that underly the proof of \cite[Theorem~9.3]{Vovk2019finance}, they are stated in a somewhat different setting and, for an unexperienced reader, it may therefore not be entirely clear how both are related to each other.
Moreover, our result here also applies to conditional expectations, whereas \cite[Theorem~9.3]{Vovk2019finance} only considers the unconditional case.
We have therefore decided to include a self-contained proof of Proposition~\ref{prop: unconditional precise measure is equal to game for bounded} in \ref{Sect: Proofs}, starting from Ville's martingale theorem.

\begin{proposition}\label{prop: unconditional precise measure is equal to game for bounded}
For any \(\mathscr{F}\)-measurable \(f \in \setofgambles{}\) and any $s\in\situations{}$, we have that \/ \(\mathrm{E}_{\mathrm{meas},p}(f \vert s) = \upprev{\mathrm{A},p}(f \vert s)\).
\end{proposition}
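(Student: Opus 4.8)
The plan is to first apply Theorem~\ref{theorem: Vovk is the largest}, which identifies \(\upprev{\mathrm{A},p}\) with the game-theoretic upper expectation \(\upprevvovkk{}\) of the (linear) imprecise probability tree \(\lupprev{}\) associated with \(p\); it then suffices to prove that \(\mathrm{E}_{\mathrm{meas},p}(f\vert s)=\upprevvovkk{}(f\vert s)\) for every \(\mathscr{F}\)-measurable \(f\in\setofgambles{}\) and every \(s\in\situations{}\). This is, in essence, \cite[Theorem~9.3]{Vovk2019finance} --- the classical coincidence of game-theoretic and measure-theoretic expectations, going back to Ville --- recast through supermartingales and extended to conditioning. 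I would prove the two inequalities separately, carrying the conditioning situation \(s\) through the argument; this is where the situation-wise conditional probability measure \(\mathrm{P}(\cdot\vert s)\) of \cite{8535240}, rather than a single global measure, is used.

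For \(\mathrm{E}_{\mathrm{meas},p}(f\vert s)\leq\upprevvovkk{}(f\vert s)\), fix any \(\martingale\in\setofextsupmartb{}\) with \(\liminf\martingale\geq_s f\). Replacing \(\martingale\) by its truncation \(t\mapsto\min\{\martingale(t),c\}\) for some real \(c\geq\sup f\) --- which is again a bounded below supermartingale, since every local model is linear and hence monotone, and which still satisfies \(\liminf\bigl(\min\{\martingale,c\}\bigr)=\min\{\liminf\martingale,c\}\geq_s f\) --- I may assume \(\martingale\) bounded. Because the local models are precise, \(\martingale\) is then a genuine \(\mathrm{P}_{\vert s}\)-supermartingale for the natural filtration, and a standard computation based on Equation~\eqref{Eq: precise probability on algebra} gives \(\mathrm{E}_{\mathrm{meas},p}(\martingale(X_{1:n})\vert s)\leq\martingale(s)\) for all sufficiently large \(n\). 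Combining this with Fatou's lemma (the sequence \(\{\martingale(X_{1:n})\}_n\) is uniformly bounded below) and with monotonicity of the Lebesgue integral --- noting that \(\mathrm{P}_{\vert s}\) is concentrated on \(\Gamma(s)\), so that \(\liminf\martingale\geq_s f\) already forces \(\mathrm{E}_{\mathrm{meas},p}(\liminf\martingale\vert s)\geq\mathrm{E}_{\mathrm{meas},p}(f\vert s)\) --- I obtain \(\mathrm{E}_{\mathrm{meas},p}(f\vert s)\leq\liminf_{n\to+\infty}\mathrm{E}_{\mathrm{meas},p}(\martingale(X_{1:n})\vert s)\leq\martingale(s)\), and taking the infimum over all admissible \(\martingale\) yields the inequality.

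For the reverse inequality \(\upprevvovkk{}(f\vert s)\leq\mathrm{E}_{\mathrm{meas},p}(f\vert s)\), I would exhibit, for each \(\epsilon>0\), a supermartingale in \(\setofextsupmartb{}\) of initial value at most \(\mathrm{E}_{\mathrm{meas},p}(f\vert s)+\epsilon\) whose lower limit dominates \(f\) on \(\Gamma(s)\). Its ``deterministic part'' is the process \(t\mapsto\mathrm{E}_{\mathrm{meas},p}(f\vert t)=\int_\Omega f\dif\mathrm{P}_{\vert t}\), which is defined in \emph{every} situation \(t\) thanks to the conditional probability measure of \cite{8535240}, and which is a martingale on the whole tree by the product-rule form of Equation~\eqref{Eq: precise probability on algebra} --- under \(\mathrm{P}_{\vert s}\) it is the Doob martingale of \(f\) along paths through \(s\). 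Since \(f\) is a bounded \(\mathscr{F}\)-measurable variable and \(\mathscr{F}=\sigma\bigl(\cup_n\sigma(X_{1:n})\bigr)\), Doob's martingale convergence theorem \cite{williams1991probability} gives \(\mathrm{E}_{\mathrm{meas},p}(f\vert X_{1:n})\to f\) \(\mathrm{P}_{\vert s}\)-almost surely, so the \(\mathscr{F}\)-measurable set \(N_s\coloneqq\{\omega\in\Gamma(s)\colon\liminf_{n\to+\infty}\mathrm{E}_{\mathrm{meas},p}(f\vert\omega^n)<f(\omega)\}\) has \(\mathrm{P}(N_s\vert s)=0\). Ville's martingale theorem \cite{ville1939etudecritique} then supplies a non-negative supermartingale \(\martingale^\epsilon\) (obtained on the subtree rooted at \(s\) and extended to the full tree by the value \(\epsilon\) on the prefixes of \(s\) and \(0\) elsewhere) with \(\martingale^\epsilon(s)\leq\epsilon\) and \(\liminf_n\martingale^\epsilon(\omega^n)=+\infty\) for every \(\omega\in N_s\). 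The sum \(\mathrm{E}_{\mathrm{meas},p}(f\vert\cdot)+\martingale^\epsilon\) is a bounded below supermartingale (local models being linear, hence sub-additive), its initial value is at most \(\mathrm{E}_{\mathrm{meas},p}(f\vert s)+\epsilon\), and its lower limit dominates \(f\) on \(\Gamma(s)\): on \(\Gamma(s)\setminus N_s\) the first summand already has lower limit at least \(f\) while the second is non-negative, and on \(N_s\) the second summand diverges to \(+\infty\). Hence \(\upprevvovkk{}(f\vert s)\leq\mathrm{E}_{\mathrm{meas},p}(f\vert s)+\epsilon\) for all \(\epsilon>0\), which finishes the proof.

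The main obstacle is this reverse inequality, and within it the step of upgrading the almost-sure convergence provided by martingale convergence to the \emph{everywhere} domination \(\liminf\bigl(\mathrm{E}_{\mathrm{meas},p}(f\vert\cdot)+\martingale^\epsilon\bigr)\geq_s f\) that the definition of \(\upprevvovkk{}\) demands; this is exactly the role of Ville's theorem, and it is also why the situation-wise conditional probability measure of \cite{8535240} is indispensable, since it makes \(t\mapsto\mathrm{E}_{\mathrm{meas},p}(f\vert t)\) genuinely well-defined in every situation, including those of zero global probability. A secondary point requiring care is the conditional case: there one invokes the consistency property that conditioning \(\mathrm{P}_{\vert s}\) on an event \(\Gamma(x_{1:n})\subseteq\Gamma(s)\) returns \(\mathrm{P}_{\vert x_{1:n}}\), and Ville's theorem has to be applied relative to \(\mathrm{P}_{\vert s}\), i.e. inside the subtree rooted at \(s\).
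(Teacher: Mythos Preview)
Your proposal is correct and follows the same essential strategy as the paper---reduce via Theorem~\ref{theorem: Vovk is the largest} to the game-theoretic upper expectation, then prove both inequalities using martingale convergence and Ville's theorem---but the two halves are organised differently, and it is worth recording the contrast.

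For the inequality \(\upprevvovkk{}(f\vert s)\leq\mathrm{E}_{\mathrm{meas},p}(f\vert s)\), both arguments use (a form of) L\'evy's zero-one law followed by Ville's theorem to kill the exceptional null set. The paper first passes through an auxiliary lemma (Lemma~\ref{lemma: measure martingale to game martingale}) that converts a measure-theoretic martingale under \(\mathrm{P}_{\vert s}\) into a game-theoretic supermartingale on the full tree, carefully patching up situations of probability zero. Your route is slightly more direct: you take the process \(t\mapsto\mathrm{E}_{\mathrm{meas},p}(f\vert t)\) itself, which is globally well defined thanks to the situation-wise conditional measures of \cite{8535240}, and observe that it is already a game-theoretic martingale by the product-rule structure of Equation~\eqref{Eq: precise probability on algebra}. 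This bypasses most of the null-set bookkeeping, though you still need a conversion step when invoking Ville (which gives a \emph{measure-theoretic} martingale diverging on \(N_s\)); the paper's Lemma~\ref{lemma: measure martingale to game martingale} is exactly what fills that in.

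For the reverse inequality \(\mathrm{E}_{\mathrm{meas},p}(f\vert s)\leq\upprevvovkk{}(f\vert s)\), the approaches differ more markedly. You argue directly: truncate an arbitrary admissible supermartingale, identify it as a \(\mathrm{P}_{\vert s}\)-supermartingale, and apply Fatou. The paper instead exploits the self-conjugacy of \(\mathrm{E}_{\mathrm{meas},p}\) in the precise case: having already shown \(\upprevvovkk{}(g\vert s)\leq\mathrm{E}_{\mathrm{meas},p}(g\vert s)\) for all \(\mathscr{F}\)-measurable gambles \(g\), it applies this to \(-f\) and uses \(\mathrm{E}_{\mathrm{meas},p}(f\vert s)=-\mathrm{E}_{\mathrm{meas},p}(-f\vert s)\leq-\upprevvovkk{}(-f\vert s)=\lowprevvovkk{}(f\vert s)\leq\upprevvovkk{}(f\vert s)\). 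This conjugacy trick is shorter and avoids the truncation and Fatou machinery altogether, at the cost of being less transparent about why the inequality holds.
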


The next result extends the domain of the equality to all $\mathscr{F}$-measurable variables in \(\setofextvariablesb{}\).

\begin{theorem}\label{theorem: Ville extended variables bounded below}
For any \(\mathscr{F}\)-measurable \(f \in \setofextvariablesb{}\) and any \(s \in \situations{}\), we have that \/ \(\mathrm{E}_{\mathrm{meas},p}(f \vert s) = \upprev{\mathrm{A},p}(f \vert s)\).
\end{theorem}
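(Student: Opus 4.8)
The plan is to deduce this from the bounded case, Proposition~\ref{prop: unconditional precise measure is equal to game for bounded}, by truncating $f$ from above and letting the truncation level tend to $+\infty$. First I would fix an $\mathscr{F}$-measurable $f\in\setofextvariablesb{}$ and $s\in\situations{}$, and choose a real lower bound $m\in\reals{}$ for $f$. For each $n\in\nats{}$ the truncation $f^{\wedge n}=\min\{f,n\}$ is $\mathscr{F}$-measurable, since $\mathscr{F}$-measurability is preserved under taking minima and constants are $\mathscr{F}$-measurable; moreover it is real-valued and bounded, as $\min\{m,1\}\leq f^{\wedge 1}\leq f^{\wedge n}\leq n$ for $n\geq 1$, so $f^{\wedge n}\in\setofgambles{}$. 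Proposition~\ref{prop: unconditional precise measure is equal to game for bounded} therefore applies and yields $\mathrm{E}_{\mathrm{meas},p}(f^{\wedge n}\vert s)=\upprev{\mathrm{A},p}(f^{\wedge n}\vert s)$ for every $n\in\nats{}$.

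Next I would pass to the limit $n\to+\infty$ on both sides. The sequence $\{f^{\wedge n}\}_{n\in\nats{}}$ is non-decreasing and converges pointwise to $f$: on a path $\omega$ with $f(\omega)<+\infty$ one has $f^{\wedge n}(\omega)=f(\omega)$ for all $n\geq f(\omega)$, while on a path with $f(\omega)=+\infty$ one has $f^{\wedge n}(\omega)=n\to+\infty$; being non-decreasing, the sequence also satisfies $f^{\wedge n}\geq f^{\wedge 1}\geq \min\{m,1\}$ for all $n\geq 1$, so it is uniformly bounded below by the real constant $\min\{m,1\}$. For the measure-theoretic side, property~\ref{lemma: limits for precise measure-theoretic expectation ii}, applied with the constant $\mathscr{F}$-measurable function $f^{\ast}\coloneqq\min\{m,1\}$ (for which $\mathrm{E}_{\mathrm{meas},p}(f^{\ast}\vert s)=\min\{m,1\}>-\infty$), gives $\lim_{n\to+\infty}\mathrm{E}_{\mathrm{meas},p}(f^{\wedge n}\vert s)=\mathrm{E}_{\mathrm{meas},p}(f\vert s)$, where the right-hand side is well-defined because $f$ is $\mathscr{F}$-measurable and bounded below. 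For the game-theoretic side, since each $f^{\wedge n}$ belongs to $\setofextvariablesb{}$ and $f\in\setofextvariablesb{}$, item~\ref{theorem: upward convergence game 1} of Theorem~\ref{theorem: upward convergence game} gives $\lim_{n\to+\infty}\upprev{\mathrm{A},p}(f^{\wedge n}\vert s)=\upprev{\mathrm{A},p}(f\vert s)$. Chaining the three equalities then yields $\mathrm{E}_{\mathrm{meas},p}(f\vert s)=\upprev{\mathrm{A},p}(f\vert s)$, which is what we want.

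I do not anticipate a genuine obstacle here: the argument is a routine \emph{truncate and take a monotone limit}. The only points that need care are verifying that each $f^{\wedge n}$ is indeed a bounded $\mathscr{F}$-measurable real variable, so that Proposition~\ref{prop: unconditional precise measure is equal to game for bounded} is applicable, and correctly matching the hypotheses of the two monotone-convergence statements---namely the uniform real lower bound required by property~\ref{lemma: limits for precise measure-theoretic expectation ii} and the membership in $\setofextvariablesb{}$ required by item~\ref{theorem: upward convergence game 1} of Theorem~\ref{theorem: upward convergence game}. Both follow immediately from the hypothesis that $f$ is bounded below by a real constant.
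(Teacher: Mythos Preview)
Your proposal is correct and essentially identical to the paper's own proof: both truncate $f$ from above by $f^{\wedge n}$, invoke Proposition~\ref{prop: unconditional precise measure is equal to game for bounded} for each bounded truncation, and then pass to the limit using \ref{lemma: limits for precise measure-theoretic expectation ii} on the measure-theoretic side and Theorem~\ref{theorem: upward convergence game}(\ref{theorem: upward convergence game 1}) on the game-theoretic side. The only cosmetic difference is that the paper first reduces to non-negative $f$ via constant additivity (\ref{prop measure: linearity} and \ref{vovk coherence 6}), whereas you keep the general lower bound $m$ and carry $\min\{m,1\}$ along explicitly; both routes are equally valid.
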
 
\begin{proof}
Consider any \(f \in \setofextvariablesb{}\) that is \(\mathscr{F}\)-measurable and any \(s \in \situations{}\).
Because \(f\) is bounded below and \(\mathscr{F}\)-measurable, \(\smash{\mathrm{E}_{\mathrm{meas},p}(f \vert s)}\) exists.
We can moreover assume that \(f\) is non-negative without loss of generality because it is bounded below and both \(\smash{\mathrm{E}_{\mathrm{meas},p}}\) and \(\upprev{\mathrm{A},p}\) are constant additive with respect to real constants; see \ref{prop measure: linearity} and \ref{vovk coherence 6}.
Consider now the non-decreasing sequence $\{f^{\wedge n}\}_{n\in\natz{}}$ of upper cuts and note that each $f^{\wedge n}$ is bounded and \(\mathscr{F}\)-measurable; this follows from the fact that \(\mathscr{F}\)-measurability is preserved under taking minima.
Using \ref{lemma: limits for precise measure-theoretic expectation ii}---which we are allowed to use because \(\mathrm{E}_{\mathrm{meas},p}(f^{\wedge 0} \vert s)\) is real (since $f^{\wedge 0}$ is bounded) and because \(\{f^{\wedge n}\}_{n \in \natz{}}\) is non-decreasing---we have that \(\mathrm{E}_{\mathrm{meas},p}(f \vert s) = \lim_{n \to +\infty} \mathrm{E}_{\mathrm{meas},p}(f^{\wedge n} \vert s)\).
As a consequence, 
\begin{equation*}
\mathrm{E}_{\mathrm{meas},p}(f \vert s) 
= \lim_{n \to +\infty} \mathrm{E}_{\mathrm{meas},p}(f^{\wedge n} \vert s)
= \lim_{n \to +\infty} \upprev{\mathrm{A},p}(f^{\wedge n} \vert s)
= \upprev{\mathrm{A},p}(f \vert s),
\end{equation*}
where the second equality follows from Proposition~\ref{prop: unconditional precise measure is equal to game for bounded} and the third one from Theorem~\ref{theorem: upward convergence game}(\ref{theorem: upward convergence game 1}).
\end{proof}

Next, we want to drop the condition of $\mathscr{F}$-measurability and generalise towards the domain $\setofextvariables{}$ of all global extended real variables.
So we are looking for an upper expectation operator that appropriately extends \(\mathrm{E}_{\mathrm{meas},p}\) to the entire domain $\smash{\setofextvariables{}\times\situations{}}$.
Inspired by Theorem~\ref{theorem: Ville extended variables bounded below}, we simply suggest the following upper integral:
\begin{align*}
\mupprevpreciseone(f \vert s) 
\coloneqq \inf \Bigl\{ \mathrm{E}_{\mathrm{meas},p}(g \vert s) \colon g \in \setofextvariablesb{} , \, g \text{ is \(\mathscr{F}\)-measurable and } g \geq f \Bigr\} \text{ for all } f \in \setofextvariables{} \text{ and } s \in \situations{}.
\end{align*}
That it is valid to write \(\mathrm{E}_{\mathrm{meas},p}(g \vert s)\) for any \(g \in \setofextvariablesb{}\) that is \(\mathscr{F}\)-measurable, follows from the fact that \(\mathrm{E}_{\mathrm{meas},p}(g \vert s) = \int_\Omega g \dif \mathrm{P}_{\vert s}\) is guaranteed to exist for such a variable.
Let us now show that $\smash{\mupprevpreciseone}$ is indeed an extension of \(\mathrm{E}_{\mathrm{meas},p}\).

\begin{proposition}\label{prop: alternative extension is an extension}
For any \(f \in \setofextvariables{}\) and any \(s \in \situations{}\), we have that \/ \(\mupprevpreciseone(f \vert s) = \mathrm{E}_{\mathrm{meas},p}(f \vert s)\) if \/ \(\mathrm{E}_{\mathrm{meas},p}(f \vert s)\) exists.
\end{proposition}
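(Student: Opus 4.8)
The plan is to prove the two inequalities $\mupprevpreciseone(f\vert s)\geq\mathrm{E}_{\mathrm{meas},p}(f\vert s)$ and $\mupprevpreciseone(f\vert s)\leq\mathrm{E}_{\mathrm{meas},p}(f\vert s)$ separately, under the standing hypothesis that $\mathrm{E}_{\mathrm{meas},p}(f\vert s)$ exists, which in particular means $f$ is $\mathscr{F}$-measurable. Throughout I would use that for any $\mathscr{F}$-measurable $g\in\setofextvariablesb{}$ the integral $\int_{\Omega}g\dif\mathrm{P}_{\vert s}=\mathrm{E}_{\mathrm{meas},p}(g\vert s)$ is guaranteed to exist, so that these are exactly the variables competing in the infimum that defines $\mupprevpreciseone(f\vert s)$; note also that this set is nonempty, since it contains the lower cuts of $f$ used below.

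For $\mupprevpreciseone(f\vert s)\geq\mathrm{E}_{\mathrm{meas},p}(f\vert s)$, I would simply invoke monotonicity: for any $\mathscr{F}$-measurable $g\in\setofextvariablesb{}$ with $g\geq f$, both $\mathrm{E}_{\mathrm{meas},p}(g\vert s)$ and $\mathrm{E}_{\mathrm{meas},p}(f\vert s)$ exist, so \ref{prop measure: monotonicity} yields $\mathrm{E}_{\mathrm{meas},p}(g\vert s)\geq\mathrm{E}_{\mathrm{meas},p}(f\vert s)$; taking the infimum over all such $g$ gives the claim. This direction is immediate.

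For the reverse inequality, the key idea is to approximate $f$ from above, within $\setofextvariablesb{}$, by its lower cuts $f^{\vee c}$ as $c\to-\infty$; writing $f_n\coloneqq f^{\vee(-n)}$ for $n\in\natz{}$, each $f_n$ is $\mathscr{F}$-measurable (measurability is preserved under maxima with constants), is bounded below, and satisfies $f_n\geq f$, hence $\mupprevpreciseone(f\vert s)\leq\mathrm{E}_{\mathrm{meas},p}(f_n\vert s)$ for every $n$. It then suffices to show $\lim_{n\to+\infty}\mathrm{E}_{\mathrm{meas},p}(f_n\vert s)=\mathrm{E}_{\mathrm{meas},p}(f\vert s)$. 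If $\mathrm{E}_{\mathrm{meas},p}(f\vert s)=+\infty$ the inequality $\mupprevpreciseone(f\vert s)\leq\mathrm{E}_{\mathrm{meas},p}(f\vert s)$ is trivial; otherwise $\mathrm{E}_{\mathrm{meas},p}(f\vert s)<+\infty$, and from the definition of the Lebesgue integral this forces $\int_{\Omega}f^{\vee 0}\dif\mathrm{P}_{\vert s}<+\infty$. The sequence $\{f_n\}_{n\in\natz{}}$ is non-increasing, converges pointwise to $f$, consists of $\mathscr{F}$-measurable variables, and is dominated by the $\mathscr{F}$-measurable variable $f^{\vee 0}$, whose expectation $\mathrm{E}_{\mathrm{meas},p}(f^{\vee 0}\vert s)=\int_{\Omega}f^{\vee 0}\dif\mathrm{P}_{\vert s}$ is finite; hence \ref{lemma: limits for precise measure-theoretic expectation iii} applies and delivers the desired limit. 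Combining both inequalities completes the proof.

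The main point requiring care — and it is not a genuine obstacle — is the case split on whether $\mathrm{E}_{\mathrm{meas},p}(f\vert s)=+\infty$, which is needed because the convergence property \ref{lemma: limits for precise measure-theoretic expectation iii} for non-increasing sequences is only available once a dominating variable with finite expectation is in hand; the accompanying observation is that it is the lower cuts $f^{\vee(-n)}$, rather than $f$ itself (which need not be bounded below and so need not be an admissible competitor), that furnish the correct approximating sequence from above inside $\setofextvariablesb{}$.
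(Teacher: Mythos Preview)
Your proof is correct and follows essentially the same approach as the paper: the inequality $\mupprevpreciseone(f\vert s)\geq\mathrm{E}_{\mathrm{meas},p}(f\vert s)$ via monotonicity \ref{prop measure: monotonicity}, and the reverse via the lower cuts $f^{\vee(-n)}$ combined with the non-increasing convergence property \ref{lemma: limits for precise measure-theoretic expectation iii}, with the same case split to secure a dominating variable of finite expectation. The paper phrases the split as $\mathrm{E}_{\mathrm{meas},p}(f^{+}\vert s)=+\infty$ versus $<+\infty$, but since $\mathrm{E}_{\mathrm{meas},p}(f\vert s)$ exists this is equivalent to your split on $\mathrm{E}_{\mathrm{meas},p}(f\vert s)=+\infty$ versus $<+\infty$.
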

\begin{proof}
Suppose that \(\mathrm{E}_{\mathrm{meas},p}(f \vert s)\) exists, which means that the variable \(f\) is \(\mathscr{F}\)-measurable and that \(\mathrm{E}_{\mathrm{meas},p}(f^{+} \vert s)\) or \(\mathrm{E}_{\mathrm{meas},p}(f^{-} \vert s)\) is real-valued.
By the monotonicity [\ref{prop measure: monotonicity}] of $\mathrm{E}_{\mathrm{meas},p}$, we immediately have that
\begin{align}\label{Eq: prop: alternative extension is an extension}
\mupprevpreciseone(f \vert s) 
&= \inf \Bigl\{ \mathrm{E}_{\mathrm{meas},p}(g \vert s) \colon g \in \setofextvariablesb{} , \, g \text{ is \(\mathscr{F}\)-measurable and } g \geq f \Bigr\}  
\geq \mathrm{E}_{\mathrm{meas},p}(f \vert s).
\end{align}
If \(\mathrm{E}_{\mathrm{meas},p}(f^{+} \vert s) = +\infty\) and therefore \(\mathrm{E}_{\mathrm{meas},p}(f^{-} \vert s) \in \reals{}\) and \(\mathrm{E}_{\mathrm{meas},p}(f \vert s)=+\infty\), the desired equality follows trivially from Equation~\eqref{Eq: prop: alternative extension is an extension}.
We proceed to show that this is also the case if \(\mathrm{E}_{\mathrm{meas},p}(f^{+} \vert s) < +\infty\).

Consider the non-increasing sequence \(\{f^{\vee -n}\}_{n \in \natz{}}\) of lower cuts of $f$.
Then, for any \(n \in \natz{}\), the global variable \(f^{\vee -n}\) is bounded below and \(\mathscr{F}\)-measurable. 
Hence,
\begin{align*}
\mupprevpreciseone(f \vert s) 
&= \inf \Bigl\{ \mathrm{E}_{\mathrm{meas},p}(g \vert s) \colon g \in \setofextvariablesb{} , \, g \text{ is \(\mathscr{F}\)-measurable and } g \geq f \Bigr\} \\
&\leq \inf_{n \in \natz{}} \mathrm{E}_{\mathrm{meas},p}(f^{\vee -n} \vert s) 
= \lim_{n \to +\infty} \mathrm{E}_{\mathrm{meas},p}(f^{\vee -n} \vert s)
= \mathrm{E}_{\mathrm{meas},p}(f \vert s),
\end{align*}
where the second equality follows from the non-increasing character of the sequence \(\{f^{\vee -n}\}_{n \in \natz{}}\) and the monotonicity [\ref{prop measure: monotonicity}] of \(\mathrm{E}_{\mathrm{meas},p}\), and the final equality follows from \ref{lemma: limits for precise measure-theoretic expectation iii}, which we can use because \(f^{\vee -n} \leq f^{\vee 0} = f^+\) for all \(n \in \natz{}\) and \(\mathrm{E}_{\mathrm{meas},p}(f^{+} \vert s) < +\infty\) by assumption.
Combining this with Equation~\eqref{Eq: prop: alternative extension is an extension}, the desired equality again follows.
\end{proof}

Our next result shows that $\mupprevpreciseone$ is in fact the most conservative extension of $\mathrm{E}_{\mathrm{meas},p}$ that satisfies \ref{P monotonicity}.
\begin{proposition}\label{prop: extension of precise measure is most conservative}
$\mupprevpreciseone$ is the most conservative extension of \/ $\mathrm{E}_{\mathrm{meas},p}$ that satisfies \ref{P monotonicity}.
\end{proposition}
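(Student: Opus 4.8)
The plan is to split the statement into its two natural halves and dispatch each in turn. First I would check that $\mupprevpreciseone$ is itself a member of the class in question, that is, an extension of $\mathrm{E}_{\mathrm{meas},p}$ that satisfies \ref{P monotonicity}; then I would show that it dominates, pointwise on $\setofextvariables{}\times\situations{}$, every other such extension. Together these say exactly that $\mupprevpreciseone$ is the largest, hence (recalling from Section~\ref{Sect: Search of a global model} that ``more conservative'' means ``larger'') the most conservative, extension of $\mathrm{E}_{\mathrm{meas},p}$ compatible with \ref{P monotonicity}; uniqueness of a largest element is then automatic.

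For the first half, the extension property is already in hand: it is precisely Proposition~\ref{prop: alternative extension is an extension}. Monotonicity of $\mupprevpreciseone$ follows directly from its definition as an infimum: if $f \leq f'$ in $\setofextvariables{}$, then every $\mathscr{F}$-measurable $g \in \setofextvariablesb{}$ with $g \geq f'$ also has $g \geq f$, so the feasible set in the infimum defining $\mupprevpreciseone(f' \vert s)$ is contained in the one defining $\mupprevpreciseone(f \vert s)$; passing to the infimum over the larger set can only lower the value, whence $\mupprevpreciseone(f \vert s) \leq \mupprevpreciseone(f' \vert s)$. I would also record here that the feasible set is never empty, since the constant variable $+\infty$ is $\mathscr{F}$-measurable, bounded below, and dominates every $f$, so that $\mupprevpreciseone$ is a well-defined map into $\extreals{}$.

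For the second half, I would fix an arbitrary extension $\upprev{}{'}$ of $\mathrm{E}_{\mathrm{meas},p}$ that satisfies \ref{P monotonicity}, together with $f \in \setofextvariables{}$ and $s \in \situations{}$, and run a sandwiching argument. For any $\mathscr{F}$-measurable $g \in \setofextvariablesb{}$ with $g \geq f$, the integral $\mathrm{E}_{\mathrm{meas},p}(g \vert s)$ exists (this is the observation recalled just before this section's results, that $\mathscr{F}$-measurability together with boundedness below guarantees existence), so $\upprev{}{'}(g \vert s) = \mathrm{E}_{\mathrm{meas},p}(g \vert s)$ because $\upprev{}{'}$ is an extension, and $\upprev{}{'}(f \vert s) \leq \upprev{}{'}(g \vert s)$ by \ref{P monotonicity}. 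Hence $\upprev{}{'}(f \vert s) \leq \mathrm{E}_{\mathrm{meas},p}(g \vert s)$ for every feasible $g$, and taking the infimum over all of them yields $\upprev{}{'}(f \vert s) \leq \mupprevpreciseone(f \vert s)$, which is what was needed.

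I do not expect a genuine obstacle here: the argument is a two-line verification of monotonicity plus a one-line sandwich, and the only points requiring a little care are bookkeeping ones --- namely, that the class of variables over which the infimum defining $\mupprevpreciseone$ ranges ($\mathscr{F}$-measurable and bounded below) is exactly the class on which $\mathrm{E}_{\mathrm{meas},p}$ is guaranteed to be defined, and that this class is non-empty for every $f$ so that the infimum makes sense. Everything else is immediate from the definitions and from Proposition~\ref{prop: alternative extension is an extension}.
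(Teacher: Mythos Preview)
Your proposal is correct and follows essentially the same approach as the paper: both split into showing that $\mupprevpreciseone$ is itself an extension satisfying \ref{P monotonicity} (via Proposition~\ref{prop: alternative extension is an extension} and the infimum structure) and that any other such extension is dominated by it via the one-line sandwich through feasible $g$. The paper's version is slightly terser---it attributes monotonicity of $\mupprevpreciseone$ to monotonicity of $\mathrm{E}_{\mathrm{meas},p}$ rather than spelling out the feasible-set containment---but the underlying argument is identical.
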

\begin{proof}
Fix any \(f \in \setofextvariables{}\), any \(s \in \situations{}\) and any \(\upprev{p}' \colon \setofextvariables{} \times \situations{} \to \extreals{}\) that satisfies \ref{P monotonicity} and that coincides with \(\mathrm{E}_{\mathrm{meas},p}\) on its domain.
Then,
\begin{align*}
\mupprevpreciseone(f \vert s) 
&= \inf \Bigl\{ \mathrm{E}_{\mathrm{meas},p}(g \vert s) \colon g \in \setofextvariablesb{} , \, g \text{ is \(\mathscr{F}\)-measurable and } g \geq f \Bigr\} \\
&= \inf \Bigl\{ \upprev{p}'(g \vert s) \colon g \in \setofextvariablesb{} , \, g \text{ is \(\mathscr{F}\)-measurable and } g \geq f \Bigr\}
\geq \upprev{p}'(f \vert s),
\end{align*}
where the second equality follows from the fact that \(\smash{\mathrm{E}_{\mathrm{meas},p}(g \vert s)}\) exists for any $\mathscr{F}$-measurable variable $g\in\setofextvariablesb{}$, and the inequality follows from the fact that \(\smash{\upprev{p}'}\) satisfies monotonicity [\ref{P monotonicity}].
Hence, the proposition now follows immediately from the fact that, due to Proposition~\ref{prop: alternative extension is an extension}, $\mupprevpreciseone$ itself is an extension of $\mathrm{E}_{\mathrm{meas},p}$ and the fact that $\mupprevpreciseone$ satisfies \ref{P monotonicity} because $\mathrm{E}_{\mathrm{meas},p}$ is monotone [\ref{prop measure: monotonicity}].
\end{proof}

In Section~\ref{Sect: Search of a global model}, we argued that \ref{P compatibility}--\ref{P continuity} are desirable for a global upper expectation and, under those axioms, we put forward the most conservative upper expectation as our model of choice. 
Now, Proposition~\ref{prop: extension of precise measure is most conservative} guarantees that $\mupprevpreciseone$ is the most conservative extension of $\smash{\mathrm{E}_{\mathrm{meas},p}}$ only under \ref{P monotonicity}.
So it may very well be that $\smash{\mupprevpreciseone}$ does not necessarily satisfy the axioms \ref{P compatibility}--\ref{P continuity}.
Since we consider these axioms to be desirable, that would mean that $\mupprevpreciseone$ is a too conservative extension of $\smash{\mathrm{E}_{\mathrm{meas},p}}$, in the sense that it results in an unwarranted loss of information.
Our following result assures that this is not the case; the extension $\mupprevpreciseone$ coincides with $\upprev{\mathrm{A},p}$ and hence, $\mupprevpreciseone$ is guaranteed to satisfy \ref{P compatibility}--\ref{P continuity}.

\begin{theorem}\label{theorem: all models coincide for precise probability trees}
For any \(f \in \setofextvariables{}\) and any \(s \in \situations{}\), we have that \/ \(\mupprevpreciseone(f \vert s) = \upprev{\mathrm{A},p}(f \vert s)\).
\end{theorem}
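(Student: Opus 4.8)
The plan is to prove the two inequalities $\mupprevpreciseone(f \vert s) \leq \upprev{\mathrm{A},p}(f \vert s)$ and $\upprev{\mathrm{A},p}(f \vert s) \leq \mupprevpreciseone(f \vert s)$ separately, using in both cases Theorem~\ref{theorem: Ville extended variables bounded below}, which already identifies $\mathrm{E}_{\mathrm{meas},p}$ and $\upprev{\mathrm{A},p}$ on the class of all $\mathscr{F}$-measurable variables in $\setofextvariablesb{}$. Throughout, I would fix $f \in \setofextvariables{}$ and $s \in \situations{}$.

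For the first inequality, I would start from the fact that $\upprev{\mathrm{A},p}$ is the operator $\upprev{\mathrm{A}}$ for the imprecise probability tree associated with $p$, hence coincides with $\upprevvovkk{}$ (Theorem~\ref{theorem: Vovk is the largest}) and so satisfies \ref{P natural extension} (Proposition~\ref{Prop: vovk satisfies P1-P7}); this gives $\upprev{\mathrm{A},p}(f \vert s) = \inf\{\upprev{\mathrm{A},p}(g \vert s) \colon g \in \setoflimitsoffinmeasextb{},\, g \geq f\}$. The key auxiliary observation is that every $g \in \setoflimitsoffinmeasextb{}$ is $\mathscr{F}$-measurable: it is a pointwise limit of finitary gambles, each of which is $\mathscr{F}$-measurable (an $n$-measurable variable is constant on the cylinder events, which lie in $\mathscr{F}$), and $\mathscr{F}$-measurability is preserved under pointwise limits. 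Since also $\setoflimitsoffinmeasextb{} \subseteq \setofextvariablesb{}$ by definition, Theorem~\ref{theorem: Ville extended variables bounded below} applies to each such $g$ and lets me rewrite $\upprev{\mathrm{A},p}(g \vert s) = \mathrm{E}_{\mathrm{meas},p}(g \vert s)$; the resulting infimum is then taken over a subfamily of the family over which $\mupprevpreciseone(f \vert s)$ is an infimum, whence $\upprev{\mathrm{A},p}(f \vert s) \geq \mupprevpreciseone(f \vert s)$.

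For the second inequality, I would argue as in the proof of Proposition~\ref{prop: extension of precise measure is most conservative}: for any $\mathscr{F}$-measurable $g \in \setofextvariablesb{}$ with $g \geq f$, monotonicity of $\upprev{\mathrm{A},p}$ [\ref{P monotonicity}, which holds since $\upprev{\mathrm{A},p} \in \setofupprev{}_{1-5}(\tree{})$] gives $\upprev{\mathrm{A},p}(f \vert s) \leq \upprev{\mathrm{A},p}(g \vert s)$, and Theorem~\ref{theorem: Ville extended variables bounded below} gives $\upprev{\mathrm{A},p}(g \vert s) = \mathrm{E}_{\mathrm{meas},p}(g \vert s)$; taking the infimum over all such $g$ yields $\upprev{\mathrm{A},p}(f \vert s) \leq \mupprevpreciseone(f \vert s)$. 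I would spell this step out rather than quote Proposition~\ref{prop: extension of precise measure is most conservative} directly, since that proposition's hypothesis asks for coincidence of the extension with $\mathrm{E}_{\mathrm{meas},p}$ on the \emph{entire} domain of the latter, whereas Theorem~\ref{theorem: Ville extended variables bounded below} only supplies coincidence on the $\mathscr{F}$-measurable members of $\setofextvariablesb{}$ --- which, as this argument shows, is all that is actually needed.

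Combining the two inequalities gives the claimed equality. The only genuine content, apart from routine bookkeeping, is imported from Theorem~\ref{theorem: Ville extended variables bounded below} (and, behind it, Proposition~\ref{prop: unconditional precise measure is equal to game for bounded}, which rests on Ville's martingale theorem); the point requiring the most care is purely set-theoretic --- making sure the index set of the infimum in \ref{P natural extension} ($g \in \setoflimitsoffinmeasextb{}$) is contained in that of the infimum defining $\mupprevpreciseone$ ($\mathscr{F}$-measurable $g \in \setofextvariablesb{}$) --- which is exactly the $\mathscr{F}$-measurability remark above. I do not anticipate any substantial obstacle.
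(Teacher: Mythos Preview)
Your proposal is correct and follows essentially the same approach as the paper's own proof: both directions rest on Theorem~\ref{theorem: Ville extended variables bounded below}, with one inequality coming from monotonicity of \(\upprev{\mathrm{A},p}\) applied to the index set of \(\mupprevpreciseone\), and the other from \ref{P natural extension} together with the observation that every \(g\in\setoflimitsoffinmeasextb{}\) is \(\mathscr{F}\)-measurable and bounded below. The only cosmetic difference is that the paper, for the inequality \(\upprev{\mathrm{A},p}(f\vert s)\geq\mupprevpreciseone(f\vert s)\), routes the argument through \(\mupprevpreciseone(g\vert s)\) via Proposition~\ref{prop: alternative extension is an extension} and then invokes monotonicity of \(\mupprevpreciseone\) (Proposition~\ref{prop: extension of precise measure is most conservative}), whereas you compare the two infima directly; your version is marginally more streamlined but not substantively different.
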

\begin{proof}
For any \(f \in \setofextvariables{}\) and any \(s \in \situations{}\), we immediately find that
\begin{align*}
\mupprevpreciseone(f \vert s) 
&= \inf \Bigl\{ \mathrm{E}_{\mathrm{meas},p}(g \vert s)\colon g \in \setofextvariablesb{} , \, g \text{ is \(\mathscr{F}\)-measurable and } g \geq f \Bigr\}  \\
&= \inf \Bigl\{ \upprev{\mathrm{A},p}(g \vert s)\colon g \in \setofextvariablesb{} , \, g \text{ is \(\mathscr{F}\)-measurable and } g \geq f \Bigr\}
\geq \upprev{\mathrm{A},p}(f \vert s),
\end{align*}
where the second equality follows from Theorem~\ref{theorem: Ville extended variables bounded below} and the inequality from the monotonicity [\ref{P monotonicity}] of \(\upprev{\mathrm{A},p}\).
To show that the converse inequality holds, we will use the fact that \(\smash{\upprev{\mathrm{A},p}}\) satisfies \ref{P natural extension} (this follows from Proposition~\ref{Prop: vovk satisfies P1-P7} and Theorem~\ref{theorem: Vovk is the largest}).

Consider any \(g \in \setoflimitsoffinmeasextb{}\).
Since \(g\) is the pointwise limit of a sequence \(\{g_n\}_{n \in \natz{}}\) of finitary gambles \(g_n \in \setoffingambles{}\) and since any finitary gamble is clearly \(\mathscr{F}\)-measurable, \(g\) is the pointwise limit of a sequence of \(\mathscr{F}\)-measurable global gambles.
Then it follows that \(g\) itself is also \(\mathscr{F}\)-measurable ($\mathscr{F}$-measurability is preserved under pointwise convergence). 
Furthermore, by the definition of \(\setoflimitsoffinmeasextb{}\), \(g\) is also bounded below.
Hence, \(\mathrm{E}_{\mathrm{meas},p}(g \vert s)\) exists and due to Theorem~\ref{theorem: Ville extended variables bounded below}, we have that \(\smash{\upprev{\mathrm{A},p}(g \vert s)} = \smash{\mathrm{E}_{\mathrm{meas},p}(g \vert s)}\).
Since $\mupprevpreciseone$ is an extension of $\mathrm{E}_{\mathrm{meas},p}$ by Proposition~\ref{prop: alternative extension is an extension}, this implies that \(\smash{\upprev{\mathrm{A},p}(g \vert s)} = \mupprevpreciseone(g \vert s)\).
We conclude that
\begin{align*}
	\upprev{\mathrm{A},p} (f \vert s) 
	&= \inf{\Big\{ \upprev{\mathrm{A},p}(g \vert s) \colon g \in \setoflimitsoffinmeasextb{} \text{ and }  g \geq f  \Big\}}
	= \inf{\Big\{ \mupprevpreciseone(g \vert s) \colon g \in \setoflimitsoffinmeasextb{} \text{ and }  g \geq f  \Big\}} 
	\geq  \mupprevpreciseone(f \vert s),
\end{align*}
where the first step follows from \ref{P natural extension} and the last step follows from the monotonicity [\ref{P monotonicity}] of \(\mupprevpreciseone\) (as established by Proposition~\ref{prop: extension of precise measure is most conservative}).
\end{proof}

Theorem~\ref{theorem: all models coincide for precise probability trees} does not only show us that \(\mupprevpreciseone\) satisfies \ref{P compatibility}--\ref{P continuity}, but also, and more importantly, that it is the most conservative global upper expectation satisfying \ref{P compatibility}--\ref{P continuity}.
Furthermore, if we combine Theorem~\ref{theorem: all models coincide for precise probability trees} with Proposition~\ref{prop: extension of precise measure is most conservative}, it can also easily be seen that it is the most conservative extension of \(\mathrm{E}_{\mathrm{meas},p}\) satisfying \ref{P compatibility}--\ref{P continuity}.

\begin{corollary}\label{corollary: mupprevpreciseone is most conservative under P1-P5}
\(\mupprevpreciseone\) is the most conservative extension of \/ \(\mathrm{E}_{\mathrm{meas},p}\) that satisfies \ref{P compatibility}--\ref{P continuity}.
\end{corollary}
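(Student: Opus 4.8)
The plan is to obtain this corollary as a direct repackaging of results already proved in this subsection. Recall that ``\(\mupprevpreciseone\) is the most conservative extension of \(\mathrm{E}_{\mathrm{meas},p}\) satisfying \ref{P compatibility}--\ref{P continuity}'' unpacks into three claims: (i) \(\mupprevpreciseone\) agrees with \(\mathrm{E}_{\mathrm{meas},p}\) wherever the latter is defined; (ii) \(\mupprevpreciseone\) itself satisfies \ref{P compatibility}--\ref{P continuity}; and (iii) every global upper expectation \(\upprev{}{'}\) that extends \(\mathrm{E}_{\mathrm{meas},p}\) and satisfies \ref{P compatibility}--\ref{P continuity} is dominated by \(\mupprevpreciseone\), i.e. \(\upprev{}{'}(f\vert s)\leq\mupprevpreciseone(f\vert s)\) for all \(f\in\setofextvariables{}\) and \(s\in\situations{}\) (higher upper expectations being the more conservative ones). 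Throughout, \(\tree{}\) denotes the imprecise probability tree induced by \(p\), whose local model in each \(s\) is the linear expectation \(\lupprev{s}(h)\coloneqq\sum_{x\in\statespace{}}h(x)p(x\vert s)\) for \(h\in\setofgengambles{}(\statespace{})\).

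For (i) I would simply invoke Proposition~\ref{prop: alternative extension is an extension}. For (ii) the single key input is Theorem~\ref{theorem: all models coincide for precise probability trees}, which identifies \(\mupprevpreciseone\) with \(\upprev{\mathrm{A},p}\) on the whole domain \(\setofextvariables{}\times\situations{}\); since \(\upprev{\mathrm{A},p}\in\setofupprev{}_{1-5}(\tree{})\) by definition of \(\upprev{\mathrm{A}}\) (or, equivalently, via Proposition~\ref{Prop: vovk satisfies P1-P7} together with Theorem~\ref{theorem: Vovk is the largest}), it satisfies \ref{P compatibility}--\ref{P continuity}, hence so does \(\mupprevpreciseone\). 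Note that \ref{P compatibility} for \(\mupprevpreciseone\) is exactly compatibility with the linear local models of \(\tree{}\) above, which is in turn consistent with \(\mathrm{E}_{\mathrm{meas},p}\) through Lemma~\ref{Lemma: compatibility} and the product-rule definition of \(\mathrm{P}(\cdot\vert s)\).

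For (iii), the cleanest route uses Proposition~\ref{prop: extension of precise measure is most conservative}: any \(\upprev{}{'}\) extending \(\mathrm{E}_{\mathrm{meas},p}\) and satisfying \ref{P compatibility}--\ref{P continuity} in particular satisfies \ref{P monotonicity}, so by Proposition~\ref{prop: extension of precise measure is most conservative} it is dominated by \(\mupprevpreciseone\), the most conservative extension of \(\mathrm{E}_{\mathrm{meas},p}\) satisfying \ref{P monotonicity}. (Alternatively one may argue that \(\upprev{}{'}\in\setofupprev{}_{1-5}(\tree{})\), so Theorem~\ref{theorem: Vovk is the largest}---or Theorem~\ref{Theorem: smallest vovk general} applied to \(\upprev{\mathrm{A},p}\), which satisfies \ref{P iterated general}, \ref{P continuity n-meas} and \ref{P natural extension} by Proposition~\ref{Prop: vovk satisfies P1-P7}---gives \(\upprev{}{'}\leq\upprev{\mathrm{A},p}=\mupprevpreciseone\), the last equality by Theorem~\ref{theorem: all models coincide for precise probability trees}.) Combining (i)--(iii) yields the corollary. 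I do not expect any genuine obstacle here; the only points needing care are bookkeeping: reading ``more conservative'' as ``pointwise larger'', taking the axioms relative to the tree \(\tree{}\) associated with \(p\), and remembering that an ``extension'' is constrained only where \(\mathrm{E}_{\mathrm{meas},p}\) is actually defined---which is precisely the hypothesis form already used in Propositions~\ref{prop: alternative extension is an extension} and~\ref{prop: extension of precise measure is most conservative}.
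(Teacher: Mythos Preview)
Your proposal is correct and mirrors the paper's own proof: the paper also derives (ii) from Theorem~\ref{theorem: all models coincide for precise probability trees} (identifying \(\mupprevpreciseone\) with \(\upprev{\mathrm{A},p}\in\setofupprev{}_{1-5}(\tree{})\)) and (iii) from Proposition~\ref{prop: extension of precise measure is most conservative} (any extension satisfying \ref{P compatibility}--\ref{P continuity} in particular satisfies \ref{P monotonicity}, hence is dominated). Your explicit mention of (i) via Proposition~\ref{prop: alternative extension is an extension} is fine but redundant, since Proposition~\ref{prop: extension of precise measure is most conservative} already records that \(\mupprevpreciseone\) is an extension.
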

\begin{proof}
We know from Proposition~\ref{prop: extension of precise measure is most conservative} that \(\mupprevpreciseone\) is the most conservative extension of \(\mathrm{E}_{\mathrm{meas},p}\) that satisfies \ref{P monotonicity}.
This implies in particular that \(\mupprevpreciseone\) is more (or at least as) conservative than any extension of \(\mathrm{E}_{\mathrm{meas},p}\) that satisfies \ref{P compatibility}--\ref{P continuity}.
Since \(\mupprevpreciseone\) itself satisfies \ref{P compatibility}--\ref{P continuity} due to Theorem~\ref{theorem: all models coincide for precise probability trees}, it follows that \(\mupprevpreciseone\) is indeed the most conservative extension of \(\mathrm{E}_{\mathrm{meas},p}\) satisfying \ref{P compatibility}--\ref{P continuity}.
\end{proof}

Another interesting consequence of Theorem~\ref{theorem: all models coincide for precise probability trees} and Proposition~\ref{prop: extension of precise measure is most conservative} is that $\upprev{\mathrm{A},p}$ turns out to coincide with $\mathrm{E}_{\mathrm{meas},p}$ on the entire domain where it is defined, or, in other words, our model $\smash{\upprev{\mathrm{A},p}}$ is an extension of the traditional measure-theoretic expectation $\mathrm{E}_{\mathrm{meas},p}$.

\begin{corollary}\label{corollary: our model coincides with traditional measure}
For any $f\in\setofextvariables{}$ and any $s\in\situations{}$, we have that \/ $\upprev{\mathrm{A},p}(f\vert s) = \mathrm{E}_{\mathrm{meas},p}(f\vert s)$ if \/ $\mathrm{E}_{\mathrm{meas},p}(f\vert s)$ exists.
\end{corollary}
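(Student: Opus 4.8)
The plan is to obtain this corollary as an immediate composition of two results that have already been established, namely Proposition~\ref{prop: alternative extension is an extension} and Theorem~\ref{theorem: all models coincide for precise probability trees}. The corollary asserts that, whenever \(\mathrm{E}_{\mathrm{meas},p}(f\vert s)\) exists, the game-theoretic/axiomatic model \(\upprev{\mathrm{A},p}\) returns exactly that value, so what we really want is a chain of equalities \(\upprev{\mathrm{A},p}(f\vert s)=\mupprevpreciseone(f\vert s)=\mathrm{E}_{\mathrm{meas},p}(f\vert s)\) under the stated hypothesis.

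First I would fix an arbitrary \(f\in\setofextvariables{}\) and \(s\in\situations{}\) and assume that \(\mathrm{E}_{\mathrm{meas},p}(f\vert s)\) exists. The first step is to invoke Theorem~\ref{theorem: all models coincide for precise probability trees}, which holds for \emph{all} global extended real variables and all situations without any existence caveat, to conclude that \(\mupprevpreciseone(f\vert s)=\upprev{\mathrm{A},p}(f\vert s)\). The second step is to invoke Proposition~\ref{prop: alternative extension is an extension}, whose hypothesis — existence of \(\mathrm{E}_{\mathrm{meas},p}(f\vert s)\) — is precisely what we have assumed, to conclude that \(\mupprevpreciseone(f\vert s)=\mathrm{E}_{\mathrm{meas},p}(f\vert s)\). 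Combining these two identities through the common term \(\mupprevpreciseone(f\vert s)\) yields \(\upprev{\mathrm{A},p}(f\vert s)=\mathrm{E}_{\mathrm{meas},p}(f\vert s)\), which is the claim.

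There is no real obstacle here: the corollary is, by design, a bookkeeping consequence of the two cited results, and the only thing to be careful about is matching hypotheses — that Theorem~\ref{theorem: all models coincide for precise probability trees} is unconditional while Proposition~\ref{prop: alternative extension is an extension} requires the existence of the measure-theoretic expectation, which is exactly the hypothesis of the corollary. One could, if desired, add the remark that this exhibits \(\upprev{\mathrm{A},p}\) (equivalently \(\upprevvovkk{}\)) as a genuine extension of the traditional measure-theoretic expectation to all of \(\setofextvariables{}\times\situations{}\), but that observation is already made in the surrounding text and need not be part of the proof itself.

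\begin{proof}
Fix any \(f\in\setofextvariables{}\) and any \(s\in\situations{}\), and suppose that \(\mathrm{E}_{\mathrm{meas},p}(f\vert s)\) exists.
By Theorem~\ref{theorem: all models coincide for precise probability trees}, we have that \(\mupprevpreciseone(f\vert s)=\upprev{\mathrm{A},p}(f\vert s)\).
On the other hand, since \(\mathrm{E}_{\mathrm{meas},p}(f\vert s)\) exists, Proposition~\ref{prop: alternative extension is an extension} yields that \(\mupprevpreciseone(f\vert s)=\mathrm{E}_{\mathrm{meas},p}(f\vert s)\).
Combining both equalities, we conclude that \(\upprev{\mathrm{A},p}(f\vert s)=\mathrm{E}_{\mathrm{meas},p}(f\vert s)\).
\end{proof}
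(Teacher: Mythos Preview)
Your proof is correct and follows essentially the same route as the paper: combine Theorem~\ref{theorem: all models coincide for precise probability trees} with the fact that \(\mupprevpreciseone\) extends \(\mathrm{E}_{\mathrm{meas},p}\). The only cosmetic difference is that the paper cites Proposition~\ref{prop: extension of precise measure is most conservative} (which contains the extension property as part of its statement) whereas you cite Proposition~\ref{prop: alternative extension is an extension} directly; your choice is arguably the more precise one, since it is exactly the statement needed.
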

\begin{proof}
Immediate consequence of Theorem~\ref{theorem: all models coincide for precise probability trees} and Proposition~\ref{prop: extension of precise measure is most conservative}.
\end{proof}

Now, before we continue to consider the more general case of imprecise probability trees, we briefly want to come back to our choice of the extension $\smash{\mupprevpreciseone{}}$.
Recall that we defined it as an upper approximation---called an upper integral---with respect to all $\mathscr{F}$-measurable variables in $\setofextvariablesb{}$.
However, one could rightfully wonder why we approximate with respect to this particular domain.
It turned out to be a suitable choice, as Corollary~\ref{corollary: mupprevpreciseone is most conservative under P1-P5} shows, but there was a priori no concrete reason for doing so.
We could just as well have adopted the following alternative upper integral:\footnote{This upper integral seems to be the one used by Shafer and Vovk in \cite[Chapter~9]{Vovk2019finance}.}
\begin{align*}
\mupprevprecisetwo{}(f \vert s) 
\coloneqq \inf \Biggl\{ \mathrm{E}_{\mathrm{meas},p}(g \vert s) \colon g\in\setofextvariables{}, \mathrm{E}_{\mathrm{meas},p}(g \vert s) \text{ exists and } g \geq f \Biggr\} \text{ for all } f \in \setofextvariables{} \text{ and } s \in \situations{}.
\end{align*}
The upper expectation $\smash{\mupprevprecisetwo{}}$ is an extension of $\smash{\mathrm{E}_{\mathrm{meas},p}}$ because \(\smash{\mathrm{E}_{\mathrm{meas},p}}\) is monotone [\ref{prop measure: monotonicity}].
Moreover, it is also clear that $\smash{\mupprevprecisetwo{}(f\vert s)}$ is dominated by $\smash{\mupprevpreciseone{}(f\vert s)}$ for all $f\in\setofextvariables{}$ and all $s\in\situations{}$ because the infimum in $\smash{\mupprevprecisetwo{}}$ is taken over an equal or larger set than the set over which the infimum is taken in $\mupprevpreciseone{}$.
Hence, one could be tempted to adopt the upper integral $\mupprevprecisetwo{}$ instead of $\mupprevpreciseone{}$ with the aim of obtaining a more informative extension---recall that we regard higher upper expectations as less informative.
However, it can straightforwardly be shown that this is only vain hope.

\begin{proposition}\label{prop: two extensions of measure coincide}
We have that \/ $\mupprevpreciseone{}(f\vert s) = \mupprevprecisetwo{}(f\vert s)$ for all $f\in\setofextvariables{}$ and all $s\in\situations{}$.
\end{proposition}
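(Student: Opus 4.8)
The plan is to prove the two inequalities separately, the inequality $\mupprevprecisetwo{}(f\vert s) \leq \mupprevpreciseone{}(f\vert s)$ having already been observed above (the infimum defining $\mupprevprecisetwo{}$ ranges over a set containing the one defining $\mupprevpreciseone{}$, since every $\mathscr{F}$-measurable $g\in\setofextvariablesb{}$ has an existing expectation $\mathrm{E}_{\mathrm{meas},p}(g\vert s)$). So the entire task is the reverse inequality, and for that I would show that $\mupprevpreciseone{}(f\vert s)\leq\mathrm{E}_{\mathrm{meas},p}(g\vert s)$ for every $g\in\setofextvariables{}$ such that $\mathrm{E}_{\mathrm{meas},p}(g\vert s)$ exists and $g\geq f$; taking the infimum over all such $g$ then yields $\mupprevpreciseone{}(f\vert s)\leq\mupprevprecisetwo{}(f\vert s)$.

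To this end, fix such a $g$. Since $\mathrm{E}_{\mathrm{meas},p}(g\vert s)$ exists, $g$ is $\mathscr{F}$-measurable, hence so is each lower cut $g^{\vee -n}$, $n\in\natz{}$, because $\mathscr{F}$-measurability is preserved under taking maxima; moreover $g^{\vee -n}$ is bounded below and satisfies $g^{\vee -n}\geq g\geq f$. Therefore each $g^{\vee -n}$ is admissible in the infimum defining $\mupprevpreciseone{}(f\vert s)$, which gives $\mupprevpreciseone{}(f\vert s)\leq\mathrm{E}_{\mathrm{meas},p}(g^{\vee -n}\vert s)$ for all $n\in\natz{}$, and hence
\begin{equation*}
\mupprevpreciseone{}(f\vert s)\leq\inf_{n\in\natz{}}\mathrm{E}_{\mathrm{meas},p}(g^{\vee -n}\vert s)=\lim_{n\to+\infty}\mathrm{E}_{\mathrm{meas},p}(g^{\vee -n}\vert s),
\end{equation*}
the last equality holding since $\{g^{\vee -n}\}_{n\in\natz{}}$ is non-increasing and $\mathrm{E}_{\mathrm{meas},p}$ is monotone [\ref{prop measure: monotonicity}].

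It then remains to identify this limit with $\mathrm{E}_{\mathrm{meas},p}(g\vert s)$. If $\mathrm{E}_{\mathrm{meas},p}(g\vert s)=+\infty$ the desired inequality is immediate from the display. Otherwise one has $\mathrm{E}_{\mathrm{meas},p}(g^{+}\vert s)<+\infty$ --- indeed, if it were $+\infty$ then, since $\mathrm{E}_{\mathrm{meas},p}(g^{-}\vert s)$ is real because $\mathrm{E}_{\mathrm{meas},p}(g\vert s)$ exists, we would get $\mathrm{E}_{\mathrm{meas},p}(g\vert s)=+\infty$, a contradiction --- so I would apply \ref{lemma: limits for precise measure-theoretic expectation iii} to the non-increasing sequence $\{g^{\vee -n}\}_{n\in\natz{}}$ with dominating variable $f^{\ast}\coloneqq g^{+}=g^{\vee 0}$, noting that $g^{\vee -n}\leq g^{\vee 0}$ for all $n$ and $\mathrm{E}_{\mathrm{meas},p}(g^{+}\vert s)<+\infty$, and that the pointwise limit of the $g^{\vee -n}$ is $g$. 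This gives $\lim_{n\to+\infty}\mathrm{E}_{\mathrm{meas},p}(g^{\vee -n}\vert s)=\mathrm{E}_{\mathrm{meas},p}(g\vert s)$, whence $\mupprevpreciseone{}(f\vert s)\leq\mathrm{E}_{\mathrm{meas},p}(g\vert s)$ in all cases. Taking the infimum over all admissible $g$ concludes.

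This argument is essentially a copy of the proof of Proposition~\ref{prop: alternative extension is an extension}, with the lower cuts of $g$ playing the role there played by the lower cuts of $f$. The only point requiring any care --- and really the only thing beyond bookkeeping --- is the case distinction on whether $\mathrm{E}_{\mathrm{meas},p}(g^{+}\vert s)$ is finite, which is precisely what is needed in order to legitimately invoke the dominated/monotone convergence property \ref{lemma: limits for precise measure-theoretic expectation iii}; I do not anticipate any further obstacle.
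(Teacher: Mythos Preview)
Your argument is correct. The paper's own proof takes a slightly different, more modular route: instead of unpacking the lower-cut argument inline, it invokes Proposition~\ref{prop: alternative extension is an extension} to replace each $\mathrm{E}_{\mathrm{meas},p}(g\vert s)$ in the infimum defining $\mupprevprecisetwo{}(f\vert s)$ by $\mupprevpreciseone{}(g\vert s)$, and then appeals to the monotonicity of $\mupprevpreciseone{}$ (established in Proposition~\ref{prop: extension of precise measure is most conservative}) to bound that infimum below by $\mupprevpreciseone{}(f\vert s)$. Your approach is essentially the same idea with the proof of Proposition~\ref{prop: alternative extension is an extension} unrolled in place --- as you correctly observe --- and has the minor advantage of not needing to cite the monotonicity of $\mupprevpreciseone{}$ separately, since you work directly with the defining infimum. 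The paper's version is shorter because the heavy lifting was already done in the two preceding propositions.
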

\begin{proof}
Fix any $f\in\setofextvariables{}$ and any $s\in\situations{}$.
That $\mupprevpreciseone{}(f\vert s) \geq \mupprevprecisetwo{}(f\vert s)$ follows immediately from the fact that $\mathrm{E}_{\mathrm{meas},p}(g \vert s)$ exists for each $\mathscr{F}$-measurable variable $g\in\setofextvariablesb{}$ and therefore, that the infimum in $\mupprevprecisetwo{}(f\vert s)$ is taken over a set that is at least as large as the set over which the infimum is taken in $\mupprevpreciseone{}(f\vert s)$.
On the other hand, we have that
\begin{align*}
\mupprevprecisetwo{}(f \vert s) 
&= \inf \Biggl\{ \mathrm{E}_{\mathrm{meas},p}(g \vert s) \colon  g\in\setofextvariables{}, \mathrm{E}_{\mathrm{meas},p}(g \vert s) \text{ exists and } g \geq f \Biggr\} \\
&= \inf \Biggl\{ \mupprevpreciseone{}(g \vert s) \colon  g\in\setofextvariables{},\mathrm{E}_{\mathrm{meas},p}(g \vert s) \text{ exists and } g \geq f \Biggr\} 
\geq \mupprevpreciseone{}(f \vert s),
\end{align*}
where the second equality follows from Proposition~\ref{prop: alternative extension is an extension} and the inequality follows from the monotonicity of $\mupprevpreciseone{}$ (see Proposition~\ref{prop: extension of precise measure is most conservative}).
\end{proof}

Hence, both measure-theoretic extensions $\mupprevpreciseone{}$ and $\mupprevprecisetwo{}$ are equal and therefore, by Theorem~\ref{theorem: all models coincide for precise probability trees}, both of them coincide with our model $\upprev{\mathrm{A},p}$ which, on its turn, coincides with the game-theoretic upper expectation $\upprevvovkk{}$ according to Theorem~\ref{theorem: Vovk is the largest}.
In summary, we conclude that, if the local models are precise, all of the three possible approaches---the game-theoretic, the measure-theoretic and ours---are equivalent.

\subsection*{Imprecise Probability Trees}\label{Sect: Measure-theoretic expectations in imprecise probability trees}

Let us now turn to the general case where we consider an imprecise probability tree \(\tree{}\) for which the local models \(\lupprev{s}\) are (boundedly) coherent upper expectations that are not necessarily self-conjugate.  
As we have explained earlier, these local models can equivalently be represented by sets \(\mathbb{P}_s\) of probability mass functions on \(\statespace\), where \(\smash{\mathbb{P}_s\coloneqq\mathbb{P}_{\mbox{\tiny$\lupprev{s}$}}}\) is related to \(\lupprev{s}\) according to Equations~\eqref{Eq: link upprev and mass functions} and~\eqref{Eq: second link upprev and mass functions} for all \(s \in \situations{}\).
We call a precise probability tree \(p \colon s \in \situations{} \mapsto p(\cdot \vert s)\) \emph{compatible} with the imprecise probability tree \(\tree{}\) and write that \(p \sim \tree{}\) if \(p(\cdot \vert s) \in \mathbb{P}_s\) for all \(s \in \situations{}\).
In other words, a compatible precise probability tree \(p \sim \tree{}\) corresponds to a possible selection, where a local probability mass function \(p(\cdot \vert s)\) is chosen from the set \(\mathbb{P}_s\) for each situation \(s \in \situations{}\).
Now, for any such compatible tree \(p \sim \tree{}\), we can consider the global upper expectation \(\upprev{\mathrm{meas},p}\coloneqq\mupprevpreciseone{}=\mupprevprecisetwo{}\) as defined in the previous section.
Hence, instead of a single upper expectation, we now obtain a set of compatible global upper expectations.
We define the \emph{global measure-theoretic upper expectation} \(\upprev{\mathrm{meas}}\) as the upper envelope over all such compatible upper expectations:
\begin{align*}
\upprev{\mathrm{meas}}(f \vert s) \coloneqq \sup \, \Bigl\{ \upprev{\mathrm{meas},p}(f \vert s) \colon p \sim \tree{} \Bigr\} \text{ for all } f \in \setofextvariables{} \text{ and } s \in \situations{}.
\end{align*}
This definition is in correspondence with the usual measure-theoretic view towards imprecision where it is regarded as partial information about a single, unknown probability measure.
Upper (and lower) expectations are then considered as upper (and lower) bounds on the set of all `possible' linear expectations.
A similar, yet unconditional version of this measure-theoretic upper expectation is also adopted by Shafer and Vovk in \cite[Chapter~9]{Vovk2019finance}.

As we will now show, the measure-theoretic upper expectation \(\upprev{\mathrm{meas}}\) satisfies our axioms \ref{P compatibility}--\ref{P continuity}.
This implies, by Theorem~\ref{theorem: Vovk is the largest}, that \(\upprev{\mathrm{meas}}\) is always dominated by the global upper expectation \(\upprev{\mathrm{A}}\), and therefore also by \(\upprevvovkk{}\).
Moreover, that the operator \(\upprev{\mathrm{meas}}\) is entirely based on a measure-theoretic view towards uncertainty, yet turns out to satisfy our axioms \ref{P compatibility}--\ref{P continuity}, can only be seen as an additional motivation for the use of these axioms.

\begin{theorem}\label{Theorem: measure satisfies P1-P5}
The global measure-theoretic upper expectation \(\mupprev\) is an element of \/ \(\setofupprev{}_{1-5}(\tree{})\). 
\end{theorem}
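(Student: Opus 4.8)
The plan is to exploit that $\upprev{\mathrm{meas}}$ is, by construction, a pointwise upper envelope of operators that we already understand completely. Indeed, $\upprev{\mathrm{meas}}$ is by definition an extended real-valued map on $\setofextvariables{}\times\situations{}$, so it is a global upper expectation, and it only remains to verify \ref{P compatibility}--\ref{P continuity}. For every compatible precise probability tree $p\sim\tree{}$, Theorem~\ref{theorem: all models coincide for precise probability trees} tells us that the operator $\upprev{\mathrm{meas},p}$ coincides with $\upprev{\mathrm{A},p}$, which belongs to $\setofupprev{}_{1-5}(\tree{})$ and moreover satisfies \ref{P iterated general}, by Proposition~\ref{Prop: vovk satisfies P1-P7} together with Theorem~\ref{theorem: Vovk is the largest}. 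Hence $\upprev{\mathrm{meas}}(f\vert s)=\sup_{p\sim\tree{}}\upprev{\mathrm{A},p}(f\vert s)$ for all $f\in\setofextvariables{}$ and all $s\in\situations{}$, and each of \ref{P compatibility}--\ref{P continuity} will be obtained by pushing the corresponding property of the $\upprev{\mathrm{A},p}$ through this supremum.

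I would first deal with the four straightforward axioms. For \ref{P monotonicity}, if $f\leq g$ then $\upprev{\mathrm{A},p}(f\vert s)\leq\upprev{\mathrm{A},p}(g\vert s)$ for every $p\sim\tree{}$, and taking suprema preserves the inequality. For \ref{P continuity}, given a uniformly bounded below sequence $\{f_n\}_{n\in\natz{}}$ of finitary gambles converging pointwise to $f$, I would combine \ref{P continuity} for $\upprev{\mathrm{A},p}$ with the termwise bound $\upprev{\mathrm{A},p}(f_n\vert s)\leq\upprev{\mathrm{meas}}(f_n\vert s)$ and monotonicity of $\limsup$ to get $\upprev{\mathrm{A},p}(f\vert s)\leq\limsup_{n\to+\infty}\upprev{\mathrm{A},p}(f_n\vert s)\leq\limsup_{n\to+\infty}\upprev{\mathrm{meas}}(f_n\vert s)$, and then take the supremum over $p$. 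For \ref{P compatibility}, I would use that \ref{P compatibility} for $\upprev{\mathrm{A},p}$ gives $\upprev{\mathrm{A},p}(f(X_{n+1})\vert\sit{})=\sum_{x\in\statespace{}}f(x)\,p(x\vert\sit{})$ for all $f\in\setofgengambles{}(\statespace{})$ and $\sit{}\in\situations{}$, and that as $p$ ranges over all trees compatible with $\tree{}$ its local mass function $p(\cdot\vert\sit{})$ ranges over all of $\mathbb{P}_{\sit{}}$ (the other components of $p$ can be chosen freely, since every $\mathbb{P}_s$ is non-empty); therefore $\upprev{\mathrm{meas}}(f(X_{n+1})\vert\sit{})=\sup_{q\in\mathbb{P}_{\sit{}}}\sum_{x\in\statespace{}}f(x)q(x)=\lupprev{\sit{}}(f)$ by Equation~\eqref{Eq: second link upprev and mass functions}. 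Axiom \ref{P conditional} is similar: since $f\indica{s}$ is again a finitary gamble and $\upprev{\mathrm{A},p}(f\vert s)=\upprev{\mathrm{A},p}(f\indica{s}\vert s)$ for every $p$, taking suprema yields $\upprev{\mathrm{meas}}(f\vert s)=\upprev{\mathrm{meas}}(f\indica{s}\vert s)$.

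The step that requires genuine care is \ref{P iterated}, because the inequality \ref{P iterated} is not automatically inherited by an upper envelope. Here I would reason pointwise: fix $\omega\in\Omega$, write $s\coloneqq\omega^n$, and set $g\coloneqq\upprev{\mathrm{meas}}(f\vert X_{1:n+1})$. For any $p\sim\tree{}$ one has $\upprev{\mathrm{A},p}(f\vert X_{1:n+1})\leq g$ pointwise (by definition of the supremum), so, using in turn that $\upprev{\mathrm{A},p}$ satisfies the \emph{equality} \ref{P iterated general}, the monotonicity \ref{P monotonicity} of $\upprev{\mathrm{A},p}$, and $\upprev{\mathrm{A},p}(g\vert s)\leq\upprev{\mathrm{meas}}(g\vert s)$, I would obtain
\begin{equation*}
\upprev{\mathrm{A},p}(f\vert s)
=\upprev{\mathrm{A},p}\bigl(\upprev{\mathrm{A},p}(f\vert X_{1:n+1})\big\vert s\bigr)
\leq\upprev{\mathrm{A},p}(g\vert s)
\leq\upprev{\mathrm{meas}}(g\vert s).
\end{equation*}
Taking the supremum over $p\sim\tree{}$ gives $\upprev{\mathrm{meas}}(f\vert s)\leq\upprev{\mathrm{meas}}(g\vert s)=\upprev{\mathrm{meas}}\bigl(\upprev{\mathrm{meas}}(f\vert X_{1:n+1})\big\vert s\bigr)$, and since $\omega$ (and hence $s=\omega^n$) was arbitrary, composing with $X_{1:n}$ yields \ref{P iterated}. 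The main obstacle is precisely this envelope argument for \ref{P iterated}; once one has the law of iterated expectations as an equality for each $\upprev{\mathrm{A},p}$ (guaranteed by Proposition~\ref{Prop: vovk satisfies P1-P7} and Theorem~\ref{theorem: Vovk is the largest}), the remaining manipulations are routine.
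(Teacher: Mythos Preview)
Your argument is essentially the same as the paper's: both verify \ref{P compatibility}--\ref{P continuity} by using that every $\upprev{\mathrm{meas},p}$ (equivalently $\upprev{\mathrm{A},p}$, by Theorem~\ref{theorem: all models coincide for precise probability trees}) already satisfies these properties, and then push them through the pointwise supremum over $p\sim\tree{}$; the treatment of \ref{P iterated} via monotonicity and the inner bound $\upprev{\mathrm{A},p}(f\vert X_{1:n+1})\leq\upprev{\mathrm{meas}}(f\vert X_{1:n+1})$ is identical in spirit. One small slip: you write that $\upprev{\mathrm{A},p}$ ``belongs to $\setofupprev{}_{1-5}(\tree{})$'', but it does not---it satisfies \ref{P compatibility} with respect to the \emph{precise} tree $p$, not with respect to $\tree{}$. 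This is harmless, since your proof only uses \ref{P conditional}--\ref{P continuity} and \ref{P iterated general} for $\upprev{\mathrm{A},p}$ (none of which mention the local models), and your separate argument for \ref{P compatibility} correctly takes the supremum over $\mathbb{P}_{\sit{}}$.
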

\begin{proof}
To show that \ref{P compatibility} holds, consider any \(f \in \setofgengambles{}(\statespace{})\) and any \(x_{1:n} \in \situations{}\).
Since \(f(X_{n+1})\) is bounded and clearly \(\mathscr{F}\)-measurable, we have, for any \(p \sim \tree{}\), that \(\mathrm{E}_{\mathrm{meas},p} (f(X_{n+1}) \vert x_{1:n})\) exists and therefore that \(\upprev{\mathrm{meas},p} (f(X_{n+1}) \vert x_{1:n}) = \mathrm{E}_{\mathrm{meas},p} (f(X_{n+1}) \vert x_{1:n})\), due to Proposition~\ref{prop: alternative extension is an extension}.
Moreover, using \(\mathrm{P}_{\vert x_{1:n}}(x_{n+1})\) to denote \(\mathrm{P}_{\vert x_{1:n}}( \cup_{z_{1:n} \in \statespace{}^n} \Gamma(z_{1:n}x_{n+1}))\) for any $x_{n+1}\in\statespace{}$, 
it is clear that \(\smash{\mathrm{E}_{\mathrm{meas},p} (f(X_{n+1}) \vert x_{1:n}) = \sum_{x_{n+1} \in \statespace{}} f(x_{n+1}) \mathrm{P}_{\vert x_{1:n}}(x_{n+1})}\).
It also follows immediately from the construction \eqref{Eq: precise probability on algebra} of \(\mathrm{P}_{\vert x_{1:n}} = \mathrm{P}(\cdot \vert x_{1:n})\) that \(\smash{\mathrm{P}_{\vert x_{1:n}}(x_{n+1}) = \mathrm{P}_{\vert x_{1:n}}(x_{1:n+1}) = p(x_{n+1} \vert x_{1:n})}\) for all $x_{n+1}\in\statespace{}$ and \(p \sim \tree{}\).
Hence,
\begin{align*}
\mupprev{}(f(X_{n+1}) \vert x_{1:n}) 
&= \sup \Bigl\{ \upprev{\mathrm{meas},p}(f(X_{n+1}) \vert x_{1:n}) \colon p \sim \tree{} \Bigr\} \\
&= \sup \Bigl\{ \mathrm{E}_{\mathrm{meas},p}(f(X_{n+1}) \vert x_{1:n}) \colon p \sim \tree{} \Bigr\} \\
&= \sup \Bigg\{ \sum_{x_{n+1} \in \statespace{}} f(x_{n+1}) p(x_{n+1} \vert x_{1:n}) \colon p \sim \tree{} \Bigg\} \\
&= \sup \Bigg\{ \sum_{x_{n+1} \in \statespace{}} f(x_{n+1}) p(x_{n+1} \vert x_{1:n}) \colon p(\cdot \vert x_{1:n}) \in \mathbb{P}_{x_{1:n}} \Bigg\}
= \lupprev{x_{1:n}}(f),
\end{align*}
where the last step follows from the relation between \(\smash{\mathbb{P}_{x_{1:n}}}\) and \(\smash{\lupprev{x_{1:n}}}\) which is given by Equation~\eqref{Eq: second link upprev and mass functions}.

The remaining properties \ref{P conditional}--\ref{P continuity} can be easily derived from the definition of $\upprev{\mathrm{meas}}$ and the fact that, due to Corollary~\ref{corollary: mupprevpreciseone is most conservative under P1-P5}, these properties are satisfied by each compatible $\smash{\upprev{\mathrm{meas},p}}$.
For Properties~\ref{P conditional} and~\ref{P monotonicity} this is immediate.
To see that \ref{P iterated} holds, consider any $f\in\smash{\setoffingambles{}}$ and any $n\in\natz$.
Note that, for any compatible tree $p\sim\tree$ and any $x_{1:n}\in\statespace{}^n$, we have that $\upprev{\mathrm{meas},p}(f\vert x_{1:n}) \leq \upprev{\mathrm{meas},p}(\upprev{\mathrm{meas},p}(f\vert X_{1:n+1}) \vert x_{1:n})$ because $\upprev{\mathrm{meas},p}$ satisfies \ref{P iterated}.
Hence, 
\begin{align*}
\upprev{\mathrm{meas}}(f\vert x_{1:n}) 
= \sup \Bigl\{ \upprev{\mathrm{meas},p}(f \vert x_{1:n}) \colon p \sim \tree{} \Bigr\}
&\leq \sup \Bigl\{\upprev{\mathrm{meas},p}\big(\upprev{\mathrm{meas},p}(f\vert X_{1:n+1}) \vert x_{1:n}\big) \colon p \sim \tree{} \Bigr\}\\
&\leq  \sup \Bigl\{\upprev{\mathrm{meas},p}\big(\upprev{\mathrm{meas}}(f\vert X_{1:n+1}) \vert x_{1:n}\big) \colon p \sim \tree{} \Bigr\} \\
&=  \upprev{\mathrm{meas}}\big(\upprev{\mathrm{meas}}(f\vert X_{1:n+1}) \vert x_{1:n}\big),
\end{align*}
where the second inequality follows from the monotonicity [\ref{P monotonicity}] of $\smash{\upprev{\mathrm{meas},p}}$ and the fact that $\smash{\upprev{\mathrm{meas},p}(f\vert z_{1:n+1})}\leq\upprev{\mathrm{meas}}(f\vert z_{1:n+1})$ for all $p\sim\tree$ and all $z_{1:n+1}\in\statespace{}^{n+1}$.
The inequality above holds for any $x_{1:n}\in\statespace{}^n$, so we indeed find that $\smash{\upprev{\mathrm{meas}}(f\vert X_{1:n}) \leq \upprev{\mathrm{meas}}\big(\upprev{\mathrm{meas}}(f\vert X_{1:n+1}) \vert X_{1:n}\big)}$.
To prove \ref{P continuity}, we consider any \(s \in \situations{}\) and any sequence \(\smash{\{f_n\}_{n \in \natz}}\) of finitary gambles that is uniformly bounded below and that converges pointwise to some variable $f\in\setofextvariables{}$.
For any compatible tree $p\sim\tree{}$, $\upprev{\mathrm{meas},p}$ satisfies \ref{P continuity} and therefore, $\upprev{\mathrm{meas},p}(f \vert s) \leq \limsup_{n \to +\infty} \upprev{\mathrm{meas},p}(f_n \vert s)$.
Then we also have that
\begin{align*}
\upprev{\mathrm{meas}}(f \vert s)
= \sup \Bigl\{\upprev{\mathrm{meas},p}(f \vert s) \colon p\sim\tree{}\Bigr\} 
&\leq \sup \Bigl\{\limsup_{n \to +\infty} \upprev{\mathrm{meas},p}(f_n \vert s) \colon p\sim\tree{}\Bigr\} \\
&\leq \sup \Bigl\{\limsup_{n \to +\infty} \, \Bigl[ \sup \Bigl\{\upprev{\mathrm{meas},p'}(f_n \vert s) \colon p'\sim\tree{}\Bigr\}\Bigr] \colon p\sim\tree{} \Bigr\} \\
&= \limsup_{n \to +\infty} \, \Bigl[ \sup \Bigl\{\upprev{\mathrm{meas},p'}(f_n \vert s) \colon p'\sim\tree{}\Bigr\}\Bigr] 
= \limsup_{n \to +\infty} \upprev{\mathrm{meas}}(f_n \vert s).
\tag*{\qedhere}
\end{align*}
\end{proof}
As mentioned before, the result above implies, in combination with Theorem~\ref{theorem: Vovk is the largest}, that $\upprev{\mathrm{meas}}$ is dominated by our model $\upprev{\mathrm{A}}$.
\begin{corollary}\label{corollary: inequality between mupprev and upprev_A}
We have that \/ \(\mupprev ( f \vert s) \leq \upprev{\mathrm{A}}( f \vert s) = \upprevvovkk{}( f \vert s)\) for all \(f \in \setofextvariables{}\) and all \(s \in \situations{}\).
\end{corollary}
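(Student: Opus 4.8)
The plan is to derive this corollary as an immediate consequence of two results already established: Theorem~\ref{Theorem: measure satisfies P1-P5}, which places the global measure-theoretic upper expectation $\mupprev$ inside the class $\setofupprev{}_{1-5}(\tree{})$, and Theorem~\ref{theorem: Vovk is the largest}, which identifies $\upprevvovkk{}$ as the unique most conservative element of that very class and calls it $\upprev{\mathrm{A}}$.

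First I would invoke Theorem~\ref{Theorem: measure satisfies P1-P5} to note that $\mupprev \in \setofupprev{}_{1-5}(\tree{})$, i.e.\ that $\mupprev$ satisfies \ref{P compatibility}--\ref{P continuity}. Next I would recall, from Theorem~\ref{theorem: Vovk is the largest}, that there is a (unique) most conservative upper expectation $\upprev{\mathrm{A}}$ in $\setofupprev{}_{1-5}(\tree{})$ and that it equals the game-theoretic upper expectation $\upprevvovkk{}$. By the meaning of ``most conservative'' fixed in Section~\ref{Sect: Search of a global model}---namely, pointwise largest---this gives $\mupprev(f \vert s) \leq \upprev{\mathrm{A}}(f \vert s)$ for every $f \in \setofextvariables{}$ and every $s \in \situations{}$, and combining this with the identity $\upprev{\mathrm{A}}(f \vert s) = \upprevvovkk{}(f \vert s)$ from Theorem~\ref{theorem: Vovk is the largest} completes the argument.

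There is essentially no obstacle at the level of this corollary itself: all the genuine work has already been carried out, on the one hand in Theorem~\ref{Theorem: measure satisfies P1-P5} (verifying each of \ref{P compatibility}--\ref{P continuity} for $\mupprev$, where the delicate points are \ref{P compatibility}, which relies on the product-rule construction \eqref{Eq: precise probability on algebra} and the representation \eqref{Eq: second link upprev and mass functions}, and \ref{P continuity}, which passes the limsup inequality for each compatible $\smash{\upprev{\mathrm{meas},p}}$ through the supremum over $p \sim \tree{}$), and on the other hand in the chain Theorem~\ref{Theorem: smallest vovk general} $\to$ Proposition~\ref{Prop: vovk satisfies P1-P7} $\to$ Theorem~\ref{theorem: Vovk is the largest} establishing the maximality of $\upprevvovkk{}$. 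So the corollary follows in one line, and I would simply write: immediately from Theorem~\ref{Theorem: measure satisfies P1-P5} and Theorem~\ref{theorem: Vovk is the largest}.
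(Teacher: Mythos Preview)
Your proposal is correct and matches the paper's own proof essentially verbatim: the paper also writes ``Immediate consequence of Theorem~\ref{theorem: Vovk is the largest} and Theorem~\ref{Theorem: measure satisfies P1-P5}.'' Your surrounding commentary about where the real work lies is accurate but extraneous to the corollary itself.
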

\begin{proof}
Immediate consequence of Theorem~\ref{theorem: Vovk is the largest} and Theorem~\ref{Theorem: measure satisfies P1-P5}.
\end{proof} 

Corollary~\ref{corollary: inequality between mupprev and upprev_A} extends upon an earlier result by Lopatatzidis et al. \cite[Theorem~33]{8535240}, which says that $\mupprev (f\vert s) \leq \upprevvovkk{}(f \vert s)$ for all $s\in\situations{}$ and some specific variables $f$ in $\setoflimitsoffinmeasextb{}$. 
The definition of $\upprevvovkk{}$ in \cite{8535240} slightly differs from ours though.
The version in \cite{8535240} is always larger or equal than---so at least as conservative as---ours because they work with supermartingales that can only take real values; see \cite[section~8]{Tjoens2020FoundationsARXIV} for more details.
Hence, Corollary~\ref{corollary: inequality between mupprev and upprev_A} certainly implies \cite[Theorem~33]{8535240}. 
Another existing result that is strongly related to Corollary~\ref{corollary: inequality between mupprev and upprev_A} is due to Shafer and Vovk \cite[Proposition~9.8]{Vovk2019finance}, which seems to say that $\mupprev (f\vert \Box) \leq \upprevvovkk{}(f \vert \Box)$ for all bounded below variables \(f\in\setofextvariablesb{}\).
Since our result applies to all $f\in\setofextvariables{}$ and all $s\in\situations{}$, our result would in that sense be stronger than theirs. 
However, it is not immediately clear whether we can actually draw such conclusions.
Unlike in our case, Shafer and Vovk~\cite[Section~9.2]{Vovk2019finance} do not require that, for each possible situation, a local model should be specified beforehand.
To put it in their terms: Forecaster need not declare to Skeptic beforehand---that is, before the game starts---which local upper expectations he is going to choose.
Instead, Shafer and Vovk only start off with a set $\mathbb{Q}$ of local upper expectations from which, at each situation, Forecaster can choose.
Since this set $\mathbb{Q}$ is moreover required to be constant throughout the whole game, this crucially implies that we, as subjects outside the game, are unable to specify Forecaster's moves---and hence, the choice of the local upper expectation at each situation.
The global game-theoretic upper expectation will therefore not depend on Forecaster's moves, but rather only on the set $\mathbb{Q}$.
Moreover, both of their global models---their global game-theoretic upper expectation and their global measure-theoretic upper expectation---are defined on variables with a domain that differs from $\samplespace{}$.
Besides the fact that we find it far from trivial how this whole framework relates to ours here, they also impose some relatively strong toplogical conditions on the set $\mathbb{Q}$.
Hence, all together, in order to draw a fair comparison, we feel like a closer look is required.
We hope to clarify this in our future work.


Moving on, the inevitable question that comes to mind when looking at Corollary~\ref{corollary: inequality between mupprev and upprev_A} is whether the inequality can be turned into an equality. We are aware of two partial answers.
On the one hand, Shafer and Vovk seem to establish in \cite[Theorem~9.7]{Vovk2019finance} that $\mupprev{}(f \vert \Box) = \upprevvovkk{}(f \vert \Box)$ for all Suslin gambles $f\in\setofgambles{}$; this includes all $\mathscr{F}$-measurable gambles and is therefore a very strong result. 
However, here too, as before, their framework differs substantially from ours and the result requires additional topological assumptions. 
It is therefore again not immediately clear if, and how, it translates to our context; we leave this for future work.
On the other hand, we also have a result by Lopatatzidis et al. \cite[Theorem~29]{8535240} that shows that $\mupprev{}(f \vert s)=\upprevvovkk{}(f \vert s)$ for all $s\in\situations{}$ and all $n$-measurable gambles~$f$---and therefore all finitary gambles~$f$ in $\setoffingambles{}$.
Here too, however, the result is not immediately applicable because
---as explained directly after Corollary~\ref{corollary: inequality between mupprev and upprev_A}---the version of $\upprevvovkk{}$ that is used in \cite{8535240} differs from ours. 
We therefore give a self-contained proof instead.
Nevertheless, our ideas and techniques are essentially the same as the ones that were used to obtain \cite[Theorem~29]{8535240}.

We start by showing that $\mupprev{}$ satisfies \ref{P iterated general}; the desired equivalence result will then follow as an immediate consequence.

\begin{lemma}\label{lemma: equality between all models for finitary in imprecise case}
The upper expectation $\mupprev{}$ satisfies \ref{P iterated general}.
\end{lemma}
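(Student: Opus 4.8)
The plan is to establish only the nontrivial inequality $\mupprev{}(f\vert X_{1:n}) \ge \mupprev{}(\mupprev{}(f\vert X_{1:n+1})\vert X_{1:n})$, because the reverse inequality is exactly \ref{P iterated}, which $\mupprev{}$ already satisfies by Theorem~\ref{Theorem: measure satisfies P1-P5}. So I would fix $f\in\setoffingambles{}$, $n\in\natz$ and a situation $x_{1:n}\in\statespace{}^n$, and it will suffice to show $\mupprev{}(f\vert x_{1:n}) \ge \mupprev{}(\mupprev{}(f\vert X_{1:n+1})\vert x_{1:n})$.

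First I would bring the right-hand side into a usable form. Since $f$ is a gamble, each $\upprev{\mathrm{meas},p}(f\vert x_{1:n}y) = \mathrm{E}_{\mathrm{meas},p}(f\vert x_{1:n}y)$ is a Lebesgue integral of a bounded function and hence lies between $\inf f$ and $\sup f$; taking the supremum over $p\sim\tree{}$, the map $g\colon y\mapsto\mupprev{}(f\vert x_{1:n}y)$ on the finite set $\statespace{}$ is a gamble, and it equals $(\mupprev{}(f\vert X_{1:n+1}))(x_{1:n}\cdot)$. Because $\mupprev{}\in\setofupprev{}_{1-2}(\tree{})$ by Theorem~\ref{Theorem: measure satisfies P1-P5}, Lemma~\ref{Lemma: compatibility} gives $\mupprev{}(\mupprev{}(f\vert X_{1:n+1})\vert x_{1:n}) = \lupprev{x_{1:n}}(g)$, and Equation~\eqref{Eq: second link upprev and mass functions} produces a $q^{\ast}\in\mathbb{P}_{x_{1:n}}$ with $\lupprev{x_{1:n}}(g) = \sum_{y\in\statespace{}}\mupprev{}(f\vert x_{1:n}y)\,q^{\ast}(y)$.

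The heart of the argument is a gluing construction. Fix $\epsilon>0$. For each $y\in\statespace{}$, use the definition of $\mupprev{}$ as an upper envelope to pick a compatible tree $p_y\sim\tree{}$ with $\upprev{\mathrm{meas},p_y}(f\vert x_{1:n}y) > \mupprev{}(f\vert x_{1:n}y)-\epsilon$, which is possible since $\mupprev{}(f\vert x_{1:n}y)$ is finite. Then I would build a single compatible tree $p^{\ast}\sim\tree{}$ by setting $p^{\ast}(\cdot\vert x_{1:n})\coloneqq q^{\ast}$, setting $p^{\ast}(\cdot\vert s)\coloneqq p_y(\cdot\vert s)$ for every situation $s$ that extends $x_{1:n}y$ (for some $y\in\statespace{}$), and choosing $p^{\ast}(\cdot\vert s)$ to be an arbitrary element of $\mathbb{P}_s$ (nonempty because $\lupprev{s}$ is coherent) for all remaining $s$. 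Two observations then close the proof. Using the product-rule construction~\eqref{Eq: precise probability on algebra} and the fact that $f$, being finitary, is a finite linear combination of cylinder indicators, $\upprev{\mathrm{meas},p}(f\vert x_{1:n}y) = \sum_{z_{1:m}}f(z_{1:m})\,\mathrm{P}(z_{1:m}\vert x_{1:n}y)$ depends only on the restriction of $p$ to the situations extending $x_{1:n}y$; hence $\upprev{\mathrm{meas},p^{\ast}}(f\vert x_{1:n}y) = \upprev{\mathrm{meas},p_y}(f\vert x_{1:n}y)$ for every $y$. Moreover, since $\upprev{\mathrm{meas},p^{\ast}} = \upprev{\mathrm{A},p^{\ast}}$ by Theorem~\ref{theorem: all models coincide for precise probability trees} and therefore satisfies \ref{P iterated general} by Proposition~\ref{Prop: coherence for upprev_A}(\ref{vovk coherence iterated}), and $\upprev{\mathrm{meas},p^{\ast}}(f\vert X_{1:n+1})$ is an $(n+1)$-measurable gamble, Lemma~\ref{Lemma: compatibility} applied to the precise tree $p^{\ast}$ gives $\upprev{\mathrm{meas},p^{\ast}}(f\vert x_{1:n}) = \sum_{y}\upprev{\mathrm{meas},p^{\ast}}(f\vert x_{1:n}y)\,p^{\ast}(y\vert x_{1:n})$.

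Combining everything, $\mupprev{}(f\vert x_{1:n}) \ge \upprev{\mathrm{meas},p^{\ast}}(f\vert x_{1:n}) = \sum_{y}\upprev{\mathrm{meas},p_y}(f\vert x_{1:n}y)\,q^{\ast}(y) > \sum_{y}\mupprev{}(f\vert x_{1:n}y)\,q^{\ast}(y) - \epsilon = \mupprev{}(\mupprev{}(f\vert X_{1:n+1})\vert x_{1:n}) - \epsilon$, where the last equality uses the first step and $\sum_y q^{\ast}(y)=1$. Letting $\epsilon\downarrow0$ and then letting $x_{1:n}$ range over $\statespace{}^n$ yields the wanted inequality, and together with \ref{P iterated} this gives \ref{P iterated general}. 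The step I expect to be the main obstacle is the gluing: one must verify that the measure-theoretic construction~\eqref{Eq: precise probability on algebra} is \emph{local}, in the sense that $\upprev{\mathrm{meas},p}(f\vert x_{1:n}y)$ is insensitive to the values of $p$ outside the subtree rooted at $x_{1:n}y$, so that the separately near-optimal sub-trees $p_y$ can be patched onto one compatible $p^{\ast}$ that still uses the optimal local mass function $q^{\ast}$ at $x_{1:n}$; finitariness of $f$ is precisely what keeps this bookkeeping at the level of finite sums and avoids any measure-theoretic technicalities.
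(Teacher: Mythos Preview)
Your proof is correct but takes a genuinely different route from the paper's. Both approaches exploit the same key locality fact---that for a finitary $f$, the value $\upprev{\mathrm{meas},p}(f\vert s)$ depends only on the local mass functions of $p$ in the subtree rooted at $s$, so that (near-)optimal sub-trees can be glued into a single compatible tree---but they organise the argument differently. The paper proceeds by \emph{decreasing induction} on the conditioning level $m$: it first checks that for $m$ at or above the measurability level of $f$ both sides of \ref{P iterated general} equal $f$ itself, and then, in the step from $m=\ell$ to $m=\ell-1$, uses the attainment of the maximum in Equation~\eqref{Eq: second link upprev and mass functions} to build a compatible tree $p^\ast$ for which $\mupprev{}(\mupprev{}(f\vert X_{1:\ell})\vert x_{1:\ell-1})$ \emph{exactly} equals $\upprev{\mathrm{meas},p^\ast}(f\vert x_{1:\ell-1})$, carrying out the probability bookkeeping with Equation~\eqref{Eq: precise probability on algebra} by hand. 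Your approach is more direct: you glue once for a fixed $x_{1:n}$ and, instead of unrolling the recursion explicitly, you invoke the already-established iterated law for the precise model $\upprev{\mathrm{meas},p^\ast}=\upprev{\mathrm{A},p^\ast}$ via Theorem~\ref{theorem: all models coincide for precise probability trees} and Proposition~\ref{Prop: coherence for upprev_A}(\ref{vovk coherence iterated}). This buys brevity and conceptual clarity at the small cost of an $\epsilon$-argument; the paper's induction avoids the $\epsilon$ (because the maximum in Equation~\eqref{Eq: second link upprev and mass functions} is attained) but is computationally heavier. A minor remark: since that maximum is attained and $\statespace$ is finite, you could in fact drop the $\epsilon$ in your argument too, provided you first observe (as the paper does inductively) that $\mupprev{}(f\vert x_{1:n}y)$ is itself achieved by some compatible tree.
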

\begin{proof}
Consider any $f\in\setoffingambles{}$.
Let $n\in\natz$ be such that $f$ is $n$-measurable---there is at least one such an $n$ because $f$ is finitary.
Then, to establish the fact that $\mupprev{}$ satisfies \ref{P iterated general}, it suffices to prove that, for all $m\in\natz$,
\begin{align}\label{Eq: prop: equality between all models for finitary in imprecise case}
\text{there is some } p\sim\tree{} \text{ such that }
\mupprev{}(\mupprev{}(f \vert X_{1:m+1}) \vert X_{1:m}) =\upprev{\mathrm{meas},p}(f \vert X_{1:m}).
\end{align}
Indeed, by definition of $\mupprev{}$, $\smash{\upprev{\mathrm{meas},p}(f \vert X_{1:m}) \leq \mupprev{}(f \vert X_{1:m})}$ for all $p\sim\tree$ and all $m\in\natz$, and therefore, expression~\eqref{Eq: prop: equality between all models for finitary in imprecise case} implies that $\mupprev{}(\mupprev{}(f \vert X_{1:m+1})\vert X_{1:m}) \leq \mupprev{}(f \vert X_{1:m})$ for all $m\in\natz$.
The converse inequality then follows from Theorem~\ref{Theorem: measure satisfies P1-P5}, which says that $\mupprev{}$ satisfies \ref{P iterated}.

Let us first show that expression~\eqref{Eq: prop: equality between all models for finitary in imprecise case} holds for any $m \geq n$.
Consider any $\ell\geq n$ and any $x_{1:\ell}\in\statespace{}^{\ell}$.
Since $f$ is $n$-measurable and $\ell\geq n$, $f$ takes a constant value $f(x_{1:n})$ on $\Gamma(x_{1:\ell})$, and therefore, we can write that $f \indica{x_{1:\ell}} = f(x_{1:n})\indica{x_{1:\ell}}$.
Hence, for any $\smash{p\sim\tree}$, since $\smash{\upprev{\mathrm{meas},p}}$ satisfies \ref{P conditional} due to Theorem~\ref{theorem: all models coincide for precise probability trees}, we have that 
\begin{align*}
\upprev{\mathrm{meas},p}(f \vert x_{1:\ell})
\overset{\text{\ref{P conditional}}}{=}\upprev{\mathrm{meas},p}(f \indica{x_{1:\ell}} \vert x_{1:\ell}) 
=\upprev{\mathrm{meas},p}(f(x_{1:n}) \indica{x_{1:\ell}} \vert x_{1:\ell})
\overset{\text{\ref{P conditional}}}{=}\upprev{\mathrm{meas},p}(f(x_{1:n}) \vert x_{1:\ell})
= \prev_{\mathrm{meas},p}(f(x_{1:n}) \vert x_{1:\ell}).
\end{align*}
where the last step follows from Proposition~\ref{prop: alternative extension is an extension} and the fact that $\prev_{\mathrm{meas},p}(f(x_{1:n}) \vert x_{1:\ell})$ exists because $f(x_{1:n})$ is a constant.
Furthermore, since $f(x_{1:n})$ is real-valued [because $f$ is a gamble], \ref{prop measure: linearity} implies that $\prev_{\mathrm{meas},p}(f(x_{1:n}) \vert x_{1:\ell})=f(x_{1:n})$.
Hence, we have that 
\begin{align}\label{Eq: prop: equality between all models for finitary in imprecise case 4}
\upprev{\mathrm{meas},p}(f \vert x_{1:\ell}) = f(x_{1:n})  \text{ for all } p\sim\tree, \text{ all } \ell\geq n \text{ and all } x_{1:\ell}\in\statespace{}^{\ell}.
\end{align}
So, since $m\geq n$, the right hand side of Equation~\eqref{Eq: prop: equality between all models for finitary in imprecise case} is equal to $f(X_{1:n})=f$, whatever the tree $p\sim\tree$.
Moreover, note that Equation~\eqref{Eq: prop: equality between all models for finitary in imprecise case 4} also implies that $\mupprev{}(f \vert x_{1:\ell}) = f(x_{1:n})$ for all $\ell\geq n$ and all $x_{1:\ell}\in\statespace{}^{\ell}$.
Then, since $m\geq n$, and therefore certainly $m+1\geq n$, the left hand side of Equation~\eqref{Eq: prop: equality between all models for finitary in imprecise case} is equal to
\begin{align*}
\mupprev{}(\mupprev{}(f \vert X_{1:m+1})\vert X_{1:m})
= \mupprev{}(f(X_{1:n}) \vert X_{1:m})
= \mupprev{}(f \vert X_{1:m}) = f,
\end{align*}
where the last step follows once more from the fact that $m\geq n$, and that $\mupprev{}(f \vert x_{1:\ell}) = f(x_{1:n})$ for all $\ell\geq n$ and all $x_{1:\ell}\in\statespace{}^{\ell}$.
Hence, irrespectively of the tree $p\sim\tree$, both sides of Equation~\eqref{Eq: prop: equality between all models for finitary in imprecise case} are equal to $f$---and therefore equal to each other---for the case that $m\geq n$.

For the case that $m < n$, we will establish expression~\eqref{Eq: prop: equality between all models for finitary in imprecise case} using a decreasing induction argument in $m$.  
The case where $m=n$ will serve as an induction base; that expression~\eqref{Eq: prop: equality between all models for finitary in imprecise case} holds for this case was just proved above.
Now suppose that expression~\eqref{Eq: prop: equality between all models for finitary in imprecise case} is satisfied with $m=\ell$ where $1\leq\ell\leq n$, meaning that there is some $p\sim\tree$ such that $\smash{\mupprev{}(\mupprev{}(f \vert X_{1:\ell+1}) \vert X_{1:\ell})=\upprev{\mathrm{meas},p}(f \vert X_{1:\ell})}$.
First note that this implies that $\smash{\mupprev{}(f \vert X_{1:\ell})}=\smash{\upprev{\mathrm{meas},p}(f \vert X_{1:\ell})}$ because, on the one hand, $\smash{\mupprev{}(f \vert X_{1:\ell}) \leq \mupprev{}(\mupprev{}(f \vert X_{1:\ell+1}) \vert X_{1:\ell})}$ due to Theorem~\ref{Theorem: measure satisfies P1-P5} which says that $\mupprev{}$ satisfies \ref{P iterated}, and on the other hand, $\upprev{\mathrm{meas},p}(f \vert X_{1:\ell}) \leq \mupprev{}(f \vert X_{1:\ell})$ due to the definition of $\mupprev{}$.
Furthermore, because $f$ is a finitary gamble, and therefore an $\mathscr{F}$-measurable gamble, $\prev_{\mathrm{meas},p}(f \vert X_{1:\ell})$ exists and therefore, by Proposition~\ref{prop: alternative extension is an extension}, 
\begin{align*}
\mupprev{}(f \vert X_{1:\ell}) = \upprev{\mathrm{meas},p}(f \vert X_{1:\ell})=\prev_{\mathrm{meas},p}(f \vert X_{1:\ell}).
\end{align*}
Moreover, using \ref{prop measure: linearity} and \ref{prop measure: monotonicity}, and the fact that $\inf f$ and $\sup f$ are both real (because $f$ is a gamble), it can easily be inferred that $\inf f \leq \prev_{\mathrm{meas},p}(f \vert X_{1:\ell}) \leq \sup f$.
So $\mupprev{}(f \vert X_{1:\ell}) = \prev_{\mathrm{meas},p}(f \vert X_{1:\ell})$ is a gamble, and more specifically an $\ell$-measurable gamble.
Then, fix any $\smash{x_{1:\ell-1}\in\statespace{}^{\ell-1}}$ and note that, since $\smash{\mupprev{}}$ satisfies \ref{P compatibility} and \ref{P conditional} by Theorem~\ref{Theorem: measure satisfies P1-P5}, we can apply Lemma~\ref{Lemma: compatibility} to obtain that 
\begin{align*}
\mupprev{}(\mupprev{}(f \vert X_{1:\ell}) \vert x_{1:\ell-1})
= \mupprev{}(\prev_{\mathrm{meas},p}(f \vert X_{1:\ell}) \vert x_{1:\ell-1})
= \lupprev{x_{1:\ell-1}}(\prev_{\mathrm{meas},p}(f \vert x_{1:\ell-1} \, \cdot)).
\end{align*}
Then, due to the fact that $\mathbb{P}_{x_{1:\ell-1}}$ is related with $\lupprev{x_{1:\ell-1}}$ through Equation~\eqref{Eq: second link upprev and mass functions}, there is some $p'(\, \cdot \, \vert x_{1:\ell-1})\in\mathbb{P}_{x_{1:\ell-1}}$ such that
\begin{align}\label{Eq: prop: equality between all models for finitary in imprecise case 2}
\mupprev{}(\mupprev{}(f \vert X_{1:\ell}) \vert x_{1:\ell-1})
= \lupprev{x_{1:\ell-1}}(\prev_{\mathrm{meas},p}(f \vert x_{1:\ell-1} \, \cdot))
= \sum_{x_\ell \in\statespace{}} p'(x_\ell \vert x_{1:\ell-1})\, \prev_{\mathrm{meas},p}(f \vert x_{1:\ell}).
\end{align} 
Moreover, since $f$ is $n$-measurable, we can write that $f = \sum_{z_{1:n}\in\statespace{}^n} f(z_{1:n}) \indica{z_{1:n}}$, and therefore, the Lebesgue integral $\prev_{\mathrm{meas},p}(f \vert x_{1:\ell})$ for any $x_\ell\in\statespace{}$, reduces---by definition; see \cite[Section 2.6.1]{shiryaev2016probabilityPartI}---to the finite sum 
\begin{align*}
\prev_{\mathrm{meas},p}(f \vert x_{1:\ell}) 
= \sum_{z_{1:n}\in{\statespace{}}^n} f(z_{1:n}) \mathrm{P}_{\vert x_{1:\ell}}(z_{1:n})
&= \sum_{z_{1:\ell}\in{\statespace{}}^\ell}\sum_{x_{\ell+1:n}\in{\statespace{}}^{n-\ell}} f(z_{1:\ell} x_{\ell+1:n}) \mathrm{P}_{\vert x_{1:\ell}}(z_{1:\ell} x_{\ell+1:n}),
\end{align*}
where the last step simply is a convenient change of notation.
Due to Equation~\eqref{Eq: precise probability on algebra}, we have, for all $z_{1:\ell}\in{\statespace{}}^\ell$ and all $x_{\ell+1:n}\in\statespace{}^{n-\ell}$, that $\mathrm{P}_{\vert x_{1:\ell}}(z_{1:\ell} x_{\ell+1:n}) = 0$ if $z_{1:\ell} \not= x_{1:\ell}$ and $\mathrm{P}_{\vert x_{1:\ell}}(z_{1:\ell} x_{\ell+1:n}) = \smash{\prod_{i=\ell}^{n-1}} p( x_{i+1} \vert x_{1:i})$ if $\smash{z_{1:\ell} = x_{1:\ell}}$.
Substituting these expressions in the equality above, gives us
\begin{align*}
\prev_{\mathrm{meas},p}(f \vert x_{1:\ell}) 
&= \sum_{x_{\ell+1:n}\in{\statespace{}}^{n-\ell}} f(x_{1:n}) \mathrm{P}_{\vert x_{1:\ell}}(x_{1:n}) 
= \sum_{x_{\ell+1:n}\in{\statespace{}}^{n-\ell}}\hspace*{-5pt} f(x_{1:n}) \prod_{i=\ell}^{n-1} p( x_{i+1} \vert x_{1:i}).
\end{align*}
Then, plugging this back into Equation~\eqref{Eq: prop: equality between all models for finitary in imprecise case 2}, we obtain that
\begin{align}\label{Eq: prop: equality between all models for finitary in imprecise case 3}
\mupprev{}(\mupprev{}(f \vert X_{1:\ell}) \vert x_{1:\ell-1})
&= \sum_{x_\ell \in\statespace{}} p'(x_\ell \vert x_{1:\ell-1}) \hspace*{-7pt} \sum_{x_{\ell+1:n}\in{\statespace{}}^{n-\ell}} \hspace*{-5pt}  f(x_{1:n}) \prod_{i=\ell}^{n-1} p( x_{i+1} \vert x_{1:i}) \nonumber \\
&= \sum_{x_{\ell:n}\in{\statespace{}}^{n-\ell+1}} \hspace*{-6pt} f(x_{1:n}) p'(x_\ell \vert x_{1:\ell-1}) \prod_{i=\ell}^{n-1} p( x_{i+1} \vert x_{1:i}).
\end{align}
Now let $\smash{p^\ast\sim\tree}$ be any compatible precise probability tree such that $p^\ast(\, \cdot \, \vert x_{1:\ell-1}) = p'(\, \cdot \, \vert x_{1:\ell-1})$ and $p^\ast(\, \cdot \, \vert x_{1:i}) = p( \, \cdot \,  \vert x_{1:i})$ for all $\ell \leq i \leq n-1$ and all $\smash{x_{\ell:i} \in \statespace{}^{i-\ell+1}}$; it is always possible to choose such a compatible tree $p^\ast$ because $p\sim\tree$ and $\smash{p'(\, \cdot \, \vert x_{1:\ell-1}) \in \mathbb{P}_{x_{1:\ell-1}}}$.
Then note that, for any $x_{\ell:n}\in\statespace{}^{n-\ell+1}$, the corresponding probability $\smash{\mathrm{P}^\ast_{\vert x_{1:\ell-1}}(x_{1:n})}$, related with $p^\ast$ through Equation~\eqref{Eq: precise probability on algebra}, is equal to $\smash{p'(x_\ell \vert x_{1:\ell-1}) \prod_{i=\ell}^{n-1} p( x_{i+1} \vert x_{1:i})}$.
Hence, recalling Equation~\eqref{Eq: prop: equality between all models for finitary in imprecise case 3}, we infer that
\begin{align*}
\mupprev{}(\mupprev{}(f \vert X_{1:\ell}) \vert x_{1:\ell-1})
= \sum_{x_{\ell:n}\in{\statespace{}}^{n-\ell+1}} \hspace*{-6pt} f(x_{1:n}) \mathrm{P}^\ast_{\vert x_{1:\ell-1}}(x_{1:n})
&= \sum_{z_{1:\ell-1}\in{\statespace{}}^{\ell-1}}
\sum_{x_{\ell:n}\in{\statespace{}}^{n-\ell+1}} f(z_{1:\ell-1} x_{\ell:n}) \mathrm{P}^\ast_{\vert x_{1:\ell-1}}(z_{1:\ell-1} x_{\ell:n}) \\
&= \prev_{\mathrm{meas},p^\ast}(f \vert x_{1:\ell-1}) 
= \upprev{\mathrm{meas},p^\ast}(f \vert x_{1:\ell-1}), 
\end{align*}
where the second step follows from the fact that $\mathrm{P}^\ast_{\vert x_{1:\ell-1}}(z_{1:\ell-1} x_{\ell:n}) = 0$ if $z_{1:\ell-1} \not= x_{1:\ell-1}$ due to Equation~\eqref{Eq: precise probability on algebra}, where the penultimate step follows from $f = \smash{\sum_{z_{1:n}\in\statespace{}^n} f(z_{1:n}) \indica{z_{1:n}}}$ and the definition of the Lebesgue integral \cite[Section~2.6.1]{shiryaev2016probabilityPartI}, and where the last step follows from Proposition~\ref{prop: alternative extension is an extension}.
Hence, since the equation above holds for any $x_{1:\ell-1}\in\statespace{}^{\ell-1}$,
we have established expression~\eqref{Eq: prop: equality between all models for finitary in imprecise case} for the case where $m = \ell -1$, concluding our proof of the induction step.
\end{proof}

That $\mupprev{}$, $\upprev{\mathrm{A}}$ and $\upprevvovkk{}$ are all equal on the domain $\setoffingambles{}$ now follows straightforwardly by combining Lemma~\ref{lemma: equality between all models for finitary in imprecise case} with Corollary~\ref{corollary: inequality between mupprev and upprev_A}, Theorem~\ref{Theorem: measure satisfies P1-P5} and Proposition~\ref{proposition: largest on n-measurables}.

\begin{proposition}\label{prop: equality between all models for finitary in imprecise case}
We have that \(\mupprev ( f \vert s) = \upprev{\mathrm{A}}( f \vert s) = \upprevvovkk{}( f \vert s)\) for all \(f \in \setoffingambles{}\) and all \(s \in \situations{}\).
\end{proposition}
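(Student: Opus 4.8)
The plan is to sandwich $\mupprev$ between $\upprev{\mathrm{A}}$ from above and from below on the domain $\setoffingambles{}$, exploiting the identity $\upprev{\mathrm{A}} = \upprevvovkk{}$ from Theorem~\ref{theorem: Vovk is the largest} so that it suffices to compare $\mupprev$ with $\upprev{\mathrm{A}}$ alone; once $\mupprev = \upprev{\mathrm{A}}$ on $\setoffingambles{}$ is established, the third equality is automatic.

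One direction is already available. Corollary~\ref{corollary: inequality between mupprev and upprev_A} gives $\mupprev(f\vert s) \leq \upprev{\mathrm{A}}(f\vert s) = \upprevvovkk{}(f\vert s)$ for \emph{every} $f\in\setofextvariables{}$ and every $s\in\situations{}$, hence in particular for all finitary gambles $f\in\setoffingambles{}$.

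For the converse inequality I would invoke Proposition~\ref{proposition: largest on n-measurables}, which states that any operator in $\setofupprev{}_{1-4}(\tree{})$ satisfying \ref{P iterated general} dominates every operator in $\setofupprev{}_{1-4}(\tree{})$ on the domain $\setoffingambles{}$. Its hypotheses are met by $\upprev{} \coloneqq \mupprev$: by Theorem~\ref{Theorem: measure satisfies P1-P5} we have $\mupprev \in \setofupprev{}_{1-5}(\tree{}) \subseteq \setofupprev{}_{1-4}(\tree{})$, and by Lemma~\ref{lemma: equality between all models for finitary in imprecise case} the operator $\mupprev$ satisfies \ref{P iterated general}; moreover $\upprev{\mathrm{A}} = \upprevvovkk{} \in \setofupprev{}_{1-5}(\tree{}) \subseteq \setofupprev{}_{1-4}(\tree{})$ by Proposition~\ref{Prop: vovk satisfies P1-P7} together with Theorem~\ref{theorem: Vovk is the largest}. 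Applying Proposition~\ref{proposition: largest on n-measurables} with $\upprev{}' \coloneqq \upprev{\mathrm{A}}$ then yields $\mupprev(f\vert s) \geq \upprev{\mathrm{A}}(f\vert s)$ for all $f\in\setoffingambles{}$ and all $s\in\situations{}$.

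Combining the two inequalities gives $\mupprev(f\vert s) = \upprev{\mathrm{A}}(f\vert s)$ on $\setoffingambles{}$, and since $\upprev{\mathrm{A}} = \upprevvovkk{}$ throughout, all three models agree there, which is the claim. I do not anticipate a genuine obstacle in this argument; the only point to watch is that Proposition~\ref{proposition: largest on n-measurables} requires the \emph{equality} form \ref{P iterated general} rather than merely the inequality \ref{P iterated}, but this is exactly what the preceding Lemma~\ref{lemma: equality between all models for finitary in imprecise case} supplies, and that lemma --- not this proposition --- is where the substantive work lies.
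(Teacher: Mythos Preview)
Your proposal is correct and mirrors the paper's own proof almost verbatim: one inequality from Corollary~\ref{corollary: inequality between mupprev and upprev_A}, the other by feeding $\mupprev$ into Proposition~\ref{proposition: largest on n-measurables} via Theorem~\ref{Theorem: measure satisfies P1-P5} and Lemma~\ref{lemma: equality between all models for finitary in imprecise case}. The only cosmetic difference is that the paper notes $\upprev{\mathrm{A}}\in\setofupprev{}_{1-5}(\tree{})$ holds \emph{by definition} of $\upprev{\mathrm{A}}$, whereas you route through Proposition~\ref{Prop: vovk satisfies P1-P7} and Theorem~\ref{theorem: Vovk is the largest}; both justifications are fine.
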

\begin{proof}
Due to Corollary~\ref{corollary: inequality between mupprev and upprev_A}, it suffices to prove that \(\mupprev ( f \vert s) \geq \upprev{\mathrm{A}}( f \vert s)\) for all \(f \in \setoffingambles{}\) and all \(s \in \situations{}\).
This can easily be done by combining Theorem~\ref{Theorem: measure satisfies P1-P5}, Lemma~\ref{lemma: equality between all models for finitary in imprecise case} and Proposition~\ref{proposition: largest on n-measurables}.
Indeed, Theorem~\ref{Theorem: measure satisfies P1-P5} says that $\mupprev{}$ is an element of $\setofupprev{}_{1-5}(\tree{})\subseteq\setofupprev{}_{1-4}(\tree{})$, and Lemma~\ref{lemma: equality between all models for finitary in imprecise case} says that $\mupprev{}$ satisfies \ref{P iterated general}, so Proposition~\ref{proposition: largest on n-measurables} implies that \(\mupprev ( f \vert s) \geq \upprev{}{'}( f \vert s)\) for all \(f \in \setoffingambles{}\), all \(s \in \situations{}\) and all $\upprev{}{'}\in\setofupprev{}_{1-4}(\tree{})$.
In particular, this implies that \(\mupprev ( f \vert s) \geq \upprev{\mathrm{A}}( f \vert s)\) for all \(f \in \setoffingambles{}\) and all \(s \in \situations{}\) because, by definition, $\upprev{\mathrm{A}}\in\setofupprev{}_{1-5}(\tree{})\subseteq\setofupprev{}_{1-4}(\tree{})$.
\end{proof}

Whether the equality with $\mupprev{}$ can be extended to a more general domain, remains an open question that we would like to pursue in our future research. 
We expect that there is much to learn from the results in \cite[Section~9.2]{Vovk2019finance}, provided that they can be translated to our setting here.

\section{Conclusion}\label{Sect: conclusion}

We have put forward a small set of simple axioms \ref{P compatibility}--\ref{P continuity} and argued that they are desirable for a global upper expectation in the context of discrete-time finite-state uncertain processes.
We have established the existence of a unique most conservative model under these axioms, and have in addition given sufficient conditions to uniquely characterise this most conservative model.
Using this characterisation, we have shown that our most conservative upper expectation coincides with a version of the game-theoretic upper expectation used by Shafer and Vovk, and therefore has particularly powerful mathematical properties, despite the simplicity of our defining axioms.

We also considered an alternative, more traditional measure-theoretic approach, where we defined a global belief model using an upper envelope of upper integrals corresponding to compatible precise probability trees.
We have shown that this model satisfies axioms \ref{P compatibility}--\ref{P continuity} and therefore, that our most conservative model gives guaranteed upper bounds on the value of this measure-theoretic upper expectation.
For precise probability trees, both models, and consequently also the game-theoretic model, coincide.
In the imprecise case, equality was only established for the domain of finitary gambles, and it remains to be seen whether this can be extended to a more general domain.

Given our current findings, we belief there are a number of reasons why our global model $\upprev{\mathrm{A}}$ candidates as an excellent choice when it comes to modelling discrete-time finite-state uncertain processes.
First and foremost, its definition is based on the axioms~\ref{P compatibility}--\ref{P continuity}; a set of properties which, in our opinion, are essential for a global belief model to have.
Our model $\upprev{\mathrm{A}}$ is taken to be the most conservative or, equivalently, the least informative model under these axioms because we do not want to include any further information.
In this way, we obtain a definition that is both convincing and interpretationally clear.
Moreover, if one desires her global belief model to also have other properties, additional to~\ref{P compatibility}--\ref{P continuity}, then our model $\upprev{\mathrm{A}}$ would still serve as a conservative upper bound for her model.
As a second argument, we recall from Section~\ref{Sect: additional properties} that our model $\upprev{\mathrm{A}}$ possesses rather strong continuity properties, as well as several fundamental probabilistic laws.
Thirdly and lastly, our model $\upprev{\mathrm{A}}$ coincides with the game-theoretic upper expectation $\upprevvovkk{}$ and dominates the measure-theoretic upper expectation $\upprev{\mathrm{meas}}$.
Hence, our model $\upprev{\mathrm{A}}$ plays an important role to anyone: for a person with a game-theoretic background, the equivalent models $\upprev{\mathrm{A}}$ and $\upprevvovkk{}$ can be used interchangeably and can therefore benefit from each others properties; for a person with a measure-theoretic background, our model $\upprev{\mathrm{A}}$ always gives a conservative upper bound for the model $\upprev{\mathrm{meas}}$.


One counterargument that could be given to the plea above, is whether axiom \ref{P continuity} is really desirable or even justified for a global belief model.
Our justification for it is based on the fact that we consider upper expectations of non-finitary variables to be abstract idealisations of upper expectations of finitary variables.
Still, this could be seen as a rather artificial and unnecessary assumption, similar to the countable additivity property in measure theory.
However, compared to other common continuity properties, e.g. monotone convergence, property~\ref{P continuity} is fairly weak since it only applies to sequences of finitary gambles and only imposes an inequality on the value of the global upper expectations.
Moreover, that both the game-theoretic and the measure-theoretic model satisfy property \ref{P continuity}, can only be seen as additional motivation to adopt it.

\section*{Acknowledgements}

The research of Jasper De Bock and Gert de Cooman was partially funded by the European Commission's H2020 programme, through the UTOPIAE Marie Curie Innovative Training Network, H2020-MSCA-ITN-2016, Grant Agreement number 722734, as well as by FWO (Research Foundation - Flanders),
through project number 3GO28919, entitled ``Efficient inference in large-scale queueing models using imprecise continuous-time Markov chains''.
We are also indebted two anonymous reviewers that provided helpful feedback and pointed us to some relevant literature.

\bibliographystyle{elsarticle-num-names}
\bibliography{IJA8590}

\appendix

\section{Basic Measure-Theoretic Concepts}\label{subsect: Basic measure-theoretic concepts}

A \emph{measurable space} \((\mathscr{Y},\mathscr{F})\) is a couple, where \(\mathscr{Y}\) is a non-empty set and \(\mathscr{F}\) is a \(\sigma\)-algebra on \(\mathscr{Y}\).
We say that \(A \subseteq \mathscr{Y}\) is \(\mathscr{F}\)-\emph{measurable} if \(A \in \mathscr{F}\).
Such a set \(A \in \mathscr{F}\) is also called an (\(\mathscr{F}\)-measurable) \emph{event}.
We say that an extended real-valued function \(f \colon \mathscr{Y} \to \extreals{}\) is \(\mathscr{F}\)-\emph{measurable} if the set \(f^{-1}(A) \coloneqq \{y \in \mathscr{Y} \colon f(y) \in A \}\) is \(\mathscr{F}\)-measurable for every \(A \in \mathscr{B}(\extreals{})\).
Here, \(\mathscr{B}(\extreals{})\) denotes the Borel algebra on \(\extreals{}\), being the \(\sigma\)-algebra generated by all open---or equivalently, closed---sets in \(\extreals{}\).
Recall that we consider the usual order topology on \(\extreals{}\) and therefore, the Borel algebra \(\mathscr{B}(\extreals{})\) can alternatively be generated from the sets \(\{x \in \extreals{} \colon x \leq c\}\) where \(c \in \reals{}\); see for instance \cite[Section~2.2.2]{shiryaev2016probabilityPartI}.
Then, as we have done in the main text, we can alternatively characterise the \(\mathscr{F}\)-measurable functions as those functions \(f \colon \mathscr{Y} \to \extreals{}\) such that \(\{y \in \mathscr{Y} \colon f(y) \leq c \}\) is \(\mathscr{F}\)-measurable for all \(c \in \reals{}\) \cite[Theorem~13.1.~(i)]{billingsley1995probability}.
Typically, an \(\mathscr{F}\)-measurable (extended) real-valued function \(f\) is also called a \emph{random variable}.

A \emph{probability space} \((\mathscr{Y},\mathscr{F},\mathrm{P})\) is a measurable space \((\mathscr{Y},\mathscr{F})\) equipped with a \(\sigma\)-additive probability measure \(\mathrm{P}\) on \(\mathscr{F}\).
We say that an event \(A \in \mathscr{F}\) is \(\mathrm{P}\)\emph{-null} \/ if \(\mathrm{P}(A) = 0\).
We will also say that a property about the elements in \(\mathscr{Y}\) holds \(\mathrm{P}\)\emph{-almost surely} (\(\mathrm{P}\)-a.s.) if there is a \(\mathrm{P}\)\emph{-null} event \(A \in \mathscr{F}\) such that the property holds for all \(y \in \mathscr{Y} \setminus A\).
Note that the intersection $A \cap B \in \mathscr{F}$ of two \(\mathrm{P}\)-almost sure events $A,B\in \mathscr{F}$, is itself also \(\mathrm{P}\)-almost sure.
Now, for any probability space \((\mathscr{Y},\mathscr{F},\mathrm{P})\) and any extended real-valued function \(f \colon \mathscr{Y} \to \extreals{}\) that is \(\mathscr{F}\)-measurable, the (measure-theoretic) expectation \(\mathrm{E}(f)\) of \(f\) will be defined using the Lebesgue integral \cite{shiryaev2016probabilityPartI,billingsley1995probability,royden2010real}:
\(\mathrm{E}(f)\coloneqq\int_{\mathscr{Y}} f \dif\mathrm{P}\).
As we already mentioned in the main text, this integral is definitely well-defined if $f$ is non-negative and is otherwise only defined if \(\smash{\min\{\int_{\mathscr{Y}} f^{+} \dif \mathrm{P},\int_{\mathscr{Y}} f^{-} \dif \mathrm{P}\}} < +\infty\), with $f^{+}=f^{\vee 0}$ and $f^{-}=-f^{\wedge 0}$, in which case we let $\smash{\int_{\mathscr{Y}} f \dif \mathrm{P}\coloneqq\int_{\mathscr{Y}} f^+ \dif \mathrm{P}-\int_{\mathscr{Y}} f^- \dif \mathrm{P}}$.
When we say that `$\smash{\mathrm{E}(f)}$ exists', we take this to mean that its defining integral $\smash{\int_{\mathscr{Y}} f \dif \mathrm{P}}$ is well-defined.
Furthermore, an extended real-valued function \(f \colon \mathscr{Y} \to \extreals{}\) is called \({\mathrm{P}\textit{-integrable}}\) if it is \(\mathscr{F}\)-measurable and \(\smash{\int_{\mathscr{Y}} \vert f\vert \dif \mathrm{P}=\int_{\mathscr{Y}} f^+ \dif \mathrm{P}+\int_{\mathscr{Y}} f^- \dif \mathrm{P}} < +\infty\).

The (measure-theoretic) conditional expectation \(\mathrm{E}(f \vert \mathscr{G})\) of a non-negative \(\mathscr{F}\)-measurable extended real-valued function \(f\) with respect to a \(\sigma\)-algebra \(\mathscr{G} \subseteq \mathscr{F}\), is any extended real-valued function on \(\mathscr{Y}\) that is \(\mathscr{G}\)-measurable and that satisfies \(\int_{A} f \dif \mathrm{P} = \int_{A} \mathrm{E}(f \vert \mathscr{G}) \dif \mathrm{P}\) or, equivalently, \(\int_{\mathscr{Y}} f \genindica{A} \dif \mathrm{P} = \int_{\mathscr{Y}} \mathrm{E}(f \vert \mathscr{G}) \genindica{A} \dif \mathrm{P}\) for all \(A \in \mathscr{G}\).
If \(f\) is \(\mathscr{F}\)-measurable and non-negative, the conditional expectation \(\mathrm{E}(f \vert \mathscr{G})\) exists and is unique up to a \(\mathrm{P}\)-null set \cite[Section 2.7]{shiryaev2016probabilityPartI}.
The value of \(\mathrm{E}(f \vert \mathscr{G})\) on a \(\mathrm{P}\)-null set can be chosen arbitrarily since it will not change the value of the integral \(\int_{A} \mathrm{E}(f \vert \mathscr{G}) \dif \mathrm{P} = \int_{\mathscr{Y}} \mathrm{E}(f \vert \mathscr{G}) \genindica{A} \dif \mathrm{P}\); see Property~\ref{prop measure: equality} below.
Similarly to the unconditional case, the measure-theoretic conditional expectation \(\mathrm{E}(f \vert \mathscr{G})\) of a general \(\mathscr{F}\)-measurable extended real-valued function \(f\) with respect to a \(\sigma\)-algebra \(\mathscr{G} \subseteq \mathscr{F}\) is only defined if \(\min\{\mathrm{E}(f^+ \vert \mathscr{G}), \mathrm{E}(f^- \vert \mathscr{G})\} < +\infty\) \(\mathrm{P}\)-almost surely, and then \(\mathrm{E}(f \vert \mathscr{G}) \coloneqq \mathrm{E}(f^+ \vert \mathscr{G}) - \mathrm{E}(f^- \vert \mathscr{G})\) up to a \(\mathrm{P}\)-null set.
If \(f\) is \(\mathrm{P}\)-integrable, such a conditional expectation \(\mathrm{E}(f \vert \mathscr{G})\) always exists; see also \cite[Section 34]{billingsley1995probability}.
We moreover have the following convenient properties.

\begin{lemma}\label{lemma: properties of conditional measure-theoretic expectation}
For any probability space \((\mathscr{Y},\mathscr{F},\mathrm{P})\) and any two extended real-valued functions \(f\) and \(g\) that are \(\mathscr{F}\)-measurable, the following properties hold, provided that the considered unconditional or conditional expectations exist.
\begin{enumerate}[leftmargin=*,ref={\upshape M}\arabic*,label={\upshape M\arabic*.},itemsep=3pt, series=propertiesmeasure]
\item\label{prop measure: linearity} \(\mathrm{E}(a f + b) = a \mathrm{E}(f) + b\) for all \(a,b \in \reals{}\);
\item\label{prop measure: monotonicity} \(f \leq g\) \(\Rightarrow \mathrm{E}(f) \leq  \mathrm{E}(g)\);
\item\label{prop measure: equality} \(f = g\) {\normalfont{\(\mathrm{P}\)-almost surely }} \(\Rightarrow \mathrm{E}(f) =  \mathrm{E}(g)\);
\item\label{prop measure: condition on sigma-algebra} \(\mathrm{E}(f \vert \mathscr{F}) = f\) {\(\mathrm{P}\)-almost surely};
\item\label{prop measure: conditional to unconditional} \(\mathrm{E}(f \vert \mathscr{G}^\ast) = \mathrm{E}(f)\) where \(\mathscr{G}^\ast = \{\emptyset,\mathscr{Y}\}\);
\item\label{prop measure: law of iterated} \(\mathrm{E}(\mathrm{E}(f \vert \mathscr{G}_2) \vert \mathscr{G}_1) = \mathrm{E}(f \vert \mathscr{G}_1)\) {\(\mathrm{P}\)-almost surely} for every two \(\sigma\)-algebras \(\mathscr{G}_1 \subseteq  \mathscr{G}_2 \subseteq \mathscr{F}\).
\item\label{prop measure: bounds} 
\(\inf f \leq \mathrm{E}(f\vert\mathscr{G})\) {\(\mathrm{P}\)-almost surely} and \/ \(\mathrm{E}(f\vert\mathscr{G}) \leq \sup f\) {\(\mathrm{P}\)-almost surely} for any \(\sigma\)-algebra \(\mathscr{G} \subseteq \mathscr{F}\);
\end{enumerate}
\end{lemma}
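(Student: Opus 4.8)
The plan is to treat Lemma~\ref{lemma: properties of conditional measure-theoretic expectation} as a collection of classical facts about Lebesgue integration and conditional expectation, adapted to our extended-real-valued setting with the stated existence provisos; each individual claim is textbook material, so the proof consists mainly of short arguments from the defining properties together with references to \cite{shiryaev2016probabilityPartI,billingsley1995probability,royden2010real}. First I would dispose of the unconditional statements. For \ref{prop measure: linearity}, the case $a = 0$ is immediate, since $0 \cdot f + b = b$ and $\mathrm{E}(b) = b = 0 \cdot \mathrm{E}(f) + b$ by our convention that $0 \cdot (\pm\infty) = 0$; for $a \neq 0$, I would note that $\mathrm{E}(f)$ exists if and only if $\mathrm{E}(af + b)$ does, because multiplying by a non-zero real and adding a real constant merely rescales and shifts $f^{+}$ and $f^{-}$, and then invoke the homogeneity and translation properties of the Lebesgue integral. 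Monotonicity \ref{prop measure: monotonicity} is the standard comparison $\int_{\mathscr{Y}} f \dif \mathrm{P} \leq \int_{\mathscr{Y}} g \dif \mathrm{P}$ when $f \leq g$ and both integrals exist, obtained by comparing positive and negative parts, while \ref{prop measure: equality} follows because $f = g$ $\mathrm{P}$-almost surely forces $f^{+} = g^{+}$ and $f^{-} = g^{-}$ $\mathrm{P}$-almost surely, so their integrals coincide \cite[Section~2.6.1]{shiryaev2016probabilityPartI}.

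For the conditional versions of \ref{prop measure: linearity}--\ref{prop measure: equality}, and for \ref{prop measure: condition on sigma-algebra}--\ref{prop measure: bounds}, I would argue from the defining property of a conditional expectation together with its $\mathrm{P}$-almost sure uniqueness. For \ref{prop measure: condition on sigma-algebra}, $f$ is itself $\mathscr{F}$-measurable and trivially satisfies $\int_{A} f \dif \mathrm{P} = \int_{A} f \dif \mathrm{P}$ for all $A \in \mathscr{F}$, so $f$ is a version of $\mathrm{E}(f \vert \mathscr{F})$; for \ref{prop measure: conditional to unconditional}, every $\mathscr{G}^\ast$-measurable function is constant, and the constant $\mathrm{E}(f)$ satisfies $\int_{\mathscr{Y}} f \dif \mathrm{P} = \mathrm{E}(f) = \int_{\mathscr{Y}} \mathrm{E}(f) \dif \mathrm{P}$, so it is a version of $\mathrm{E}(f \vert \mathscr{G}^\ast)$. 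The conditional forms of \ref{prop measure: monotonicity} and \ref{prop measure: equality} are the usual consequences of the defining identity plus the fact that an $\mathscr{F}$-measurable function that integrates to a non-positive value over every set in $\mathscr{G}$ must be non-positive $\mathrm{P}$-almost surely, and the conditional form of \ref{prop measure: linearity} follows by checking that $a\,\mathrm{E}(f \vert \mathscr{G}) + b$ is $\mathscr{G}$-measurable and satisfies the defining identity for $af + b$. All of this is collected, essentially in this form, in \cite[Section~2.7]{shiryaev2016probabilityPartI} and \cite[Section~34]{billingsley1995probability}.

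The law of iterated expectations \ref{prop measure: law of iterated} is the classical tower property for nested $\sigma$-algebras $\mathscr{G}_1 \subseteq \mathscr{G}_2 \subseteq \mathscr{F}$: $\mathrm{E}(f \vert \mathscr{G}_1)$ is $\mathscr{G}_1$-measurable, hence also $\mathscr{G}_2$-measurable, and for every $A \in \mathscr{G}_1 \subseteq \mathscr{G}_2$ one has $\int_{A} \mathrm{E}(f \vert \mathscr{G}_2) \dif \mathrm{P} = \int_{A} f \dif \mathrm{P} = \int_{A} \mathrm{E}(f \vert \mathscr{G}_1) \dif \mathrm{P}$, so $\mathrm{E}(f \vert \mathscr{G}_1)$ is a version of $\mathrm{E}(\mathrm{E}(f \vert \mathscr{G}_2) \vert \mathscr{G}_1)$; I would cite \cite[Section~2.7]{shiryaev2016probabilityPartI}. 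Finally, \ref{prop measure: bounds} follows by applying the conditional monotonicity \ref{prop measure: monotonicity} to the chain of constant functions $\inf f \leq f \leq \sup f$ and using the conditional version of \ref{prop measure: linearity} with $a = 0$, which yields $\mathrm{E}(c \vert \mathscr{G}) = c$ $\mathrm{P}$-almost surely for every real constant $c$; if $\inf f = -\infty$ or $\sup f = +\infty$, the corresponding bound is vacuous.

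I expect the only genuinely delicate point to be the bookkeeping around the extended-real-valued setting and the one-sided existence conditions --- in particular, making sure that whenever an (un)conditional expectation is split into its positive and negative parts, the relevant condition $\min\{\mathrm{E}(f^{+}), \mathrm{E}(f^{-})\} < +\infty$, or its $\mathrm{P}$-almost sure conditional analogue, is preserved under the operation at hand --- rather than anything conceptually new; every individual assertion is standard.
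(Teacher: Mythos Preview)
Your proposal is correct and covers all seven items; the arguments you sketch are valid and the references are appropriate. The organisation differs slightly from the paper's own proof, though: the paper first establishes \ref{prop measure: conditional to unconditional} from \cite[Section~2.7.4(\textbf{E*})]{shiryaev2016probabilityPartI}---noting explicitly that the $\mathrm{P}$-almost sure equality there upgrades to equality everywhere because any $\mathscr{G}^\ast$-measurable function is constant---and then derives the unconditional statements \ref{prop measure: linearity}--\ref{prop measure: equality} \emph{from} the corresponding conditional properties in \cite[Section~2.7.4]{shiryaev2016probabilityPartI} by specialising via \ref{prop measure: conditional to unconditional}. You instead prove the unconditional statements directly from the Lebesgue integral and then separately verify the conditional ones from the defining identity. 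Both routes are standard and equally short; the paper's approach has the mild advantage that it outsources all the extended-real bookkeeping to a single reference, while yours is more self-contained. One small point: for \ref{prop measure: conditional to unconditional} you show that $\mathrm{E}(f)$ is \emph{a} version of $\mathrm{E}(f\vert\mathscr{G}^\ast)$, but the statement asserts equality; you should add the observation (which the paper makes explicit) that any version must be constant, so all versions coincide.
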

\begin{proof}
Let us first show that \ref{prop measure: conditional to unconditional} follows from \cite[Section 2.7.4(\textbf{E*})]{shiryaev2016probabilityPartI}.
There, it is stated that \(\mathrm{E}(f \vert \mathscr{G}^\ast) = \mathrm{E}(f)\) holds \(\mathrm{P}\)-almost surely.
However, since \(\mathscr{G}^\ast = \{\emptyset,\mathscr{Y}\}\) and \(\mathrm{E}(f \vert \mathscr{G}^\ast)\) is \(\mathscr{G}^\ast\)-measurable, we find that \(\mathrm{E}(f \vert \mathscr{G}^\ast)\) must be constant.
So if \(\mathrm{E}(f \vert \mathscr{G}^\ast) = \mathrm{E}(f)\) holds \(\mathrm{P}\)-almost surely, it also holds on the entire domain \(\mathscr{Y}\).

To prove Property \ref{prop measure: linearity}, consider \cite[Section 2.7.4(\textbf{D*})]{shiryaev2016probabilityPartI} which in particular states that \(\mathrm{E}(a f + b \vert \mathscr{G}^\ast) = a \mathrm{E}(f \vert \mathscr{G}^\ast) + \mathrm{E}(b \vert \mathscr{G}^\ast)\) $\mathrm{P}$-almost surely.
Since $\mathrm{E}(b \vert \mathscr{G}^\ast) = b$ $\mathrm{P}$-almost surely by \cite[Section 2.7.4(\textbf{A*})]{shiryaev2016probabilityPartI}, we have that \(\mathrm{E}(a f + b \vert \mathscr{G}^\ast) = a \mathrm{E}(f \vert \mathscr{G}^\ast) + b\) $\mathrm{P}$-almost surely, which by \ref{prop measure: conditional to unconditional} implies that \(\mathrm{E}(a f + b) = a \mathrm{E}(f) + b\) $\mathrm{P}$-almost surely.
Since both sides are constants, \ref{prop measure: linearity} follows.

Property~\ref{prop measure: monotonicity} follows from \cite[Section 2.7.4(\textbf{B*})]{shiryaev2016probabilityPartI}, which states that $\mathrm{E}(f \vert \mathscr{G}^\ast) \leq \mathrm{E}(g \vert \mathscr{G}^\ast)$ $\mathrm{P}$-almost surely if $f\leq g$ $\mathrm{P}$-almost surely.
Indeed, if $f\leq g$, then certainly $f\leq g$ $\mathrm{P}$-almost surely and then \cite[Section 2.7.4(\textbf{B*})]{shiryaev2016probabilityPartI} together with \ref{prop measure: conditional to unconditional} implies that $\mathrm{E}(f)\leq\mathrm{E}(g)$ $\mathrm{P}$-almost surely; since both sides are constants, \ref{prop measure: monotonicity} follows.

To prove~\ref{prop measure: equality}, note that $f=g$ $\mathrm{P}$-almost surely implies that both $f\leq g$ $\mathrm{P}$-almost surely and $f\geq g$ $\mathrm{P}$-almost surely.
Then \ref{prop measure: equality} follows by applying the previous reasoning to both of these inequalities.
Furthermore, Properties~\ref{prop measure: condition on sigma-algebra} and~\ref{prop measure: law of iterated} are taken directly from \cite[Section 2.7.4]{shiryaev2016probabilityPartI}.

To see that property~\ref{prop measure: bounds} holds, consider \cite[Section 2.7.4(\textbf{A*})]{shiryaev2016probabilityPartI}, which in particular says that $\mathrm{E}(\sup f \vert \mathscr{G}) = \sup f$ $\mathrm{P}$-almost surely.
Combining this with mononicity \cite[Section~2.7.4(\textbf{B*})]{shiryaev2016probabilityPartI}, and taking into account that the intersection of two $\mathrm{P}$-almost sure events is itself also $\mathrm{P}$-almost sure, we indeed find that $\mathrm{E}(f \vert \mathscr{G}) \leq \sup f$ $\mathrm{P}$-almost surely.
The fact that  $\inf f \leq \mathrm{E}(f \vert \mathscr{G})$ $\mathrm{P}$-almost surely can then easily be obtained using the linearity of $\mathrm{E}(\cdot \vert \mathscr{G})$ \cite[Section~2.7.4(\textbf{D*})]{shiryaev2016probabilityPartI}.
\end{proof}

\begin{lemma}\label{lemma: limits for precise measure-theoretic expectation}
Consider any probability space \((\mathscr{Y},\mathscr{F},\mathrm{P})\), any \(\sigma\)-algebra \(\mathscr{G} \subseteq \mathscr{F}\) and any sequence \(\{f_n\}_{n \in \natz{}}\) of extended real-valued functions such that \(f_n\) is \(\mathscr{F}\)-measurable for all \(n \in \natz{}\).
\begin{enumerate}[leftmargin=*,ref={\upshape M}\arabic*,label={\upshape M\arabic*.},itemsep=3pt,resume=propertiesmeasure]
\item\label{lemma: limits for precise measure-theoretic expectation ii} 
If \/ \(\{f_n\}_{n \in \natz{}}\) is non-decreasing and there is an \(\mathscr{F}\)-measurable function \(f^\ast\) such that \/ \(\mathrm{E}(f^\ast) > -\infty\) and \(f_n \geq f^\ast\) for all \(n \in \natz{}\), then
\begin{align*}
\lim_{n \to +\infty} \mathrm{E}(f_n) = \mathrm{E}(f)
\text{ where } \lim_{n \to +\infty} f_n = f.
\end{align*} 

\item\label{lemma: limits for precise measure-theoretic expectation iii} 
If \/ \(\{f_n\}_{n \in \natz{}}\) is non-increasing and there is an \(\mathscr{F}\)-measurable function \(f^\ast\) such that \/ \(\mathrm{E}(f^\ast) < +\infty\) and \(f_n \leq f^\ast\) for all \(n \in \natz{}\), then
\begin{align*}
\lim_{n \to +\infty} \mathrm{E}(f_n) = \mathrm{E}(f)
\text{ where } \lim_{n \to +\infty} f_n = f.
\end{align*} 
\end{enumerate}
\end{lemma}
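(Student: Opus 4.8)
The plan is to reduce both items to the classical Monotone Convergence Theorem (Beppo Levi) for non-negative non-decreasing sequences, see e.g. \cite{shiryaev2016probabilityPartI,royden2010real}, after first making sure every expectation that occurs is well-defined. For \ref{lemma: limits for precise measure-theoretic expectation ii} I would begin by noting that all of $\mathrm{E}(f_n)$, $n\in\natz$, and $\mathrm{E}(f)$ exist: since the sequence is non-decreasing we have $f\geq f_n\geq f^\ast$ pointwise, hence $(f_n)^-\leq(f^\ast)^-$ and $f^-\leq(f^\ast)^-$, while $\mathrm{E}(f^\ast)>-\infty$ forces $\int_{\mathscr Y}(f^\ast)^-\dif\mathrm{P}<+\infty$, so the defining integrals all make sense. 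By \ref{prop measure: monotonicity} the numbers $\mathrm{E}(f_n)$ form a non-decreasing sequence, so $\lim_{n\to+\infty}\mathrm{E}(f_n)$ exists in $\extreals$.

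Next I would split on the value of $\mathrm{E}(f_0)$. If $\mathrm{E}(f_0)=+\infty$, then \ref{prop measure: monotonicity} gives $\mathrm{E}(f_n)=+\infty$ for all $n$ and $\mathrm{E}(f)=+\infty$, so the equality is trivial. Otherwise $\mathrm{E}(f_0)\in\reals$ (it cannot be $-\infty$, since $\int_{\mathscr Y}(f_0)^-\dif\mathrm{P}<+\infty$), whence $f_0$ is $\mathrm{P}$-integrable and therefore real-valued on a set $B:=\{y:f_0(y)\in\reals\}\in\mathscr F$ with $\mathrm{P}(B^c)=0$. I would then set $g_n:=(f_n-f_0)\genindica{B}$, observe that each $g_n$ is $\mathscr F$-measurable with $0\leq g_n$, that $\{g_n\}_{n\in\natz}$ is non-decreasing, and that $g_n\to g:=(f-f_0)\genindica{B}$ pointwise (subtracting the finite number $f_0(y)$ is a homeomorphism of $\extreals$, so the limit passes through even where $f=\pm\infty$). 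The Monotone Convergence Theorem then yields $\int_{\mathscr Y}g_n\dif\mathrm{P}\to\int_{\mathscr Y}g\dif\mathrm{P}$.

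To conclude, I would use the pointwise identity $f_n\genindica{B}=g_n+f_0\genindica{B}$ together with the fact that $g_n\geq 0$ and $f_0\genindica{B}$ is integrable to split the integral, obtaining $\mathrm{E}(f_n)=\int_{\mathscr Y}f_n\genindica{B}\dif\mathrm{P}=\int_{\mathscr Y}g_n\dif\mathrm{P}+\mathrm{E}(f_0)$, where the first equality is \ref{prop measure: equality} applied to $f_n\genindica{B}=f_n$ $\mathrm{P}$-a.s. The identical computation with $f$ in place of $f_n$ gives $\mathrm{E}(f)=\int_{\mathscr Y}g\dif\mathrm{P}+\mathrm{E}(f_0)$; letting $n\to+\infty$ and invoking the convergence of the $\int_{\mathscr Y}g_n\dif\mathrm{P}$ then gives $\lim_{n\to+\infty}\mathrm{E}(f_n)=\mathrm{E}(f)$. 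For \ref{lemma: limits for precise measure-theoretic expectation iii} I would simply apply \ref{lemma: limits for precise measure-theoretic expectation ii} to the non-decreasing sequence $\{-f_n\}_{n\in\natz}$ with dominating function $-f^\ast$: indeed $-f_n\geq-f^\ast$, and $\mathrm{E}(-f^\ast)=-\mathrm{E}(f^\ast)>-\infty$ by \ref{prop measure: linearity}, while $\mathrm{E}(f_n)$ exists since $(f_n)^+\leq(f^\ast)^+$ and $\int_{\mathscr Y}(f^\ast)^+\dif\mathrm{P}<+\infty$; one then translates the resulting limit back through \ref{prop measure: linearity} with $a=-1$, $b=0$. (The $\sigma$-algebra $\mathscr G$ in the hypotheses plays no role in either item, since only unconditional expectations occur.) The main obstacle is not any deep step but the bookkeeping with extended-real arithmetic — guaranteeing that no expectation degenerates into an undefined $\infty-\infty$ — which is precisely why the argument first isolates the case $\mathrm{E}(f_0)=+\infty$ and then works on the $\mathrm{P}$-full set $B$ where $f_0$ is finite, so that the decomposition $f_n\genindica{B}=g_n+f_0\genindica{B}$ and the additivity of the Lebesgue integral over it are legitimate.
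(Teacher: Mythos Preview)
Your argument is correct. The paper's own proof is a one-line citation: it invokes \cite[Theorem~2.7.2]{shiryaev2016probabilityPartI}, which states the monotone convergence theorems directly for conditional expectations $\mathrm{E}(\cdot\vert\mathscr{G})$, and then applies \ref{prop measure: conditional to unconditional} with $\mathscr{G}=\{\emptyset,\mathscr{Y}\}$ to collapse the $\mathrm{P}$-a.s.\ conditional statements to genuine equalities between constants. Your route is different and more elementary: you reduce everything to the classical Beppo Levi theorem for non-negative non-decreasing sequences, handling the extended-real bookkeeping yourself by isolating the case $\mathrm{E}(f_0)=+\infty$, passing to the $\mathrm{P}$-full set $B=\{f_0\in\reals\}$, and decomposing $f_n\genindica{B}=g_n+f_0\genindica{B}$ with $g_n\geq 0$ and $f_0\genindica{B}$ integrable. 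This buys self-containedness---you never need the conditional-expectation machinery or the external reference---at the cost of length; the paper's approach is shorter but outsources the real work to Shiryaev. Your observation that the hypothesis $\mathscr{G}\subseteq\mathscr{F}$ plays no role is also correct: the paper carries $\mathscr{G}$ in the lemma statement only because the enumerate environment is shared with Lemma~\ref{lemma: properties of conditional measure-theoretic expectation}, where it does matter. One small point you leave implicit is the $\mathscr{F}$-measurability of $g_n=(f_n-f_0)\genindica{B}$; this holds because subtraction is Borel on $\overline{\reals}\times\reals$ and $B\in\mathscr{F}$, but it would not hurt to say so.
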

\begin{proof}
Both properties can directly be obtained by applying \ref{prop measure: conditional to unconditional} to the properties stated in \cite[Theorem 2.7.2]{shiryaev2016probabilityPartI} and observing that both sides of the acquired equations will be constants (allowing us to infer that the equations always hold, instead of~\(\mathrm{P}\)-almost surely as is the case for the more general versions in \cite[Theorem 2.7.2]{shiryaev2016probabilityPartI}).
\end{proof}

A (discrete) \emph{filtration}
\(\{\mathscr{F}_n\}_{n \in \natz{}}\) on a measurable space \((\mathscr{Y},\mathscr{F})\) is a sequence of increasing \(\sigma\)-algebras in \(\mathscr{F}\); so \(\mathscr{F}_0 \subseteq \mathscr{F}_1 \subseteq \cdots \mathscr{F}\).
We will then also use \(\mathscr{F}_\infty\) to denote the smallest \(\sigma\)-algebra \(\sigma(\cup_{n \in \natz{}} \mathscr{F}_n)\) generated by the \(\sigma\)-algebras \(\mathscr{F}_n\).
We say that \((\mathscr{Y},\mathscr{F},\{\mathscr{F}_n\}_{n \in \natz{}})\) is a \emph{filtered measurable space} if \((\mathscr{Y},\mathscr{F})\) is equipped with a filtration \(\{\mathscr{F}_n\}_{n \in \natz{}}\), and moreover say that \((\mathscr{Y},\mathscr{F},\{\mathscr{F}_n\}_{n \in \natz{}},\mathrm{P})\) is a \emph{filtered probability space} if it additionally has a \(\sigma\)-additive measure \(\mathrm{P}\) on \(\mathscr{F}\).
A sequence \(\{\mathscr{N}_n\}_{n \in \natz{}}\) of extended real-valued functions on \(\mathscr{Y}\) is called a \emph{measure-theoretic process} in a filtered probability space \((\mathscr{Y},\mathscr{F},\{\mathscr{F}_n\}_{n \in \natz{}}, \mathrm{P})\) if \(\mathscr{N}_n\) is \(\mathscr{F}_n\)-measurable for all \(n \in \natz\).
It is moreover called a \emph{measure-theoretic martingale} if \(\mathscr{N}_n\) is real-valued and \(\mathrm{E}(\mathscr{N}_{n+1} \vert \mathscr{F}_{n}) = \mathscr{N}_{n}\) \(\mathrm{P}\)-almost surely for all \(n \in \natz{}\), where the existence of $\mathrm{E}(\mathscr{N}_{n+1} \vert \mathscr{F}_{n})$ is an implicit condition.
The following two results are fundamental in establishing a relation between the measure-theoretic and the game-theoretic framework.
\begin{proposition}[L\'evy's zero-one law; {\cite[Theorem 7.4.3]{shiryaev2019probabilityPartII}}]\label{Prop: Levy measure-theoretic}
For any filtered probability space \((\mathscr{Y},\mathscr{F},\{\mathscr{F}_n\}_{n \in \natz{}}, \mathrm{P})\) and any\/ \(\mathrm{P}\)-integrable function \(f\), we have that
\begin{align*}
\lim_{n \to +\infty} \mathrm{E}(f \vert \mathscr{F}_n) = \mathrm{E}(f \vert \mathscr{F}_\infty) \, \text{\normalfont \, \(\mathrm{P}\)-almost surely}
\end{align*} 
\end{proposition}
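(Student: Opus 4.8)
This is a classical result and in the paper it suffices to invoke \cite[Theorem~7.4.3]{shiryaev2019probabilityPartII}; for completeness, here is how one would prove it. The plan is to recognise the sequence $\mathscr{N}_n \coloneqq \mathrm{E}(f \vert \mathscr{F}_n)$, $n \in \natz{}$, as a uniformly integrable measure-theoretic martingale for the filtration $\{\mathscr{F}_n\}_{n \in \natz{}}$, apply Doob's martingale convergence theorem to obtain an almost sure and $L^1(\mathrm{P})$ limit $\mathscr{N}_\infty$, and then identify $\mathscr{N}_\infty$ with $\mathrm{E}(f \vert \mathscr{F}_\infty)$ by exploiting that $\bigcup_{n \in \natz{}} \mathscr{F}_n$ is an algebra that generates $\mathscr{F}_\infty$.

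First I would check that $\{\mathscr{N}_n\}_{n \in \natz{}}$ is a measure-theoretic martingale: each $\mathscr{N}_n$ is $\mathscr{F}_n$-measurable by construction, it is $\mathrm{P}$-integrable since $\vert \mathscr{N}_n \vert \leq \mathrm{E}(\vert f \vert \vert \mathscr{F}_n)$ $\mathrm{P}$-a.s.\ by conditional Jensen and $\mathrm{E}(\mathrm{E}(\vert f \vert \vert \mathscr{F}_n)) = \mathrm{E}(\vert f \vert) < +\infty$, and the identity $\mathrm{E}(\mathscr{N}_{n+1} \vert \mathscr{F}_n) = \mathscr{N}_n$ $\mathrm{P}$-a.s.\ is exactly the tower property \ref{prop measure: law of iterated} applied to $\mathscr{F}_n \subseteq \mathscr{F}_{n+1}$. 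Next I would establish uniform integrability of $\{\mathscr{N}_n\}_{n \in \natz{}}$: for $c > 0$ the set $\{\vert \mathscr{N}_n \vert > c\}$ belongs to $\mathscr{F}_n$, so by the previous bound and the defining property of conditional expectation $\int_{\{\vert \mathscr{N}_n \vert > c\}} \vert \mathscr{N}_n \vert \dif \mathrm{P} \leq \int_{\{\vert \mathscr{N}_n \vert > c\}} \vert f \vert \dif \mathrm{P}$, and combining $\mathrm{P}(\vert \mathscr{N}_n \vert > c) \leq \mathrm{E}(\vert f \vert)/c$ with the absolute continuity of the finite measure $A \mapsto \int_A \vert f \vert \dif \mathrm{P}$ makes this uniformly small in $n$ as $c \to +\infty$.

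By Doob's martingale convergence theorem, $\{\mathscr{N}_n\}_{n \in \natz{}}$ then converges $\mathrm{P}$-a.s.\ and in $L^1(\mathrm{P})$ to a $\mathrm{P}$-integrable limit $\mathscr{N}_\infty$, which is $\mathscr{F}_\infty$-measurable as a pointwise a.s.\ limit of $\mathscr{F}_\infty$-measurable functions (each $\mathscr{F}_n \subseteq \mathscr{F}_\infty$). To conclude I would verify the defining property of $\mathrm{E}(f \vert \mathscr{F}_\infty)$: for any $A \in \mathscr{F}_m$ and $n \geq m$, $\int_A \mathscr{N}_n \dif \mathrm{P} = \int_A f \dif \mathrm{P}$ by definition of $\mathscr{N}_n = \mathrm{E}(f \vert \mathscr{F}_n)$, and letting $n \to +\infty$ (using the $L^1$-convergence) gives $\int_A \mathscr{N}_\infty \dif \mathrm{P} = \int_A f \dif \mathrm{P}$ for all $A \in \bigcup_{m \in \natz{}} \mathscr{F}_m$. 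Since this union is an algebra generating $\mathscr{F}_\infty$ and both sides, as functions of $A$, are finite signed measures on $\mathscr{F}_\infty$, a monotone class ($\pi$--$\lambda$) argument extends the identity to every $A \in \mathscr{F}_\infty$; together with the $\mathscr{F}_\infty$-measurability of $\mathscr{N}_\infty$ and uniqueness of conditional expectations, this yields $\mathscr{N}_\infty = \mathrm{E}(f \vert \mathscr{F}_\infty)$ $\mathrm{P}$-a.s. I expect the main obstacles to be the uniform integrability estimate and the monotone class step that propagates the integral identity from the generating algebra $\bigcup_{n \in \natz{}} \mathscr{F}_n$ to all of $\mathscr{F}_\infty$; both are routine but need care with the $\mathrm{P}$-null sets.
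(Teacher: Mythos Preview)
Your reading is correct: the paper does not prove this proposition at all but simply imports it from \cite[Theorem~7.4.3]{shiryaev2019probabilityPartII}, so there is nothing to compare against. The argument you sketch---recognising $\{\mathrm{E}(f\vert\mathscr{F}_n)\}_{n\in\natz}$ as a uniformly integrable martingale, applying Doob's convergence theorem, and identifying the limit via a $\pi$--$\lambda$ argument on the generating algebra $\bigcup_{n\in\natz}\mathscr{F}_n$---is the standard proof and is correct as outlined.
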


\begin{proposition}[Ville's theorem; {\cite[Proposition 8.14]{Shafer:2005wx}}]\label{Prop: Ville}
For any filtered probability space \((\mathscr{Y},\mathscr{F},\{\mathscr{F}_n\}_{n \in \natz{}}, \mathrm{P})\) and any \(A \in \mathscr{F}_\infty\), we have that \(\mathrm{P}(A) = 0\) if and only if there is a non-negative measure-theoretic martingale that converges to \(+\infty\) on \(A\).
\end{proposition}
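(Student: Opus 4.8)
The statement is a classical result (Ville; Shafer and Vovk), and the plan is simply to recall a self-contained argument; the two directions are quite asymmetric in difficulty.

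For the ``if'' direction, suppose $\{\mathscr{N}_n\}_{n\in\natz}$ is a non-negative measure-theoretic martingale with $\lim_{n\to+\infty}\mathscr{N}_n(y)=+\infty$ for every $y\in A$. Fix $c\in\posreals$ and localise on the $\mathscr{F}_0$-measurable event $B_c\coloneqq\{y\colon\mathscr{N}_0(y)\leq c\}$: the process $\{\mathscr{N}_n\genindica{B_c}\}_{n\in\natz}$ is then a non-negative martingale with $\mathrm{E}(\mathscr{N}_0\genindica{B_c})\leq c$, so Doob's maximal inequality \cite{williams1991probability} gives $\mathrm{P}\bigl(\sup_{k\in\natz}\mathscr{N}_k\genindica{B_c}\geq\lambda\bigr)\leq c/\lambda$ for every $\lambda\in\posreals$. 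Since $A\cap B_c\subseteq\bigl\{\sup_{k\in\natz}\mathscr{N}_k\genindica{B_c}\geq\lambda\bigr\}$ for all $\lambda$, letting $\lambda\to+\infty$ gives $\mathrm{P}(A\cap B_c)=0$, and letting $c\to+\infty$ gives $\mathrm{P}(A)=0$, using that $\mathscr{N}_0$ is real-valued so that $B_c\uparrow\mathscr{Y}$.

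For the ``only if'' direction, fix $A\in\mathscr{F}_\infty$ with $\mathrm{P}(A)=0$. The key idea is to work with the generating algebra $\mathcal{A}\coloneqq\bigcup_{n\in\natz}\mathscr{F}_n$ rather than with $\mathscr{F}_\infty$ itself, since the conditional expectation of the indicator of a set in $\mathcal{A}$ \emph{stabilises} after finitely many steps. Because $\mathrm{P}$ restricted to $\mathscr{F}_\infty$ is the Carathéodory extension of its restriction to $\mathcal{A}$, the outer-measure characterisation of $\mathrm{P}(A)=0$ yields, for each $k\in\nats$, a countable family $\{C^{(k)}_i\}_{i\in\nats}$ in $\mathcal{A}$ with $A\subseteq\bigcup_{i\in\nats}C^{(k)}_i$ and $\sum_{i\in\nats}\mathrm{P}(C^{(k)}_i)<2^{-k}$. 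For each $(k,i)$ pick $m(k,i)\in\natz$ with $C^{(k)}_i\in\mathscr{F}_{m(k,i)}$, and let $\mathscr{N}^{(k,i)}_n\coloneqq\mathrm{E}(\genindica{C^{(k)}_i}\mid\mathscr{F}_n)$, taking the version that equals $\genindica{C^{(k)}_i}$ for all $n\geq m(k,i)$ (legitimate since the latter is already $\mathscr{F}_n$-measurable there); this is a non-negative martingale with $\mathrm{E}(\mathscr{N}^{(k,i)}_0)=\mathrm{P}(C^{(k)}_i)$. Now set $\mathscr{N}_n\coloneqq\sum_{k\in\nats}\sum_{i\in\nats}\mathscr{N}^{(k,i)}_n$. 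Then $\mathscr{N}$ is non-negative, $\mathrm{E}(\mathscr{N}_0)=\sum_{k,i}\mathrm{P}(C^{(k)}_i)<1$, it is real-valued at each time (everywhere, outside the degenerate case of a filtration with null atoms, where one passes to a $\mathrm{P}$-a.s.-finite version), and $\mathrm{E}(\mathscr{N}_{n+1}\mid\mathscr{F}_n)=\mathscr{N}_n$ by conditional monotone convergence \cite{shiryaev2016probabilityPartI}. Finally, for $y\in A$ and $K\in\nats$, choosing for each $k\leq K$ an index $i_k$ with $y\in C^{(k)}_{i_k}$ gives $\mathscr{N}_n(y)\geq\sum_{k=1}^K\mathscr{N}^{(k,i_k)}_n(y)=K$ as soon as $n\geq\max_{k\leq K}m(k,i_k)$, so $\mathscr{N}_n(y)\to+\infty$.

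The hard part is the ``only if'' direction, and specifically the requirement that the martingale tend to $+\infty$ at \emph{every} point of $A$ rather than merely $\mathrm{P}$-almost surely: the obvious candidate $\mathscr{N}_n=\mathrm{E}(\genindica{A}\mid\mathscr{F}_n)$ converges only $\mathrm{P}$-a.s., and since $A$ is $\mathrm{P}$-null it carries no information about $A$ itself. Passing to the algebra $\mathcal{A}$ and using the \emph{stabilising} versions $\mathscr{N}^{(k,i)}_n=\genindica{C^{(k)}_i}$ for large $n$ is exactly what turns ``a.s.'' into ``everywhere on $A$'', while the normalisation $\sum_{k,i}\mathrm{P}(C^{(k)}_i)<1$ keeps the total initial capital bounded; everything else---the existence of the covers, the martingale property of the countable sum, and the maximal inequality in the converse---is routine (Carathéodory's outer measure, conditional monotone convergence, and Doob, respectively).
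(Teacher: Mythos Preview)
The paper does not prove this proposition; it is simply cited from \cite[Proposition~8.14]{Shafer:2005wx} and invoked as a black box in the proof of Proposition~\ref{prop: unconditional precise measure is equal to game for bounded}. Your argument is the standard self-contained reconstruction and is essentially correct: the ``if'' direction via localisation on $\{\mathscr{N}_0\leq c\}$ and Doob's maximal inequality is clean, and the ``only if'' direction via Carath\'eodory covers from $\mathcal{A}=\bigcup_n\mathscr{F}_n$, with the stabilising versions $\mathscr{N}^{(k,i)}_n=\genindica{C^{(k)}_i}$ for $n\geq m(k,i)$, correctly isolates why one obtains pointwise rather than merely almost-sure convergence on $A$.

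There is, however, one place where your parenthetical hand-wave is actually wrong, not just informal. You acknowledge that $\mathscr{N}_n=\sum_{k,i}\mathscr{N}^{(k,i)}_n$ may fail to be real-valued and propose to ``pass to a $\mathrm{P}$-a.s.-finite version''. But $A$ itself is $\mathrm{P}$-null, so modifying $\mathscr{N}_n$ on an \emph{arbitrary} null set may destroy the pointwise convergence on $A$---which is precisely the conclusion you need. The fix has to be specific: redefine $\tilde{\mathscr{N}}_n\coloneqq n$ on the $\mathscr{F}_n$-measurable null set $\{\mathscr{N}_n=+\infty\}$ and leave $\tilde{\mathscr{N}}_n\coloneqq\mathscr{N}_n$ elsewhere. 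This is still a martingale (modifications on null sets preserve the a.s.\ identity), is real-valued everywhere, and for $y\in A$ one still has $\tilde{\mathscr{N}}_n(y)\geq K$ whenever $n\geq\max\bigl\{K,\max_{k\leq K}m(k,i_k)\bigr\}$, regardless of whether the original $\mathscr{N}_n(y)$ was finite there.
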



\section{Proof of {}Proposition~\ref{prop: unconditional precise measure is equal to game for bounded}}\label{Sect: Proofs}

Proposition~\ref{prop: unconditional precise measure is equal to game for bounded} will be shown to hold by establishing the equivalence between the global measure-theoretic and global game-theoretic (upper) expectation on a particular restricted domain and then applying Theorem~\ref{theorem: Vovk is the largest}.
Before we relate both global models, however, we first establish the following alternative characterisation for the unique extension $\extlupprev{s}$ of the local linear (upper) expectation $\lupprev{s}$ corresponding to any probability mass function $p_s$ on $\statespace{}$:

\begin{lemma}\label{lemma: extend precise local models to bounded belows}
Consider any probability mass function $p_s$ on $\statespace{}$ and let \/ \(\lupprev{s}\) be the unique linear expectation on \(\setofgengambles{}(\statespace{})\) corresponding to $p_s$.
Then we have that \(\extlupprev{s}(f) = \smash{\sum_{x \in \statespace{}} f(x) p_s(x)}\) for all \(f \in \setofgenextvariables{}(\statespace{})\).
\end{lemma}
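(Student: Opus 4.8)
The plan is to exhibit the operator $F\colon\setofgenextvariables{}(\statespace{})\to\extreals{}$ defined by $F(f)\coloneqq\sum_{x\in\statespace{}}f(x)p_s(x)$ and to show that it satisfies exactly the conditions that, by \cite[Proposition~10~(ii)--(iii)]{Tjoens2020FoundationsARXIV}, pin down the unique extension $\extlupprev{s}$ of $\lupprev{s}$ to $\setofgenextvariables{}(\statespace{})$; this immediately gives $F=\extlupprev{s}$, which is the claim. Note first that $F(f)$ is a well-defined element of $\extreals{}$: since $\statespace{}$ is finite, $F(f)$ is a finite sum of extended reals, and our arithmetic conventions (in particular $0\cdot(+\infty)=0\cdot(-\infty)=0$ and $+\infty-\infty=-\infty+\infty=+\infty$) make such a finite sum unambiguous (it equals $+\infty$ as soon as one summand is $+\infty$, equals $-\infty$ if no summand is $+\infty$ but some summand is $-\infty$, and is the usual real sum otherwise).

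First I would check that $F$ extends $\lupprev{s}$ and is an upper expectation of the required kind. For a gamble $f\in\setofgengambles{}(\statespace{})$ all values $f(x)$ are real, so $F(f)=\sum_{x\in\statespace{}}f(x)p_s(x)=\lupprev{s}(f)$ by the representation of linear expectations recalled in Section~\ref{Sect: upprev}. The structural properties of $F$ (non-negative homogeneity, additivity and hence sub-additivity, monotonicity, and continuity along non-decreasing non-negative sequences) all reduce to working term by term in the finite sum $\sum_{x}f(x)p_s(x)$: each term $f(x)p_s(x)$ is monotone and additive in $f(x)$ and homogeneous for non-negative scalars under the stated conventions, while for continuity one uses that $f_n(x)p_s(x)\uparrow f(x)p_s(x)$ in $\extreals{}$ whenever $p_s(x)>0$ and that the remaining terms vanish.

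The core of the argument is to verify continuity with respect to upper and lower cuts, \ref{upper cuts} and \ref{lower cuts}. For \ref{upper cuts}, fix $f\in\setofgenextvariablesb{}(\statespace{})$ and put $A\coloneqq\{x\in\statespace{}\colon f(x)=+\infty\}$; because $f$ is bounded below and $\statespace{}$ is finite, $\{f(x)\colon x\notin A\}$ is a finite set of reals, so for all real $c$ large enough one has $f^{\wedge c}(x)=f(x)$ for $x\notin A$ and $f^{\wedge c}(x)=c$ for $x\in A$, whence $F(f^{\wedge c})=c\sum_{x\in A}p_s(x)+\sum_{x\notin A}f(x)p_s(x)$ for all such $c$. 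As $c\to+\infty$ this tends to $+\infty$ when $\sum_{x\in A}p_s(x)>0$ and is eventually constant and equal to $\sum_{x\notin A}f(x)p_s(x)$ when $\sum_{x\in A}p_s(x)=0$; evaluating $F(f)$ with the conventions shows that in both cases the limit equals $F(f)$. Property \ref{lower cuts} is proved symmetrically, now partitioning $\statespace{}$ according to whether $f(x)=+\infty$, $f(x)=-\infty$, or $f(x)\in\reals{}$ and letting $c\to-\infty$; the only point that needs attention is that a state $x$ with $f(x)=+\infty$ and $p_s(x)>0$ makes both $F(f^{\vee c})=+\infty$ for every $c$ and $F(f)=+\infty$, so consistency is maintained.

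Having established that $F$ extends $\lupprev{s}$ and satisfies \ref{upper cuts} and \ref{lower cuts} (and is an upper expectation of the relevant type), the uniqueness assertion in \cite[Proposition~10~(ii)--(iii)]{Tjoens2020FoundationsARXIV} forces $F=\extlupprev{s}$, i.e.\ $\extlupprev{s}(f)=\sum_{x\in\statespace{}}f(x)p_s(x)$ for every $f\in\setofgenextvariables{}(\statespace{})$. I expect the main (though essentially bookkeeping) obstacle to be tracking the $\pm\infty$ conventions correctly through the case analysis for \ref{upper cuts} and \ref{lower cuts}, in particular the subcases where $p_s$ assigns zero probability to the states at which $f$ is infinite.
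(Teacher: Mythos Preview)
Your proposal is correct. The approach differs from the paper's only in framing: the paper computes $\extlupprev{s}$ directly---using its known properties \ref{upper cuts} and \ref{lower cuts} to evaluate $\extlupprev{s}(f)=\lim_{c\to+\infty}\extlupprev{s}(f^{\wedge c})$ on $\setofgenextvariablesb{}(\statespace{})$ and then $\extlupprev{s}(f)=\lim_{c\to-\infty}\extlupprev{s}(f^{\vee c})$ on all of $\setofgenextvariables{}(\statespace{})$, and at each step pushing the limit through the finite sum to land on $\sum_x f(x)p_s(x)$---whereas you instead verify that the candidate $F(f)=\sum_x f(x)p_s(x)$ itself satisfies \ref{upper cuts} and \ref{lower cuts} and then invoke the uniqueness statement from \cite[Proposition~10]{Tjoens2020FoundationsARXIV}. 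The underlying limit computations are the same; the paper's direct route is marginally more self-contained since it never needs to appeal to uniqueness, while yours makes the logical structure (``$F$ satisfies the characterising axioms, hence $F=\extlupprev{s}$'') more explicit. Your extra verification of \ref{vovk local coherence: const is const}--\ref{vovk local coherence: continuity wrt non-negative functions} is not needed for the uniqueness argument as stated in the paper, but it does no harm.
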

\begin{proof}
Let $\upprev{s}\colon\setofgenextvariables{}(\statespace{})\to\extreals{}$ be defined by $\smash{\upprev{s}(f)\coloneqq\smash{\sum_{x \in \statespace{}} f(x) p_s(x)}}$ for all \(f \in \setofgenextvariables{}(\statespace{})\).
We show that $\smash{\extlupprev{s}}$ coincides with $\upprev{s}$.
It is clear that this is the case on the domain $\setofgengambles{}(\statespace{})$ of all gambles on $\statespace{}$; this follows immediately from the fact that $\extlupprev{s}$ is an extension of $\lupprev{s}$ together with the fact that $\upprev{s}(f) = \smash{\sum_{x \in \statespace{}} f(x) p_s(x)} = \lupprev{s}(f)$ for all $f\in\setofgengambles{}(\statespace{})$.
To see that both coincide on $\smash{\setofgenextvariablesb{}(\statespace{})}$, consider any $\smash{f\in\setofgenextvariablesb{}(\statespace{})}$ and observe that
\begin{align*}
\extlupprev{s}(f) 
\overset{\text{\ref{upper cuts}}}{=} \lim_{c\to+\infty} \extlupprev{s}(f^{\wedge c})
= \lim_{c\to+\infty} \sum_{x \in \statespace{}} f^{\wedge c}(x) p_s(x)
= \sum_{x \in \statespace{}} \lim_{c\to+\infty} f^{\wedge c}(x) p_s(x)
&= \sum_{x \in \statespace{}}  p_s(x) \lim_{c\to+\infty} f^{\wedge c}(x)\\
&= \sum_{x \in \statespace{}}  p_s(x) f(x)
= \upprev{s}(f),
\end{align*}
where the second equality follows from the fact that $f^{\wedge c}\in\setofgengambles{}(\statespace{})$ for any $c\in\reals{}$ and, as we have just shown, $\extlupprev{s}$ and $\upprev{s}$ coincide on $\setofgengambles{}(\statespace{})$; 
where the third equality follows from the fact that \(\statespace{}\) is finite and that, for all \(x\in \statespace{}\), \(f^{\wedge c}(x) p_s(x)\) is real and non-decreasing in $c$; 
where the fourth equality follows from the fact that \(p_s(x) \geq 0\) for all \(x \in \statespace{}\) and our convention that $0\cdot(+\infty) = 0$; and where the last equality follows from the definition of \(\upprev{s}\).
In an analogous way, we show that $\extlupprev{s}$ and $\upprev{s}$ coincide on their entire domain $\setofgenextvariables{}(\statespace{})$: consider any \(f\in\setofgenextvariables{}(\statespace{})\) and note that
\begin{align*}
\extlupprev{s}(f) 
\overset{\text{\ref{lower cuts}}}{=} \lim_{c\to-\infty} \extlupprev{s}(f^{\vee c})
= \lim_{c\to-\infty} \sum_{x \in \statespace{}} f^{\vee c}(x) p_s(x)
= \sum_{x \in \statespace{}} \lim_{c\to-\infty} f^{\vee c}(x) p_s(x)
&= \sum_{x \in \statespace{}}  p_s(x) \lim_{c\to-\infty} f^{\vee c}(x) \\
&= \sum_{x \in \statespace{}}  p_s(x) f(x)
= \upprev{s}(f),
\end{align*}
where the second equality follows from the fact that $f^{\vee c}\in\setofgenextvariablesb{}(\statespace{})$ for any $c\in\reals{}$ and, as we have just shown, $\extlupprev{s}$ and $\upprev{s}$ coincide on $\setofgenextvariablesb{}(\statespace{})$; 
where the third equality follows from the fact that \(\statespace{}\) is finite, our convention that $+\infty-\infty=-\infty+\infty=+\infty$ and that, for all \(x\in \statespace{}\), \(f^{\vee c}(x) p_s(x)\) is in $\reals{}\cup\{+\infty\}$ and non-decreasing in $c$; 
the fourth equality follows from the fact that \(p_s(x) \geq 0\) for all \(x \in \statespace{}\) and our convention that $0\cdot(-\infty)=0$; and the last equality follows from the definition of \(\upprev{s}\).
\end{proof}

Henceforth, for any precise probability tree $p$ and any $s\in\situations{}$, we will always let $\extlupprev{s}$  be this unique extension of the linear expectation $\lupprev{s}$ corresponding to the probability mass function $p(\cdot\vert s)$.
We will then say that a map $\martingale{}\colon\situations{}\to\extreals{}$ is a game-theoretic supermartingale \emph{with respect to a precise probability tree} $p$ if $\martingale{}$ is a game-theoretic supermartingale with respect to the local upper expectations $\extlupprev{s}$.

Furthermore, for any precise probability tree \(p \colon s \in \situations{} \mapsto p(\cdot \vert s)\), we let \(\mathrm{P} \colon \mathscr{F} \times \situations{} \to \reals{}\) be the corresponding conditional probability measure as discussed in Section~\ref{sect: Measure-theoretic expectations in precise probability trees}, where \(\mathscr{F}\) is the \(\sigma\)-algebra generated by all cylinder events.
Recall that, for any \(s \in \situations{}\), the map \(\mathrm{P}(\cdot \vert s) = \mathrm{P}_{\vert s}\) is then a probability measure on \(\mathscr{F}\).
This allows us to apply the concepts and results in \ref{subsect: Basic measure-theoretic concepts} here, by considering the probability space \((\Omega, \mathscr{F}, \mathrm{P}_{\vert s})\), for any \(s \in \situations{}\).
For notational convenience, we will let \(\mathrm{E}_{\vert s}(f)\) be the Lebesgue integral \(\smash{\int_\Omega f \dif \mathrm{P}_{\vert s}}\) with respect to the measure \(\mathrm{P}_{\vert s}\) for all \(\mathscr{F}\)-measurable \(f \in \smash{\setofextvariables{}}\) and all \(s \in \situations{}\) such that \(\smash{\int_\Omega f \dif \mathrm{P}_{\vert s}}\) exists.
So \(\smash{\mathrm{E}_{\vert s}(f)}\) is an alternative notation for \(\mathrm{E}_{\mathrm{meas},p}(f \vert s)\).
We introduce this notation because it allows us to write \(\mathrm{E}_{\vert s}(f \vert \mathscr{G})\) to denote a \(\mathscr{G}\)-measurable function representing the measure-theoretic expectation of \(f\) conditional on a \(\sigma\)-algebra \(\mathscr{G} \subseteq \mathscr{F}\), as defined in \ref{subsect: Basic measure-theoretic concepts}.
Furthermore, we equip the measurable space \((\Omega, \mathscr{F})\) with the filtration \(\{\mathscr{F}_n\}_{n \in \natz{}}\) where, for any \(n \in \natz{}\), \(\mathscr{F}_n\) is the \(\sigma\)-algebra generated by all cylinder events \(\Gamma(x_{1:n})\) where \(x_{1:n} \in \statespace{}^n\).
Note that, for any \(n \in \natz{}\), an \(\mathscr{F}_n\)-measurable function is then an \(n\)-measurable variable because the cylinder events \(\Gamma(x_{1:n})\) form the atoms of \(\mathscr{F}_n\) and \(\statespace{}\) is finite.
Hence, any measure-theoretic process \(\{\mathscr{N}_n\}_{n \in \natz{}}\) in \((\Omega, \mathscr{F}, \{\mathscr{F}_n\}_{n \in \natz{}})\) is a sequence of \(n\)-measurable variables.
This allows us to write \(\mathscr{N}_n(x_{1:n})\) for any \(n \in \natz{}\) and any \(x_{1:n} \in \statespace{}^n\) to mean the constant value of \(\mathscr{N}_n\) on all \(\omega \in \Gamma(x_{1:n})\).

\begin{lemma}\label{lemma: measure martingale to game martingale}
Consider any $x_{1:n}\in\situations{}$, any precise probability tree $p$ and let \/ $\mathrm{P}$ be the corresponding conditional probability measure.
Then, for any non-negative measure-theoretic martingale \(\{\mathscr{N}_i\}_{i \in \natz{}}\) in the filtered probability space \((\Omega,\mathscr{F},\{\mathscr{F}_m\}_{m \in \natz{}},\mathrm{P}_{\vert x_{1:n}})\), there is a non-negative game-theoretic (super)martingale\footnote{As shown in the proof, the inequality \(\smash{\martingale{}(z_{1:m}) \geq \extlupprev{z_{1:m}}(\martingale{}(z_{1:m} \cdot))}\) is actually an equality, for all \(z_{1:m} \in \situations{}\).
Combining this with the non-negativity of $\martingale{}$ and the characterisation of the upper expectations $\smash{\extlupprev{z_{1:m}}}$ that we established in Lemma~\ref{lemma: extend precise local models to bounded belows}, it can then be deduced that $-\martingale{}$ is also a game-theoretic supermartingale.
In this case, we can therefore actually call \(\martingale{}\) a \emph{game-theoretic martingale}.}
\(\martingale{}\) with respect to the tree $p$ such that \/ \(\liminf \martingale{} \geq_{x_{1:n}} \liminf_{i \to +\infty} \mathscr{N}_i\) and moreover \(\martingale{}(x_{1:n}) = \mathscr{N}_0(\Box)\). 
\end{lemma}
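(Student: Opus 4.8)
The plan is to exploit that, in this finite-state discrete-time setting, a measure-theoretic martingale relative to the filtration $\{\mathscr{F}_m\}_{m\in\natz{}}$ is — once the null cylinders have been dealt with — essentially already a game-theoretic martingale on the subtree below $x_{1:n}$: for any cylinder $\Gamma(z_{1:m})$ of positive $\mathrm{P}_{\vert x_{1:n}}$-probability, restricting the almost-sure identity $\mathrm{E}_{\vert x_{1:n}}(\mathscr{N}_{i+1}\vert\mathscr{F}_i)=\mathscr{N}_i$ (with $i=m$) to the $\mathscr{F}_m$-atom $\Gamma(z_{1:m})$ and using the product rule~\eqref{Eq: precise probability on algebra} yields $\sum_{y\in\statespace{}}\mathscr{N}_{m+1}(z_{1:m}y)\,p(y\vert z_{1:m})=\mathscr{N}_m(z_{1:m})$, and by Lemma~\ref{lemma: extend precise local models to bounded belows} the left-hand side is exactly $\extlupprev{z_{1:m}}(\mathscr{N}_{m+1}(z_{1:m}\cdot))$. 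Accordingly I would define, for each situation $t=z_{1:m}\in\situations{}$ — writing $\mathscr{N}_m(t)$ for the constant value of the $\mathscr{F}_m$-measurable, hence $m$-measurable, variable $\mathscr{N}_m$ on $\Gamma(t)$ —
\begin{equation*}
\martingale{}(t)\coloneqq
\begin{cases}
\mathscr{N}_m(t) & \text{if } \Gamma(t)\subseteq\Gamma(x_{1:n}) \text{ and } \mathrm{P}_{\vert x_{1:n}}(\Gamma(t))>0,\\
+\infty & \text{if } \Gamma(t)\subseteq\Gamma(x_{1:n}) \text{ and } \mathrm{P}_{\vert x_{1:n}}(\Gamma(t))=0,\\
\mathscr{N}_0(\Box) & \text{otherwise.}
\end{cases}
\end{equation*}
Since $\{\mathscr{N}_i\}_{i\in\natz{}}$ is non-negative and $\mathscr{N}_0(\Box)\in\nnegreals{}$ (a martingale being real-valued), $\martingale{}\geq 0$, so $\martingale{}\in\setofextsupmartb{}$ once the supermartingale inequality is verified.

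Then I would verify the three conclusions. Supermartingality $\extlupprev{t}(\martingale{}(t\cdot))\leq\martingale{}(t)$ — in fact an equality — is checked case by case from $\extlupprev{t}(g)=\sum_{y\in\statespace{}}g(y)\,p(y\vert t)$ (Lemma~\ref{lemma: extend precise local models to bounded belows}) and the conventions $0\cdot(+\infty)=0$ and $a\cdot(+\infty)=+\infty$ for $a>0$: if $\Gamma(t)\subseteq\Gamma(x_{1:n})$ and $\mathrm{P}_{\vert x_{1:n}}(\Gamma(t))>0$, each child $z_{1:m}y$ again has $\Gamma(z_{1:m}y)\subseteq\Gamma(x_{1:n})$ and positive probability exactly when $p(y\vert z_{1:m})>0$, the $+\infty$-children carry weight $0$, so $\extlupprev{t}(\martingale{}(t\cdot))=\sum_{y}\mathscr{N}_{m+1}(z_{1:m}y)\,p(y\vert z_{1:m})=\mathscr{N}_m(t)=\martingale{}(t)$ by the localized martingale identity above; if $\Gamma(t)\subseteq\Gamma(x_{1:n})$ and $\mathrm{P}_{\vert x_{1:n}}(\Gamma(t))=0$, every child has probability $0$ hence value $+\infty$, and since some $p(y\vert t)>0$ we get $\extlupprev{t}(\martingale{}(t\cdot))=+\infty=\martingale{}(t)$; otherwise $t$ is an ancestor of $x_{1:n}$ or unrelated to it, every child then has value $\mathscr{N}_0(\Box)$ — using $\mathscr{N}_n(x_{1:n})=\mathscr{N}_0(\Box)$ in the single case where a child equals $x_{1:n}$ — whence $\extlupprev{t}(\martingale{}(t\cdot))=\mathscr{N}_0(\Box)=\martingale{}(t)$. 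For the value at $x_{1:n}$: $\mathrm{P}_{\vert x_{1:n}}(\Gamma(x_{1:n}))=1$, so $\martingale{}(x_{1:n})=\mathscr{N}_n(x_{1:n})$, and $\mathscr{N}_n(x_{1:n})=\mathscr{N}_0(\Box)$ because $\mathscr{N}_m$ equals the constant $\mathscr{N}_m(x_{1:m})$ $\mathrm{P}_{\vert x_{1:n}}$-almost surely and $\mathrm{P}_{\vert x_{1:n}}(\Gamma(x_{1:m}))=1$ for all $m\leq n$, so that the law of iterated expectations [\ref{prop measure: law of iterated}] together with [\ref{prop measure: conditional to unconditional}], [\ref{prop measure: equality}] and [\ref{prop measure: linearity}] gives $\mathscr{N}_m(x_{1:m})=\mathrm{E}_{\vert x_{1:n}}(\mathscr{N}_m)=\mathrm{E}_{\vert x_{1:n}}(\mathscr{N}_0)=\mathscr{N}_0(\Box)$ (this is also the identity invoked in case three). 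Finally, for $\omega\in\Gamma(x_{1:n})$ and any $m\geq n$ we have $\Gamma(\omega^m)\subseteq\Gamma(x_{1:n})$, so $\martingale{}(\omega^m)\in\{\mathscr{N}_m(\omega^m),+\infty\}$ and hence $\martingale{}(\omega^m)\geq\mathscr{N}_m(\omega^m)=\mathscr{N}_m(\omega)$; taking $\liminf$ over $m$ gives $(\liminf\martingale{})(\omega)\geq(\liminf_{i\to+\infty}\mathscr{N}_i)(\omega)$, i.e.\ $\liminf\martingale{}\geq_{x_{1:n}}\liminf_{i\to+\infty}\mathscr{N}_i$.

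I expect the only genuinely delicate point to be the cylinders of $\mathrm{P}_{\vert x_{1:n}}$-probability zero: there the martingale identity holds merely $\mathrm{P}_{\vert x_{1:n}}$-almost surely and so gives no control over the values of the $\mathscr{N}_m$, which is why those values get overwritten by $+\infty$; the convention $0\cdot(+\infty)=0$ is then exactly what makes the supermartingale (indeed martingale) inequality pass through at such situations, and — since these situations have $\mathrm{P}_{\vert x_{1:n}}$-probability zero and lie below $x_{1:n}$ — replacing $\mathscr{N}_m$ there by a larger value leaves the $\liminf$ bound on $\Gamma(x_{1:n})$ intact. Everything else is routine: the atom-wise localization of the conditional-expectation identity via~\eqref{Eq: precise probability on algebra}, and the split of $\situations{}$ into situations that lie below, above, or off $\Gamma(x_{1:n})$, together with the observation that the children of any such situation fall into a compatible case.
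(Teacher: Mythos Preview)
Your proposal is correct and follows essentially the same approach as the paper: the construction of $\martingale{}$ is identical (the paper uses $\mathscr{N}_n(x_{1:n})$ rather than $\mathscr{N}_0(\Box)$ in the ``otherwise'' branch, but both proofs establish $\mathscr{N}_n(x_{1:n})=\mathscr{N}_0(\Box)$, so this is purely cosmetic), and the case-by-case verification of the supermartingale equality, the value at $x_{1:n}$, and the $\liminf$ bound all proceed along the same lines. The only minor difference is that you invoke \ref{prop measure: linearity}--\ref{prop measure: law of iterated} to obtain $\mathscr{N}_m(x_{1:m})=\mathscr{N}_0(\Box)$ for $m\leq n$, whereas the paper derives this step directly from the integrated martingale identity~\eqref{Eq: lemma: measure martingale to game martingale} specialised to $z_{1:m}=x_{1:m}$; both arguments are equally valid.
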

\begin{proof}
Fix any non-negative measure-theoretic martingale \(\{\mathscr{N}_i\}_{i \in \natz{}}\) and let \(\martingale{} \colon \situations{} \to \extreals{}\) be defined by 
\begin{align*}
\martingale{}(z_{1:m}) \coloneqq
\begin{cases}
\mathscr{N}_m(z_{1:m}) &\text{ if } n\leq m, \, z_{1:n}=x_{1:n} \text{ and } \mathrm{P}_{\vert x_{1:n}}(z_{1:m}) > 0; \\
+\infty &\text{ if } n\leq m, \, z_{1:n}=x_{1:n} \text{ and } \mathrm{P}_{\vert x_{1:n}}(z_{1:m}) = 0; \\
\mathscr{N}_n(x_{1:n}) &\text{otherwise, }
\end{cases}
\quad \text{ for all } z_{1:m} \in \situations{}.
\end{align*}
We show that \(\martingale{}\) is a non-negative game-theoretic supermartingale such that \(\martingale{}(x_{1:n}) = \mathscr{N}_0(\Box)\) and \(\liminf \martingale{} \geq_{x_{1:n}} \liminf_{i \to +\infty} \mathscr{N}_i\). 

Let us first show that \(\martingale{}(z_{1:m}) \geq \extlupprev{z_{1:m}}(\martingale{}(z_{1:m} \cdot))\) for all \(z_{1:m} \in \situations{}\) and therefore, that $\martingale{}$ is a game-theoretic supermartingale.
Recall that, because \(\{\mathscr{N}_i\}_{i \in \natz{}}\) is a non-negative measure-theoretic martingale, we have that \(\{\mathscr{N}_i\}_{i \in \natz{}}\) is a sequence of non-negative real ${i\text{-measurable}}$ variables such that \(\mathrm{E}_{\vert x_{1:n}}(\mathscr{N}_{i+1} \vert \mathscr{F}_{i}) = \mathscr{N}_{i}\) \(\mathrm{P}_{\vert x_{1:n}}\)-almost surely for all \(i \in \natz{}\) (note that the considered expectations exist because \(\{\mathscr{N}_i\}_{i \in \natz{}}\) is non-negative).
For all \(i \in \natz{}\) and all \(A \in \mathscr{F}_i\), since \(\smash{\mathrm{E}_{\vert x_{1:n}}(\mathscr{N}_{i+1}\vert\mathscr{F}_{i})}\) and \(\mathscr{N}_{i}\) only differ on a \(\mathrm{P}_{\vert x_{1:n}}\)-null set, we have that \(\smash{\int_{A}\mathrm{E}_{\vert x_{1:n}}(\mathscr{N}_{i+1} \vert \mathscr{F}_{i}) \dif \mathrm{P}_{\vert x_{1:n}}} = \int_{A} \mathscr{N}_{i} \dif \mathrm{P}_{\vert x_{1:n}}\) because of \ref{prop measure: equality}.
In particular, this implies that 
\begin{equation*}
\smash{\int_{\Gamma(z_{1:m})}\mathrm{E}_{\vert x_{1:n}}(\mathscr{N}_{m+1}\vert\mathscr{F}_{m}) \dif \mathrm{P}_{\vert x_{1:n}}} = \int_{\Gamma(z_{1:m})} \mathscr{N}_{m} \dif \mathrm{P}_{\vert x_{1:n}} \text{ for any } m\in\natz \text{ and any } z_{1:m}\in\statespace{}^{m}.
\end{equation*}
Moreover, since \(\Gamma(z_{1:m}) \in \mathscr{F}_m\), it follows from the definition of the measure-theoretic conditional expectation that 
\(\int_{\Gamma(z_{1:m})} \mathscr{N}_{m+1} \dif \mathrm{P}_{\vert x_{1:n}} 
= \int_{\Gamma(z_{1:m})} \mathrm{E}_{\vert x_{1:n}}(\mathscr{N}_{m+1}\vert\mathscr{F}_{m}) \, \dif\mathrm{P}_{\vert x_{1:n}}\).
Combining both equalities, we find that
\begin{align*}
\int_{\Gamma(z_{1:m})} \hspace{-8pt}\mathscr{N}_{m+1} \dif \mathrm{P}_{\vert x_{1:n}} 
= \int_{\Gamma(z_{1:m})} \hspace{-8pt}\mathscr{N}_{m} \dif \mathrm{P}_{\vert x_{1:n}}  
= \mathscr{N}_{m}(z_{1:m}) \int_{\Gamma(z_{1:m})} \hspace{-9pt} \dif \mathrm{P}_{\vert x_{1:n}} 
&= \mathscr{N}_{m}(z_{1:m}) \mathrm{P}_{\vert x_{1:n}}(z_{1:m}),  
\end{align*}
where the second equality follows from \ref{prop measure: linearity} and the fact that \(\mathscr{N}_{m}\) is constant and real on the cylinder event \(\Gamma(z_{1:m})\).
Moreover, because \(\mathscr{N}_{m+1}\) is \((m+1)\)-measurable and real-valued, the term on the left hand side of the equation above reduces to the finite sum $\sum_{z_{m+1} \in \statespace{}} \mathscr{N}_{m+1}(z_{1:m+1}) \mathrm{P}_{\vert x_{1:n}}(z_{1:m+1})$, allowing us to conclude that
\begin{align}\label{Eq: lemma: measure martingale to game martingale}
 \mathscr{N}_{m}(z_{1:m}) \mathrm{P}_{\vert x_{1:n}}(z_{1:m}) 
 = \sum_{z_{m+1} \in \statespace{}} \mathscr{N}_{m+1}(z_{1:m+1}) \mathrm{P}_{\vert x_{1:n}}(z_{1:m+1}) \text{ for any } m\in\natz \text{ and any } z_{1:m}\in\statespace{}^m.  
\end{align}

Consider now first any $m\geq n$ and any $z_{1:m}\in\statespace{}^m$ such that $z_{1:n}=x_{1:n}$ and $\mathrm{P}_{\vert x_{1:n}}(z_{1:m})>0$. It then follows from the definition of $\martingale{}$ that
\begin{align*}
\martingale{}(z_{1:m}) \mathrm{P}_{\vert x_{1:n}}(z_{1:m})
=  \mathscr{N}_{m}(z_{1:m}) \mathrm{P}_{\vert x_{1:n}}(z_{1:m}) 
&\overset{\text{\eqref{Eq: lemma: measure martingale to game martingale}}}{=} 
\sum_{z_{m+1} \in \statespace{}} \mathscr{N}_{m+1}(z_{1:m+1}) \mathrm{P}_{\vert x_{1:n}}(z_{1:m+1}) \\
&\hspace{4pt} = 
\sum_{z_{m+1} \in \statespace{}} \martingale{}(z_{1:m+1}) \mathrm{P}_{\vert x_{1:n}}(z_{1:m+1}) \\
&\hspace{4pt} = \sum_{z_{m+1} \in \statespace{}} \martingale{}(z_{1:m+1}) \mathrm{P}_{\vert x_{1:n}}(z_{1:m}) p(z_{m+1} \vert z_{1:m}) \\
&\hspace{4pt} = \mathrm{P}_{\vert x_{1:n}}(z_{1:m}) \sum_{z_{m+1} \in \statespace{}} \martingale{}(z_{1:m+1})  p(z_{m+1} \vert z_{1:m})
= \mathrm{P}_{\vert x_{1:n}}(z_{1:m}) \, \extlupprev{z_{1:m}}(\martingale{}(z_{1:m} \cdot)),
\end{align*}
where the third equality follows because \(\mathscr{N}_{m+1}(z_{1:m+1})\) only differs from \(\martingale{}(z_{1:m+1})\) if \(\mathrm{P}_{\vert x_{1:n}}(z_{1:m+1}) = 0\) [because $n \leq m+1$ and $z_{1:n}=x_{1:n}$] and our convention that $0 \cdot (+\infty) = 0$, where the fourth equality follows from Equation~\eqref{Eq: precise probability on algebra} together with the fact that $n < m+1$ and $z_{1:n}=x_{1:n}$, and where the last equality follows from the expression for $\extlupprev{z_{1:m}}$ that we established in Lemma~\ref{lemma: extend precise local models to bounded belows}.
Recall that \(\mathrm{P}_{\vert x_{1:n}}(z_{1:m}) > 0\), so we can devide both sides by \(\mathrm{P}_{\vert x_{1:n}}(z_{1:m})\) to obtain that \(\martingale{}(z_{1:m}) = \smash{\extlupprev{z_{1:m}}(\martingale{}(z_{1:m} \cdot))}\).
As a consequence, the condition that \(\martingale{}(z_{1:m}) \geq \smash{\lupprev{z_{1:m}}(\martingale{}(z_{1:m} \cdot))}\) is satisfied for all \(z_{1:m} \in \situations{}\) such that $m\geq n$, $z_{1:n}=x_{1:n}$ and $\mathrm{P}_{\vert x_{1:n}}(z_{1:m})>0$.

Second, consider any \(z_{1:m} \in \situations{}\) such that $m\geq n$, $z_{1:n}=x_{1:n}$ and $\mathrm{P}_{\vert x_{1:n}}(z_{1:m})=0$.
Then we also have that \(\mathrm{P}_{\vert x_{1:n}}(z_{1:m+1}) = 0\) for any \(z_{m+1} \in \statespace{}\) because $\Gamma(z_{1:m+1}) \subset \Gamma(z_{1:m})$, and it therefore follows from the definition of \(\martingale{}\) that \(\martingale{}(z_{1:m}) = \martingale{}(z_{1:m+1}) = +\infty\) for all \(z_{m+1}\in\statespace{}\).
Hence, using Lemma~\ref{lemma: extend precise local models to bounded belows}, we find that
\begin{equation*}
\extlupprev{z_{1:m}}(\martingale{}(z_{1:m}\cdot))
= \sum_{z_{m+1}\in\statespace{}} \martingale{}(z_{1:m+1})  p(z_{m+1} \vert z_{1:m}) = +\infty = \martingale{}(z_{1:m}),
\end{equation*}
which establishes that indeed \(\martingale{}(z_{1:m}) \geq \extlupprev{z_{1:m}}(\martingale{}(z_{1:m} \cdot))\).
So we have that \(\martingale{}(z_{1:m}) \geq \smash{\extlupprev{z_{1:m}}(\martingale{}(z_{1:m} \cdot))}\) for all \(z_{1:m} \in \situations{}\) such that $m\geq n$ and $z_{1:n}=x_{1:n}$.

Thirdly, consider any other situation \(z_{1:m} \in \situations{}\).
That is, consider any \(z_{1:m} \in \situations{}\) such that $n>m$ or $z_{1:n}\neq x_{1:n}$.
It then follows from the definition of $\martingale{}$ that $\martingale{}(z_{1:m})=\mathscr{N}_n(x_{1:n})$ and, as we will now show, that also $\martingale{}(z_{1:m+1})=\mathscr{N}_n(x_{1:n})$ for all $z_{m+1}\in\statespace{}$.
Our assumption about $z_{1:m}$ tells us that there are two cases: either $n\leq m$ and $z_{1:n}\not= x_{1:n}$, either $n>m$. 
If $n\leq m$ and $z_{1:n}\not= x_{1:n}$, then also $n\leq m+1$ and therefore, due to the definition of $\martingale{}$, we find that $\martingale{}(z_{1:m+1})=\mathscr{N}_n(x_{1:n})$ for all $z_{m+1}\in\statespace{}$. 
If $n> m$, then either $n>m+1$ or $n=m+1$.
If $n>m+1$ or, equivalently, $n\not\leq m+1$, then it is once more clear from the definition of $\martingale{}$ that $\martingale{}(z_{1:m+1})=\mathscr{N}_n(x_{1:n})$ for all $z_{m+1}\in\statespace{}$. 
If $n=m+1$, then for any $z_{m+1}\in\statespace{}$, we either have that $z_{1:m+1} = x_{1:n}$ or that $z_{1:m+1} \not= x_{1:n}$.
In the latter case, it is clear, again due to the definition of $\martingale{}$, that $\martingale{}(z_{1:m+1})=\mathscr{N}_n(x_{1:n})$.
Otherwise, so if $z_{1:m+1} = x_{1:n}$, the definition of $\martingale{}$ also implies that $\martingale{}(z_{1:m+1})=\martingale{}(x_{1:n})=\mathscr{N}_n(x_{1:n})$ because $\mathrm{P}_{\vert x_{1:n}}(x_{1:n})=1>0$ according to Equation~\eqref{Eq: precise probability on algebra}.  
We conclude that, indeed, $\martingale{}(z_{1:m})=\mathscr{N}_n(x_{1:n})$ and $\martingale{}(z_{1:m+1})=\mathscr{N}_n(x_{1:n})$ for all $z_{m+1}\in\statespace{}$. 
It now follows trivially from Lemma~\ref{lemma: extend precise local models to bounded belows} that \(\martingale{}(z_{1:m}) = \smash{\extlupprev{z_{1:m}}(\martingale{}(z_{1:m} \cdot))}\), and therefore definitely that \(\martingale{}(z_{1:m}) \geq \smash{\extlupprev{z_{1:m}}(\martingale{}(z_{1:m} \cdot))}\).

So we have proved that \(\martingale{}(z_{1:m}) \geq \smash{\extlupprev{z_{1:m}}(\martingale{}(z_{1:m} \cdot))}\) for all \(z_{1:m} \in \situations{}\) and therefore, that $\martingale{}$ is indeed a game-theoretic supermartingale with respect to the tree~$p$.
Moreover, \(\martingale{}\) is non-negative because \(\{\mathscr{N}_i\}_{i \in \natz{}}\) is non-negative.
That \(\liminf_{i \to +\infty} \martingale{}(\omega^i) \geq \liminf_{i \to +\infty} \mathscr{N}_i(\omega)\) holds for all \(\omega \in \Gamma(x_{1:n})\), follows immediately from the fact that \(\martingale{}(\omega^i) \geq \mathscr{N}_i(\omega^i) = \mathscr{N}_i(\omega)\) for any \(\omega \in \Gamma(x_{1:n})\) and any $i \geq n$, where the inequality is simply implied by the definition of $\martingale{}$ together with the fact that $\omega^n = x_{1:n}$ and $i \geq n$.
It still remains to show that $\martingale{}(x_{1:n})=\mathscr{N}_0(\Box)$.

We only need to prove that $\mathscr{N}_0(\Box) = \mathscr{N}_n(x_{1:n})$, since the desired equality then trivially follows from the definition of $\martingale{}$ and the fact that $\mathrm{P}_{\vert x_{1:n}}(x_{1:n})=1>0$ due to Equation~\eqref{Eq: precise probability on algebra}.
To do so, consider any $z_{1:m}\in\situations{}$ such that $m<n$ and $z_{1:m}=x_{1:m}$.
Note that, by Equation~\eqref{Eq: precise probability on algebra}, we then have that $\mathrm{P}_{\vert x_{1:n}}(z_{1:m})=1$.
Moreover, it should also be clear from Equation~\eqref{Eq: precise probability on algebra} that $\mathrm{P}_{\vert x_{1:n}}(z_{1:m+1})=1$ if $z_{m+1}=x_{m+1}$ [because then $z_{1:m+1} = x_{1:m+1}$ and $m+1\leq n$] and that $\mathrm{P}_{\vert x_{1:n}}(z_{1:m+1})=0$ otherwise [because then $z_{1:m+1} \not= x_{1:m+1}$].
Hence, plugging this back into \eqref{Eq: lemma: measure martingale to game martingale}, we find that $\mathscr{N}_{m}(z_{1:m}) = \mathscr{N}_{m+1}(z_{1:m}x_{m+1})$.  
Since this holds for any $z_{1:m}\in\situations{}$ such that $m<n$ and $z_{1:m}=x_{1:m}$, it follows that $\mathscr{N}_{m}(x_{1:m}) = \mathscr{N}_{m+1}(x_{1:m+1})$ for all $m<n$. This clearly implies that $\mathscr{N}_{0}(\Box) = \mathscr{N}_{1}(x_1) = \cdots = \mathscr{N}_{n}(x_{1:n})$.
\end{proof}

In the following proof, we will write, for any two $f,g\in\setofextvariables{}$, any $s\in\situations{}$ and any (unconditional) probability measure $\mathrm{P}'$ on $\mathscr{F}$, that $f =_s g$ $\mathrm{P}$-almost surely---and similarly for $\geq_s$ and $\leq_s$---if the event $\{\omega\in\Gamma(s) \colon f(\omega)\not= g(\omega)\}$ is $\mathrm{P}$-null.
Note that then $f =_s g$ $\mathrm{P}$-almost surely if $f = g$ $\mathrm{P}$-almost surely. 

\begin{proof}[Proof of Proposition~\ref{prop: unconditional precise measure is equal to game for bounded}]
Fix any \(\mathscr{F}\)-measurable \(f' \in \setofgambles{}\) and any $\sit\in\situations{}$.
It suffices to show that \(\smash{\mathrm{E}_{\mathrm{meas},p}(f' \vert \sit)} = \upprevvovkk{}(f' \vert \sit)\); 
the desired equality is then automatically implied by Theorem~\ref{theorem: Vovk is the largest}.
First observe that, because \(f'\) is bounded and \(\mathscr{F}\)-measurable, \(\mathrm{E}_{\mathrm{meas},p}(f' \vert \sit) = \mathrm{E}_{\vert\sit}(f')\) exists.
We will now prove that \(\mathrm{E}_{\mathrm{meas},p}(f \vert \sit) = \upprevvovkk{}(f \vert \sit)\) for the non-negative $\mathscr{F}$-measurable gamble $f\coloneqq f' - \inf f'$ (the variable $f$ is indeed a gamble because $f'$ is a gamble and therefore $\inf f' \in\reals$), which then implies that \(\mathrm{E}_{\mathrm{meas},p}(f' \vert \sit) = \upprevvovkk{}(f' \vert \sit)\) because \(\mathrm{E}_{\mathrm{meas},p}(\cdot\vert\sit)\) and \(\smash{\upprevvovkk{}}\) both satisfy the constant additivity property; see \ref{prop measure: linearity} and \ref{vovk coherence 6}.

We first show that \(\smash{\upprevvovkk{}(f\vert\sit) \leq \mathrm{E}_{\mathrm{meas},p}(f \vert \sit)}\).
To do so, we will prove that there is some \(c \in \reals{}\) such that, for all \(\epsilon>0\), there is a bounded below game-theoretic supermartingale \(\martingale{}_\epsilon\) with respect to the tree $p$ such that \(\martingale{}_\epsilon(\sit) = \mathrm{E}_{\mathrm{meas},p}(f \vert \sit) + \epsilon c\) and \(\liminf \martingale{}_\epsilon \geq_{\sit} f\).
Indeed, the desired inequality then follows immediately from the definition of $\upprevvovkk{}$.

Consider the filtered probability space \((\Omega,\mathscr{F},\{\mathscr{F}_m\}_{m \in \natz{}},\mathrm{P}_{\vert x_{1:n}})\) and the corresponding measure-theoretic expectation \(\mathrm{E}_{\vert x_{1:n}} \hspace*{-2pt} = \mathrm{E}_{\mathrm{meas},p}(\cdot \vert x_{1:n})\).
Since \(f\) is bounded and \(\mathscr{F}\)-measurable, it is surely \(\mathrm{P}_{\vert x_{1:n}}\hspace*{-2pt}\)-integrable (the Lebesgue integral of a bounded $\mathscr{F}$-measurable function is real-valued), and therefore, by Proposition~\ref{Prop: Levy measure-theoretic} in \ref{subsect: Basic measure-theoretic concepts}, we have that
\begin{align*}
\lim_{m \to +\infty} \mathrm{E}_{\vert x_{1:n}} (f \vert \mathscr{F}_m) = \mathrm{E}_{\vert x_{1:n}} (f \vert \mathscr{F}_\infty) \, \text{\normalfont \, \(\mathrm{P}_{\vert x_{1:n}}\)\hspace*{-2pt}-almost surely}.
\end{align*}
Note that \(\mathscr{F}_\infty = \sigma(\cup_{m \in \natz{}} \mathscr{F}_m) = \mathscr{F}\) is the smallest \(\sigma\)-algebra generated by all cylinder events, which, by \ref{prop measure: condition on sigma-algebra}, implies that \(\mathrm{E}_{\vert x_{1:n}} (f \vert \mathscr{F}_\infty) = f\), \(\mathrm{P}_{\vert x_{1:n}}\)-almost surely. 
Hence, since the intersection of two \(\mathrm{P}_{\vert x_{1:n}}\)\hspace*{-2pt}-almost sure events is itself also \(\mathrm{P}_{\vert x_{1:n}}\)\hspace*{-2pt}-almost sure, we have that \(\lim_{m \to +\infty} \mathrm{E}_{\vert x_{1:n}} (f \vert \mathscr{F}_m) = f\) \(\mathrm{P}_{\vert x_{1:n}}\)\hspace*{-2pt}-almost surely.
Moreover, note that the conditional expectations \(\mathrm{E}_{\vert x_{1:n}} (f \vert \mathscr{F}_m)\) can be chosen in such a way that the sequence \(\{\mathrm{E}_{\vert x_{1:n}} (f \vert \mathscr{F}_m)\}_{m \in \natz{}}\) is a non-negative measure-theoretic martingale in the filtered probability space \((\Omega,\mathscr{F},\{\mathscr{F}_m\}_{m \in \natz{}},\mathrm{P}_{\vert x_{1:n}})\).
Indeed, each \(\mathrm{E}_{\vert x_{1:n}} (f \vert \mathscr{F}_m)\) is real-valued and non-negative \(\mathrm{P}_{\vert x_{1:n}}\)\hspace*{-2pt}-almost surely because of \ref{prop measure: bounds} and the fact that \(f\) is bounded and non-negative. So, since the value of the measure-theoretic conditional expectation can be chosen arbitrarily on a null set, \(\mathrm{E}_{\vert x_{1:n}} (f \vert \mathscr{F}_m)\) can be chosen such that it is real-valued and non-negative everywhere.
Moreover, because of \ref{prop measure: law of iterated}, we have that
\(\smash{\mathrm{E}_{\vert x_{1:n}} \bigl( \mathrm{E}_{\vert x_{1:n}} (f \vert \mathscr{F}_{m+1}) \vert \mathscr{F}_m \bigr) 
 = \mathrm{E}_{\vert x_{1:n}} (f \vert \mathscr{F}_m)}\) \(\mathrm{P}_{\vert x_{1:n}}\)\hspace*{-2pt}-almost surely for all \(m \in \natz{}\), where $\mathrm{E}_{\vert x_{1:n}} \bigl( \mathrm{E}_{\vert x_{1:n}} (f \vert \mathscr{F}_{m+1}) \vert \mathscr{F}_m \bigr)$ exists because $\mathrm{E}_{\vert x_{1:n}} (f \vert \mathscr{F}_{m+1})$ is $\mathscr{F}$-measurable and non-negative.
So let \(\{\mathrm{E}_{\vert x_{1:n}} (f \vert \mathscr{F}_m)\}_{m \in \natz{}}\) be a version of the conditional expectations that forms a non-negative measure-theoretic martingale.
We can then use Lemma~\ref{lemma: measure martingale to game martingale} to infer the existence of a non-negative game-theoretic supermartingale \(\martingale{}_0\) with respect to the tree $p$ such that \(\liminf \martingale{}_0 \geq_{x_{1:n}} \liminf_{m \to +\infty} \mathrm{E}_{\vert x_{1:n}} (f \vert \mathscr{F}_m)\) and \(\martingale{}_0(x_{1:n}) = \mathrm{E}_{\vert x_{1:n}} (f \vert \mathscr{F}_0)(\Box)\). 
Then, because \(\lim_{m \to +\infty} \mathrm{E}_{\vert x_{1:n}} (f \vert \mathscr{F}_m) = f\) \(\mathrm{P}_{\vert x_{1:n}}\)\hspace*{-2pt}-almost surely and therefore also \(\lim_{m \to +\infty} \mathrm{E}_{\vert x_{1:n}} (f \vert \mathscr{F}_m) =_{x_{1:n}} \hspace*{-2pt}f\) \(\mathrm{P}_{\vert x_{1:n}}\)\hspace*{-2pt}-almost surely, we have that \(\liminf \martingale{}_0 \geq_{x_{1:n}} f\) \(\mathrm{P}_{\vert x_{1:n}}\)\hspace*{-2pt}-almost surely. 
Moreover, we have that 
$\martingale{}_0(x_{1:n}) = \mathrm{E}_{\vert x_{1:n}} (f \vert \mathscr{F}_0)(\Box) = \mathrm{E}_{\vert x_{1:n}} (f)$, due to the fact that \(\mathscr{F}_0 = \{\emptyset,\Omega\}\) and property~\ref{prop measure: conditional to unconditional}.
So, we can conclude that \(\martingale{}_0\) is a non-negative game-theoretic supermartingale with respect to $p$ such that \(\martingale{}_0(x_{1:n}) = \mathrm{E}_{\vert x_{1:n}}(f)= \mathrm{E}_{\mathrm{meas},p}(f\vert x_{1:n})\) and \(\liminf \martingale{}_0 \geq_{x_{1:n}} f\) \(\mathrm{P}_{\vert x_{1:n}}\)-almost surely.

As our final step towards obtaining $\martingale{}_\epsilon$, consider Proposition~\ref{Prop: Ville} and note that it ensures that there is a non-negative measure-theoretic supermartingale \(\{\mathscr{N}_m\}_{m \in \natz{}}\) in \((\Omega,\mathscr{F},\{\mathscr{F}_m\}_{m \in \natz{}},\mathrm{P}_{\vert x_{1:n}})\) that converges to \(+\infty\) on all paths \(\omega \in \Gamma(x_{1:n})\) such that \(\liminf \martingale{}_0(\omega) < f(\omega)\). 
Indeed, the set of all such paths \(\omega\) has probability zero because \(\liminf \martingale{}_0 \geq_{x_{1:n}} \hspace*{-2pt}f\) \(\mathrm{P}_{\vert x_{1:n}}\)-almost surely.
Let $c\coloneqq\mathscr{N}_0(\Box)$, which is real-valued because \(\{\mathscr{N}_m\}_{m \in \natz{}}\) is a measure-theoretic supermartingale.
By Lemma~\ref{lemma: measure martingale to game martingale}, we find that there is a non-negative game-theoretic supermartingale \(\martingale{}'\) with respect to $p$ such that $\liminf\martingale{}' \geq_{x_{1:n}} \liminf_{m\to+\infty}\mathscr{N}_m$ and $\martingale{}'(x_{1:n}) = \mathscr{N}_0(\Box) = c$.
Since \(\{\mathscr{N}_m\}_{m \in \natz{}}\) converges to $+\infty$ on all paths \(\omega\in\Gamma(x_{1:n})\) such that \(\liminf \martingale{}_0(\omega) < f(\omega)\), it follows that $\martingale{}'$ also converges to $+\infty$ on all such paths \(\omega\).
Consider now any \(\epsilon > 0\) and let \(\martingale{}_\epsilon\) be the process defined by \(\martingale{}_\epsilon(s) \coloneqq \martingale{}_0(s) + \epsilon \martingale{}'(s)\) for all \(s \in \situations{}\).
Then \(\martingale{}_\epsilon\) is clearly non-negative---and therefore bounded below---and it is a game-theoretic supermartingale because of \cite[Lemma~12]{Tjoens2020FoundationsARXIV}.\footnote{Alternatively, instead of using \cite[Lemma~12]{Tjoens2020FoundationsARXIV}, one could also easily deduce this using the alternative expression for the local models \(\extlupprev{s}\) that we established in Lemma~\ref{lemma: extend precise local models to bounded belows}.}
Furthermore, note that \(\liminf \martingale{}_\epsilon(\omega) \geq f(\omega)\) for all \(\omega \in \Gamma(x_{1:n})\).
Indeed, if \(\liminf \martingale{}_0(\omega) \geq f(\omega)\) for some \(\omega \in \Gamma(x_{1:n})\), then also \(\liminf \martingale{}_\epsilon(\omega) \geq f(\omega)\) because \(\epsilon\) and \(\martingale{}'\) are non-negative.
If \(\liminf \martingale{}_0(\omega) < f(\omega)\) for some \(\omega \in \Gamma(x_{1:n})\), then \(\martingale{}'\), and therefore also \(\epsilon \martingale{}'\), converges to \(+\infty\), which, together with the non-negativity of \(\martingale{}_0\), implies that \(\martingale{}_\epsilon\) converges to \(+\infty\) on \(\omega\).
Hence, also in this case, we have that \(\liminf \martingale{}_\epsilon(\omega) \geq f(\omega)\) so we can conclude that \(\liminf \martingale{}_\epsilon \geq_{x_{1:n}} f\).
Moreover, recall that \(\martingale{}'(x_{1:n}) = c \in\reals{}\) and that \(\martingale{}_0(x_{1:n}) = \mathrm{E}_{\vert x_{1:n}}(f)\), so we have that \(\martingale{}_\epsilon(x_{1:n}) = \martingale{}_0(x_{1:n}) + \epsilon \martingale{}' (x_{1:n}) = \mathrm{E}_{\vert x_{1:n}}(f) + \epsilon c = \mathrm{E}_{\mathrm{meas},p}(f\vert x_{1:n})+ \epsilon c\).
Hence, $\martingale{}_\epsilon$ satisfies all the desired conditions and we conclude that indeed \(\smash{\upprevvovkk{}(f\vert\sit) \leq \mathrm{E}_{\mathrm{meas},p}(f \vert \sit)}\).

Then we are left to show the remaining inequality \(\upprevvovkk{}(f \vert \sit) \geq \mathrm{E}_{\mathrm{meas},p}(f \vert \sit)\).
However, this can be easily deduced from the already obtained inequality and the self-conjugacy of \(\mathrm{E}_{\mathrm{meas},p}\).
Indeed, \(\mathrm{E}_{\vert \sit}(-f)\) exists because \(-f\) is \(\mathscr{F}\)-measurable and bounded, so we can apply \ref{prop measure: linearity} to find that \(\mathrm{E}_{\mathrm{meas},p}(f \vert \sit) = \mathrm{E}_{\vert \sit}(f) 
= - \mathrm{E}_{\vert \sit}(- f)
= - \mathrm{E}_{\mathrm{meas},p}(- f \vert \sit)\).
Since we have already shown that \(\smash{\upprevvovkk{}(g \vert \sit) \leq \mathrm{E}_{\mathrm{meas},p}(g \vert \sit)}\) for all $\mathscr{F}$-measurable \(g \in \setofgambles{}\), we have in particular that \(\smash{\upprevvovkk{} (-f \vert \sit) \leq \mathrm{E}_{\mathrm{meas},p}(-f \vert \sit)}\), which implies that \(\smash{\mathrm{E}_{\mathrm{meas},p}(f \vert \sit)} = \smash{- \mathrm{E}_{\mathrm{meas},p}(- f \vert \sit)} \leq \smash{- \upprevvovkk{} (-f \vert \sit)} = \smash{ \lowprevvovkk{} (f \vert \sit)} \leq \smash{\upprevvovkk{}(f \vert \sit)}\), where the last inequality follows from \ref{vovk coherence 1} and the fact that $\upprev{\mathrm{A}}=\upprevvovkk{}$---and therefore also $\lowprev{}_{\hspace*{1pt}\raisebox{-1pt}{\scriptsize$\mathrm{A}$}}=\lowprevvovkk{}$---because of Theorem~\ref{theorem: Vovk is the largest}.
\end{proof}

\end{document}